\documentclass[a4paper,11pt]{article}

\usepackage{amssymb,amsmath,amsthm,times,mathrsfs,fullpage}
\usepackage{tikz,multirow}
\usetikzlibrary{shadows}
\usetikzlibrary{scopes,shapes,snakes,arrows}
\usepackage[pdftex]{hyperref}
\usepackage[skip= 0pt,textfont= it]{caption}
\hypersetup{plainpages= True,
	pdfstartview= FitH,
	bookmarksopen= true,
	pdfpagemode= none,
	colorlinks= true,
	linkcolor= blue,
	citecolor= blue}
\def\pf{\noindent \emph{Proof.}\ }
\def\qed{{\quad\rule{1mm}{3mm}\,}}
\def\dd#1{\,\mathrm{d}{#1}}
\def\le{\leqslant}
\def\ge{\geqslant}
\def\JS{\mathscr{J\!\!S}}
\def\ve{\varepsilon}
\def\tr#1{\lfloor{#1}\rfloor}
\def\lpa#1{\bigl({#1}\bigr)}
\def\Lpa#1{\Bigl({#1}\Bigr)}
\def\LLpa#1{\Biggl({#1}\Biggr)}
\def\llpa#1{\biggl({#1}\biggr)}

\begin{document}

\newtheorem{thm}{Theorem}
\newtheorem{cor}{Corollary}
\newtheorem{lmm}{Lemma}
\newtheorem{conj}{Conjecture}
\newtheorem{prop}{Proposition}
\newtheorem{Def}{Definition}
\theoremstyle{remark}\newtheorem{Rem}{Remark}

\graphicspath{{./dst_profile_figs/}}

\title{\textbf{Node Profiles of Symmetric Digital Search
Trees: Concentration Properties}}
\author{Michael Drmota\\
    Institute for Discrete Mathematics and Geometry\\
    Vienna University of Technology\\
    1040 Vienna\\
    Austria \and
Michael Fuchs\thanks{Partially supported by MOST under the grants
MOST-104-2923-M-009-006-MY3 and MOST-109-2115-M-004-003-MY2.}\\
    Department of Applied Mathematics\\
    National Chiao Tung University\\
    Hsinchu 300\\
    Taiwan \and
Hsien-Kuei Hwang\thanks{Partially supported by an Investigator Award
from Academia Sinica under the Grant AS-IA-104-M03.}\\
    Institute of Statistical Science\\
    Academia Sinica\\
    Taipei 115\\
    Taiwan  \and
Ralph Neininger\\
    Institute for Mathematics\\
    Goethe University\\
    60054 Frankfurt a.M.\\
    Germany}
\date{\today}
\maketitle

\begin{abstract}

We give a detailed asymptotic analysis of the profiles of random
symmetric digital search trees, which are in close connection with
the performance of the search complexity of random queries in such
trees. While the expected profiles have been analyzed for several
decades, the analysis of the variance turns out to be very difficult
and challenging, and requires the combination of several different
analytic techniques, including Mellin and Laplace transforms,
analytic de-Poissonization, and Laplace convolutions. Our results
imply concentration of the profiles in the range where the mean tends
to infinity. Moreover, we also obtain a two-point concentration for
the distributions of the height and the saturation level.

\end{abstract}

\noindent \emph{AMS 2010 subject classifications.} Primary
05A16,60C05, 68Q25; secondary 68P05, 60F05.\\

\emph{Key words.} Digital search tree, level profile, two-point
concentration, double-indexed recurrence, asymptotic transfer,
Poissonization, Laplace transform, Mellin transform.

\section{Introduction and Results}\label{intro}

Digital trees are fundamental data structures for words or alphabets
in computer algorithms whose analysis has attracted much attention
over the last half century. One major such variant is the
\emph{digital search tree} (DST for short), introduced by Coffman and
Eve in 1970 \cite{CoEv} (see also \cite{K} for more information).
Such structures are closely related to the popular Lempel-Ziv
compression scheme, and their asymptotic behaviors under random
inputs are often more challenging than those for other digital tree
families because of the natural occurrence of differential-functional
equations instead of purely algebraic-functional equations.

We begin with the definition of DST, which is the main object of
study in this paper. In the simplest situation, it is built from
digital data consisting of a sequence of records in the form of
$0$-$1$ strings. The first record is stored at the root of the tree.
All other records are distributed to the left- or right-subtree
according as their first bit being $0$ or $1$, respectively, and
retain their relative order. The subtrees of the root are then built
according to the same rules but by using the $j$th digit at level $j$
in further directing the strings to their respective subtrees. The
splitting process stops when the size of the subtree is either zero
or one. Note that the resulting tree is a binary tree with internal
nodes holding the records. External nodes, which represent places
where potential records can be inserted, are often added to the tree
(in fact, two external nodes are automatically created in the
algorithmic implementation for each new internal node); see Figure
\ref{dst-fig} for an example of a DST built from five records
(internal nodes are represented by rectangles and external nodes by
circles).

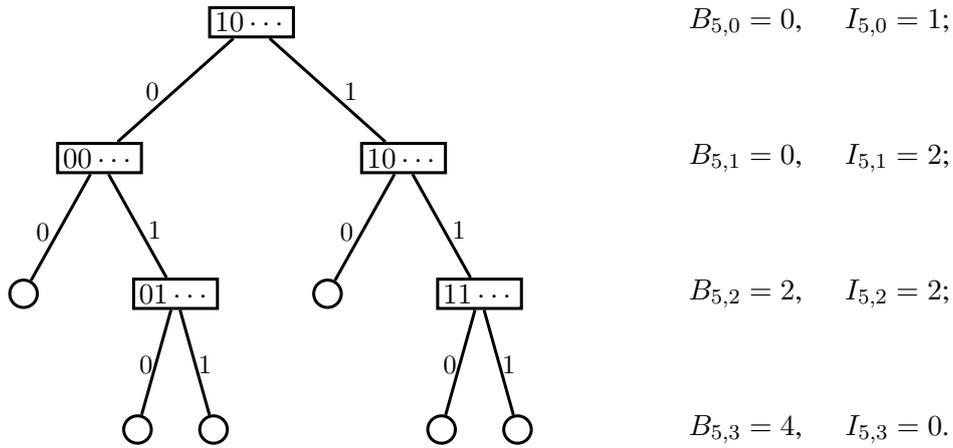
\begin{figure}[h]
\vspace*{0.4cm}
\begin{center}
\begin{tikzpicture}[line width= 0.4mm]
\draw (0cm,0cm) node[inner sep= 2pt,rectangle, draw] (1) {$10\cdots$};
\draw (-2cm,-1.8cm) node[inner sep= 2pt, rectangle, draw] (2)
{$00\cdots$};
\draw (2cm,-1.8cm) node[inner sep= 2pt, rectangle, draw] (3)
{$10\cdots$};
\draw (-3cm,-3.6cm) node[circle, draw] (4) {};
\draw (-1cm,-3.6cm) node[inner sep= 2pt, rectangle, draw] (5)
{$01\cdots$};
\draw (1cm,-3.6cm) node[circle, draw] (6) {};
\draw (3cm,-3.6cm) node[inner sep= 2pt, rectangle, draw] (7)
{$11\cdots$};
\draw (-1.5cm,-5.4cm) node[circle, draw] (8) {};
\draw (-0.5cm,-5.4cm) node[circle, draw] (9) {};
\draw (2.5cm,-5.4cm) node[circle, draw] (10) {};
\draw (3.5cm,-5.4cm) node[circle, draw] (11) {};

\draw (1)--(2) node[pos= 0.7,above] {\small{$0$}}; \draw (1)--(3)
node[pos= 0.7,above] {\small{$1$}};
\draw (2)--(4) node[pos= 0.8, above= 0.1cm] {\small{$0$}}; \draw
(2)--(5) node[pos= 0.8,above= 0.1cm] {\small{$1$}};
\draw (3)--(6) node[pos= 0.8,above= 0.1cm] {\small{$0$}}; \draw
(3)--(7) node[pos= 0.8,above= 0.1cm] {\small{$1$}};
\draw (5)--(8) node[pos= 0.85,above= 0.2cm] {\small{$0$}}; \draw
(5)--(9) node[pos= 0.85,above= 0.2cm] {\small{$1$}};
\draw (7)--(10) node[pos= 0.85,above= 0.2cm] {\small{$0$}}; \draw
(7)--(11) node[pos= 0.85,above= 0.2cm] {\small{$1$}};

\draw (6.5cm,0cm) node {$B_{5,0}= 0$,}; \draw (8.5cm,0cm) node
{$I_{5,0}= 1$;};
\draw (6.5cm,-1.8cm) node {$B_{5,1}= 0$,}; \draw (8.5cm,-1.8cm) node
{$I_{5,1}= 2$;};
\draw (6.5cm,-3.6cm) node {$B_{5,2}= 2$,}; \draw (8.5cm,-3.6cm) node
{$I_{5,2}= 2$;};
\draw (6.5cm,-5.4cm) node {$B_{5,3}= 4$,}; \draw (8.5cm,-5.4cm) node
{$I_{5,3}= 0$.};
\end{tikzpicture}
\end{center}
\caption{A DST built from $5$ records with its profiles.}
\label{dst-fig}
\end{figure}

For the purpose of analysis, we assume that bits in the input strings
are independent and identically distributed with a common
Bernoulli$(p)$ random variable with $0<p<1$. Throughout this paper,
we fix $p= \frac12$, namely, we consider only the symmetric case.
This random model is called the \emph{symmetric Bernoulli model} and
the corresponding random tree is referred to as a \emph{random
symmetric DST}. It represents a simple model with reasonably good
predictive power in general (for example, results holding in the
Bernoulli model often have similar forms in more general Markov
models; see \cite{LouSpaTang}).

Under such a random model, we study in this paper the external and
internal node profiles (referred to as the profiles for short) which
are defined as follows: the external profile of a random symmetric
DST of size $n$ is a double-indexed sequence of random variables
$B_{n,k}$ which counts the number of external nodes at (horizontal)
level $k$; the internal profile $I_{n,k}$ is similarly defined (with
external nodes replaced by internal nodes).

Profiles are \emph{fine shape characteristics} encoding the level
silhouette of the tree and they are closely connected to many other
shape parameters such as height, width, total path length, saturation
or fill-up level, and successful and unsuccessful search. In
particular, we will discuss the unsuccessful search (or the depth),
the height and the saturation level:

\begin{itemize}

\item Unsuccessful search $U_n$: the distance from the root to a
randomly chosen external node with its distribution given by
\begin{equation}\label{dis-un}
    \mathbb{P}(U_n= k)
	= \frac{\mathbb{E}(B_{n,k})}{n+1}.
\end{equation}

\item Height: the length of the longest path from the root to
an external node, or $\max\{k: B_{n,k}>0\}$;

\item Saturation level: the last level from the root which is
completely filled with internal nodes, or $\max\{k: I_{n,k}= 2^k\}$.

\end{itemize}
See for example \cite{Dr,HwFuZa} and the references therein for more
shape parameters in DSTs.

Historically, the external profile was among the very first shape
parameters analyzed on DSTs due to the connection \eqref{dis-un} to
the unsuccessful search; see Knuth \cite{K} and Konheim and Newman
\cite{KoNe}. Yet the understanding of the profiles of symmetric DSTs
has remained incomplete. Table~\ref{tab:3trees} summarizes the
current status for the profiles of tries, Patricia tries and DSTs,
the latter two representing other major classes of digital trees.

\def\arraystretch{1.3}
\begin{table}[!ht]
\begin{center}
\begin{tabular}{ccccc}
  Tree types & $p= q$? & Mean & Variance & CLT \\ \hline
  Tries & $0<p<1$ & \cite{PaHwNiSz} & \cite{PaHwNiSz} &
  \cite{PaHwNiSz} \\ \cline{2-5}
  \multirow{2}{*}{Patricia Tries} & $p= \frac12$
  & \cite{MaKnSz} & ? & ? \\
  & $p\ne\frac12$
  & \cite{DrMaSz,MaKnSz,MaSz} & \cite{DrMaSz,MaSz} & \cite{MaSz}
  \\ \cline{2-5}
  \multirow{2}{*}{DSTs} & $p= \frac12$
  & \cite{Lo,DrSz} & this paper & next paper \\
  & $p\ne\frac12$
  & \cite{DrSz}& \cite{KaVa} & ?\\ \hline
\end{tabular}
\end{center}
\vspace*{-.2cm}
\caption{\emph{A summary of the analysis in distribution of profiles
in the three major classes of digital trees under the Bernoulli
model.}}
\label{tab:3trees}
\end{table}

Briefly, in the case of random tries, the mean, the variance and the
asymptotic normality of both profiles under the symmetric and
asymmetric models are fully clarified in \cite{PaHwNiSz}. For
Patricia tries, the expected profiles were studied in \cite{MaKnSz}
for both symmetric and asymmetric models. Then the asymptotic
variance and the asymptotic normality of the profiles, \emph{inter
alia}, under the asymmetric model are established in the recent papers
\cite{DrMaSz,MaSz}.

As regards symmetric DSTs, Louchard \cite{Lo}, following
\cite{K,KoNe}, derived an explicit expression for the expected
profiles; see also \cite{Dr, DrSz,Ma,Pr}. Louchard also obtained an
asymptotic approximation for the mean profiles in the most important
range $k= \log_2n+\mathcal{O}(1)$ (where most nodes lie),
characterizing the asymptotic distribution of unsuccessful and
successful search. These results were later extended in \cite{Dr,
DrSz, KnSz, Ma}. We broaden the study in this paper to the variance
of the profiles for which an arduous analysis is carried out. From
our results we obtain concentration of the profiles around the expected profile in
the range where the mean becomes unbounded. This suggests that an
asymptotic normality result (in the sense of convergence in
distribution) holds in this range, too. We will deal with this
question in a companion paper due to very different techniques
required. Moreover, we will apply our results to the height and the
saturation level. See also \cite{De2,JacSpabook,LouSpaTang,Spabook}
for other parameters and different types of results on profiles in
DSTs.

We now state our results, focusing on the external profile. The
corresponding results for the internal profile will be given in
Section \ref{int-profile}. First, we introduce the following function
and sequence that are ubiquitous in the analysis of DSTs; see
\cite{K}.
\[
    Q(z)
    = \prod_{\ell\ge1}\lpa{1-2^{-\ell}z}
    \qquad\text{and}\qquad
    Q_n
    = \prod_{1\le \ell\le n}\lpa{1-2^{-\ell}}
    = \frac{Q(1)}{Q(2^{-n})}.
\]
Note that $\lim_{n\to\infty}Q_n$ exists and equals
$Q(1)=:Q_\infty$.

In the next section, we will derive the following (largely known) result
(see \cite{Dr}) for the mean of the external profile.
\begin{thm}\label{thm-mean}
The mean of the external profile satisfies
\begin{equation}\label{EXnk}
    \mathbb{E}(B_{n,k})
    = 2^kF\lpa{2^{-k}n}
	  +\mathcal{O}(1),
\end{equation}
uniformly for $0\le k\le n$, where $F(x)$ is a positive function on
$\mathbb{R}^+$ defined by
\begin{align}\label{F-sum}
    F(x)
    = \sum_{j\ge 0}\frac{(-1)^j2^{-\binom{j}{2}}}
    {Q_jQ_{\infty}}\,e^{-2^jx}.
\end{align}
\end{thm}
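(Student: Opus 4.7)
The plan is to combine Poissonization, an exact solution of the resulting functional equation via classical $q$-series identities, and analytic de-Poissonization. Conditioning on the first record sitting at the root and the remaining $n-1$ records splitting left/right independently via fair coins, the means $a_{n,k}:=\mathbb{E}(B_{n,k})$ satisfy
\[
a_{n,k}=\frac{2}{2^{n-1}}\sum_{j=0}^{n-1}\binom{n-1}{j}a_{j,k-1}\qquad(n,k\ge 1),
\]
with $a_{0,0}=1$ and all other boundary values zero. Introducing the Poisson transform $\tilde{B}_k(z):=e^{-z}\sum_{n\ge 0}a_{n,k}z^n/n!$ converts this binomial convolution into the clean differential-functional equation
\[
\tilde{B}_k(z)+\tilde{B}_k'(z)=2\tilde{B}_{k-1}(z/2),\qquad \tilde{B}_0(z)=e^{-z}.
\]

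Equivalently, $F_k(z):=e^z\tilde{B}_k(z)$ satisfies $F_k'(z)=2e^{z/2}F_{k-1}(z/2)$ with $F_0\equiv 1$, and a short induction shows that $F_k$ is a linear combination of the exponentials $\{e^{(1-2^{-j})z}:0\le j\le k\}$. Matching coefficients in the recurrence and enforcing $F_k(0)=0$ for $k\ge 1$ yields
\[
\tilde{B}_k(z)=\sum_{j=0}^{k}\frac{2^j}{Q_j}\,b_{k-j}\,e^{-2^{-j}z},
\]
where $\sum_{k\ge 0}b_k u^k$ is the reciprocal of $\sum_{j\ge 0}(2u)^j/Q_j$. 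The two classical Euler $q$-identities with $q=\tfrac12$ give $\sum_j(2u)^j/Q_j=1/\prod_{i\ge 0}(1-2^{1-i}u)$ and $\prod_{i\ge 0}(1-2^{-i}u)=\sum_j(-u)^j 2^{-\binom{j}{2}}/Q_j$; together with $Q_k=Q_{k-1}(1-2^{-k})$ and $\binom{k}{2}-\binom{k-1}{2}=k-1$ these telescope to the clean closed form $b_k=(-1)^k 2^{k-\binom{k}{2}}/Q_k$. Substituting $i=k-j$ yields the exact identity
\[
\tilde{B}_k(z)=2^k\sum_{i=0}^{k}\frac{(-1)^i 2^{-\binom{i}{2}}}{Q_iQ_{k-i}}\,e^{-2^i z/2^k},
\]
which differs from $2^kF(z/2^k)$ only in that $Q_{k-i}$ replaces $Q_\infty$ and the sum is truncated at $i=k$. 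Using $|1/Q_{k-i}-1/Q_\infty|=O(2^{-(k-i)})$ on the head and the super-exponential weight $2^{-\binom{i}{2}}$ on the tail gives the uniform estimate $\tilde{B}_k(z)=2^kF(z/2^k)+O(1)$ for all $z\ge 0$ and $k\ge 0$.

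The final step is de-Poissonization. I would apply the analytic de-Poissonization lemma of Jacquet and Szpankowski, for which one must verify polynomial growth of $\tilde{B}_k(z)$ in a complex cone about $\mathbb{R}_+$ and exponential decay outside it. The main technical obstacle is that these growth estimates must hold \emph{uniformly in $k$} throughout the whole range $0\le k\le n$; this uniformity can be established directly from the explicit formula above, where the super-exponential weights $2^{-\binom{i}{2}}$ and the uniform boundedness of $1/(Q_iQ_{k-i})$ make every bound independent of $k$. Transferring gives $a_{n,k}=\tilde{B}_k(n)+O(1)=2^kF(n/2^k)+O(1)$ uniformly for $0\le k\le n$, as claimed.
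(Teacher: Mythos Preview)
Your proof follows essentially the same architecture as the paper's: Poissonize, solve exactly, show $\tilde{B}_k(z)=2^kF(2^{-k}z)+\mathcal{O}(1)$, then de-Poissonize. The paper solves the functional equation via Laplace transform rather than your exponential ansatz plus Euler identities, but both routes land on the same closed form. For the key estimate $\tilde{B}_k(z)=2^kF(2^{-k}z)+\mathcal{O}(1)$, the paper takes a slightly different path: it proves the exact identity
\[
\tilde{B}_k(z)=2^k\sum_{m\ge 0}\frac{2^{-\binom{m+1}{2}-km}}{Q_m}\,F^{(m)}\bigl(2^{-k}z\bigr)
\]
(obtained by expanding $Q_\infty/Q_{k-i}$ via Euler's identity) and then bounds the $m\ge 1$ terms using $\sup_{\Re(z)\ge 0}|F^{(m)}(z)|=\mathcal{O}\bigl(2^{\binom{m+1}{2}}\bigr)$. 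Your direct termwise comparison using $|1/Q_{k-i}-1/Q_\infty|=\mathcal{O}(2^{-(k-i)})$ is more economical and equally valid; the paper's identity, however, is reused later in the finer expansion needed for the height analysis.

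One genuine omission: the theorem asserts that $F$ is \emph{positive} on $\mathbb{R}^+$, and you do not address this. It does not follow from the series definition (the signs alternate). The paper proves it separately by first noting $F(x)\ge 0$ from $\mu_{n,k}\ge 0$ and the approximation, then deriving the functional equation $F(x)+F'(x)=2F(2x)$ from the Laplace transform and arguing by contradiction that a zero at $x_0$ would force zeros at $2x_0,4x_0,\ldots$, contradicting $F(x)\sim e^{-x}/Q_\infty>0$ for large $x$. You should add this argument or an equivalent one.
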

The proof of \eqref{EXnk} for small $k$, or more precisely, for $k$
such that $2^{-k}n\to\infty$, will follow readily by simple
elementary arguments, whereas that for the remaining range will rely
on complex-analytic tools. More precisely, when $2^{-k}n\to\infty$
and $k\ge1$, we will show that
\begin{equation}\label{asymp-mean-simp}
    \mathbb{E}(B_{n,k})
    = \frac{2^k}{Q_k}\lpa{1-2^{-k}}^n
    \lpa{1+\mathcal{O}
	\lpa{e^{-\frac{n}{2^k-1}}}},
\end{equation}
which is stronger than \eqref{EXnk} if $2^kF\lpa{2^{-k}n}
= \mathcal{O}(1)$.
\begin{Rem}
Note that \eqref{asymp-mean-simp} indeed holds for all $n$ and $k$
but is more useful in the range when $2^{-k}n\to\infty$.
\end{Rem}
\begin{figure}[ht]
\begin{center}
\includegraphics[height= 5cm,width= 6cm]{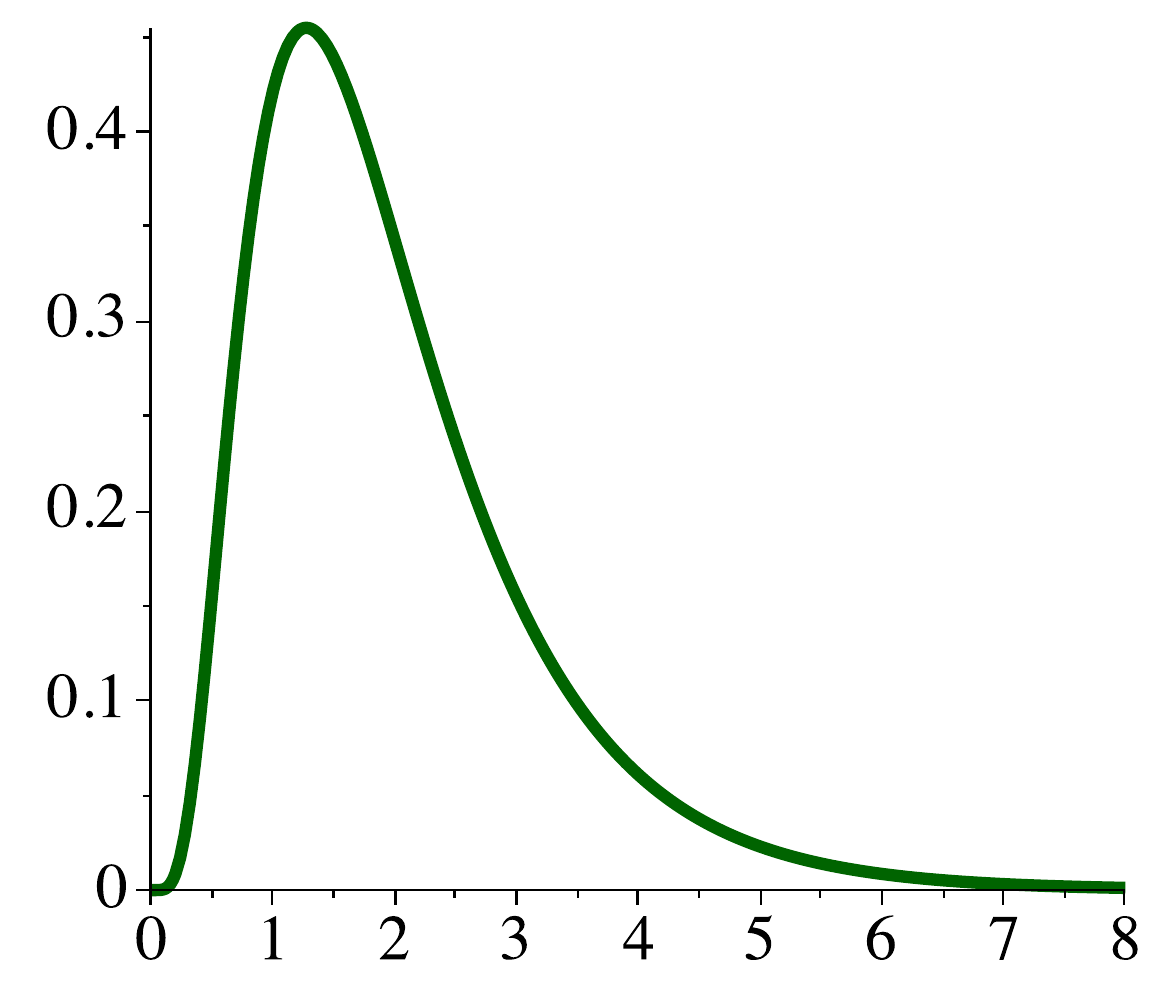}
\quad
\includegraphics[height= 5cm,width= 6cm]{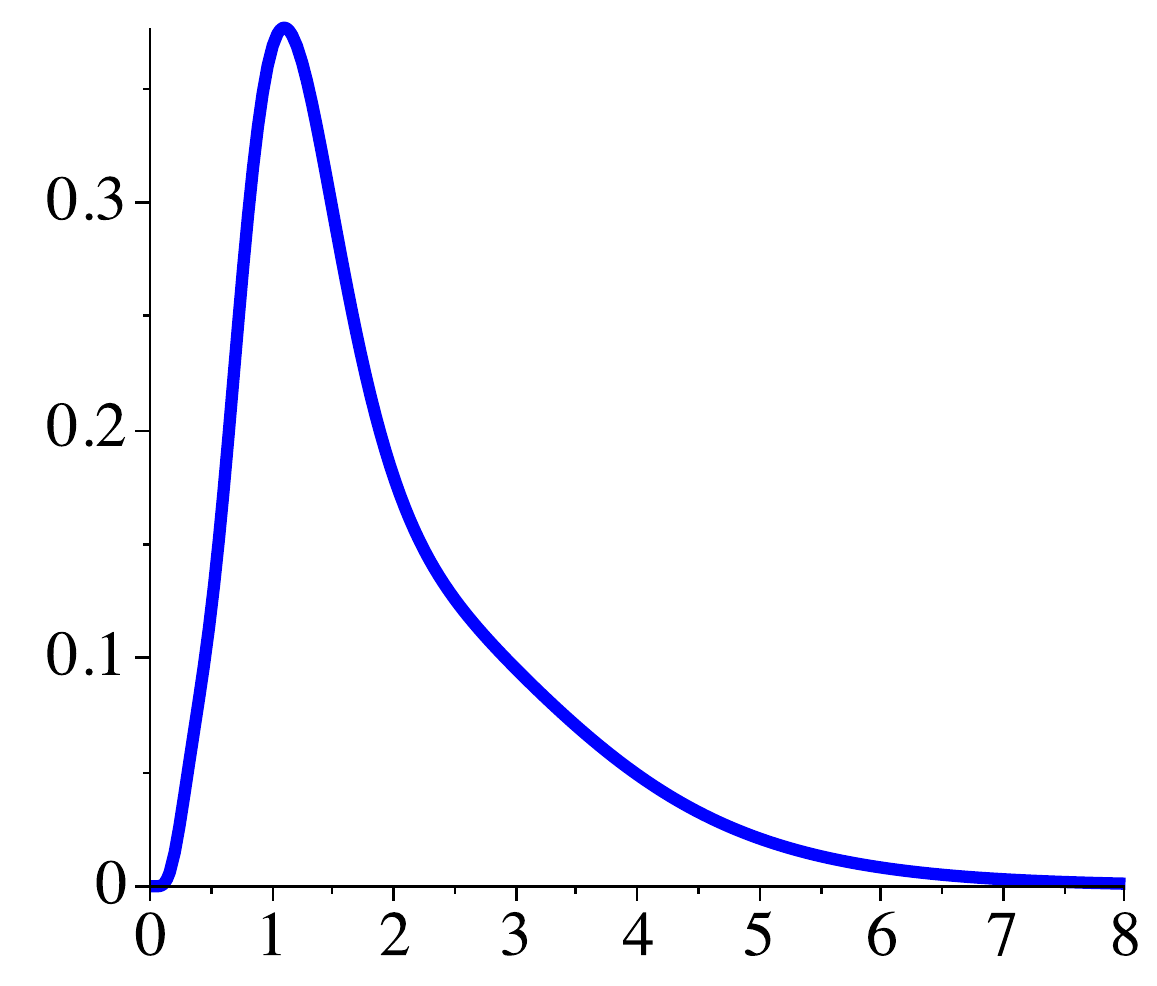}
\end{center}
\vspace*{-0.45cm}
\caption{\emph{The functions $F$ (left) and $G$ (right).}}
\label{plot-F}
\end{figure}

On the other hand, the relation \eqref{EXnk} is only a (useful)
asymptotic approximation if the first term on the right-hand side is
not bounded for large $n$. Thus, to understand when this holds, we
derive more precise asymptotic approximations of $F(x)$ for large and
small $x$; see Figure~\ref{plot-F} (left) for a graphical rendering
of $F$.

Observe first that the series definition \eqref{F-sum} of $F$ extends
to complex parameter $z$ with $\Re(z)\ge 0$ and is itself an
asymptotic expansion for large $|z|$:
\begin{equation}\label{Fx-large}
    F(z)
	= \frac{e^{-z}}{Q_{\infty}}
	    +\mathcal{O}\lpa{e^{-2\Re(z)}},
    \qquad(\Re(z)\ge 0).
\end{equation}
On the other hand, for small $x$ with $X := \frac1{x\log 2}$ (see
Proposition~\ref{Th5}),
\begin{align}\label{Fx-small}
    F(x)
    = \sqrt{\frac{\log 2}{2\pi}}(1+o(1))
	    X^{\frac12+\frac1{\log 2}}
	    \exp&\llpa{-\frac{(\log (X\log X))^2}{2\log 2}
	    -P(\log_2(X\log X))},
\end{align}
with $P(t)$, $t\in\mathbb{R}$, a $1$-periodic function whose Fourier
series is given explicitly by
\begin{align} \label{Pt}
    P(t)
	:= \frac{\log 2}{12}
		+\frac{\pi^2}{6\log 2}
	    -\sum_{j\ge1} \frac{\cos(2j\pi t)}
	    {j\sinh\lpa{\frac{2j\pi^2}{\log 2}}}.
\end{align}
Note that the series in \eqref{Pt}, representing the fluctuating part
of $P(t)$, $t\in\mathbb{R}$, has a (peak-to-peak) amplitude less than
$1.8\times 10^{-12}$. Both expansions \eqref{Fx-small} and \eqref{Pt}
can be extended to complex parameter; see Proposition~\ref{Th5}.

While it is well anticipated (from known results for tries and
Patricia tries) that $\mathrm{Var}(B_{n,k})$ is asymptotically of the
same form as \eqref{EXnk} for $\mathbb{E}(B_{n,k})$ in most ranges of
$k$ of interest, the function involved is surprisingly very
complicated, as shown in \eqref{G} below; see also Figure
\ref{plot-F} (right).

\begin{thm}\label{thm-var}
The variance of the external profile satisfies
\begin{align}\label{Var-Bnk}
    \mathrm{Var}(B_{n,k})
    = 2^kG\lpa{2^{-k}n}
		+\mathcal{O}(1),
\end{align}
uniformly for $0\le k\le n$, where $G(x)$ is a positive function on
$\mathbb{R}^+$ defined by
\begin{equation}\label{G}
	G(x)
	= \sum_{j,r\ge0}\sum_{0\le h,\ell\le j}
	\!\frac{(-1)^{r+h+\ell}
	2^{-j-\binom{r}{2}-\binom{h}{2}-\binom{\ell}{2}+2h+2\ell}}
	{Q_{\infty} Q_r Q_{h}Q_{j-h}
	Q_{\ell}Q_{j-\ell}}\,\varphi\lpa{2^{j+r},2^h+2^\ell;x},
\end{equation}
with
\begin{equation}\label{varphi}
	\varphi(u,v;x)
	 = e^{-vx}\int_{0}^{x}(x-t)e^{-(u-v)t}\dd{t}
	 = \begin{cases}
		\displaystyle
		e^{-vx}\frac{e^{-(u-v)x}-1+(u-v)x}{(u-v)^2},
		&\text{if}\ u\ne v;\\
		\frac12x^2e^{-ux},
		&\text{if}\ u= v.
	\end{cases}
\end{equation}
\end{thm}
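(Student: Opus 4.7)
The plan is to derive and analyze a recurrence for $V_{n,k}:=\mathrm{Var}(B_{n,k})$ via Poissonization, iteration of a differential-functional equation, and asymptotic transfer, in parallel with the proof of Theorem~\ref{thm-mean}. From the distributional recurrence
$$B_{n,k}\stackrel{d}{=}B'_{J,k-1}+B''_{n-1-J,k-1},\qquad J\sim\mathrm{Bin}\lpa{n-1,\tfrac12},$$
with $B'$ and $B''$ conditionally independent given $J$, the law of total variance gives
$$V_{n,k}=2^{2-n}\sum_{j=0}^{n-1}\binom{n-1}{j}V_{j,k-1}+T_{n,k},\qquad T_{n,k}:=\mathrm{Var}\bigl[\mu_{J,k-1}+\mu_{n-1-J,k-1}\bigr],$$
where $\mu_{n,k}:=\mathbb{E}(B_{n,k})$. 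The \emph{toll} $T_{n,k}$ is the new ingredient compared with the mean analysis: expanding its defining variance and plugging in the exact alternating-sum representation of $\mu_{n,k}$ underlying \eqref{F-sum} rewrites $T_{n,k}$ as a double sum whose two summation indices will eventually become $h$ and $\ell$ in \eqref{G}.

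Next I would Poissonize. Setting $\tilde V_k(z):=e^{-z}\sum_{n\ge 0}V_{n,k}z^n/n!$ and similarly $\tilde T_k(z)$, the recurrence above translates into the first-order differential-functional equation
$$\tilde V_k'(z)+\tilde V_k(z)=2\tilde V_{k-1}(z/2)+\tilde T_k(z),$$
standard in the DST literature. Variation of constants yields
$$\tilde V_k(z)=2\int_0^z e^{-(z-t)}\tilde V_{k-1}(t/2)\dd{t}+\int_0^z e^{-(z-t)}\tilde T_k(t)\dd{t},$$
and iterating the first term down the $k$-index until one reaches $\tilde V_0\equiv 0$ represents $\tilde V_k$ as a nested integral of the Poissonized tolls against exponential kernels. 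After the usual alternating-sum manipulations coming from the repeated dilations $z\mapsto z/2$ (which contribute the $(-1)^r/Q_r$ and $2^{-\binom{r}{2}}$ factors visible in \eqref{G}), the iteration index becomes a summation over one of the discrete indices in the definition of $G$.

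The nested integrals are then evaluated by substituting the explicit Poisson expansion of $\mu$, which (via the mean recurrence) writes $\tilde\mu_k(z)$ as a convergent sum of pure exponentials $e^{-\alpha z}$ with $\alpha=2^{h-k}$ weighted by $(-1)^h 2^{-\binom{h}{2}}/Q_h$. Expanding $T_{n,k}$ via this representation produces the two free indices $h,\ell$, each bounded by a third index $j$ that encodes the range of the alternating sum arising from writing $\mathrm{Var}[\mu_{J,k-1}+\mu_{n-1-J,k-1}]$ in closed form; this yields the condition $0\le h,\ell\le j$ and the denominators $Q_{j-h}Q_{j-\ell}$ in \eqref{G}. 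Each remaining integral is of the elementary form $e^{-ux}\int_0^x t\,e^{(u-v)t}\dd{t}$ with $u=2^{r+j}$ and $v=2^h+2^\ell$, which is precisely $\varphi(u,v;x)$ in \eqref{varphi}. Collecting the weights and rescaling $x=2^{-k}z$ reproduces the series \eqref{G} up to the overall factor $2^k$.

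Finally, one must de-Poissonize, transferring $\tilde V_k(z)$ at $z=n$ to $V_{n,k}$ with a uniform $O(1)$ error. I would do this by analytic de-Poissonization in the style of Jacquet and Szpankowski, which requires polynomial growth of $\tilde V_k(z)$ on a cone $|\arg z|\le\theta<\pi/2$ and exponential decay outside it, combined with the second-moment transfer identity for $V_{n,k}$. The \textbf{main obstacle} is to obtain the $O(1)$ error \emph{uniformly for all} $0\le k\le n$: the argument $2^{-k}n$ of $G$ ranges from exponentially large (where direct estimates analogous to \eqref{asymp-mean-simp} suffice) through $\Theta(1)$ (where the full $G$ is needed), down to exponentially small (where $G(x)$ itself grows super-polynomially and must be controlled by fine Mellin/saddle-point asymptotics of the flavor of \eqref{Fx-small}). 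Matching the estimates across these three regimes, while keeping the remainder genuinely at $O(1)$ rather than the naive $O(2^k\cdot\text{small})$, is the delicate analytic heart of the argument and accounts for the varied analytic toolkit announced in the abstract.
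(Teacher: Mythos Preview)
Your outline is in the right spirit but departs from the paper at a decisive point, and that departure leaves a real gap.

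You set $\tilde V_k(z):=e^{-z}\sum_n V_{n,k}z^n/n!$, the Poisson generating function of the variance sequence, and note $\tilde V_0\equiv 0$. The paper does \emph{not} use this object. It works instead with the \emph{Poissonized variance}
\[
    \tilde V_k(z)\;=\;\tilde M_{k,2}(z)-\tilde M_{k,1}(z)^2-z\,\tilde M_{k,1}'(z)^2,
\]
for which $\tilde V_0(z)=e^{-z}-(1+z)e^{-2z}\not\equiv 0$. The point of this specific combination is that it satisfies the same differential--functional recurrence but with the \emph{explicit} toll $z\bigl(\tilde M_{k,1}''(z)\bigr)^2$. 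Since $\tilde M_{j,1}(z)$ is a finite linear combination of exponentials $e^{-2^{h-j}z}$, squaring its second derivative produces exactly the double sum over $h,\ell\le j$ with the weight $2^{2h+2\ell}$ visible in the definition of $G$; the Laplace transform of the iterated recurrence then yields the kernel $\varphi(2^{r+j},2^h+2^\ell;x)$ directly. In your scheme the toll is the generating function of $T_{n,k}=\mathrm{Var}\bigl[\mu_{J,k-1}+\mu_{n-1-J,k-1}\bigr]$, which involves the PGF of $\mu_{j,k-1}^2$ and of $\mu_{j,k-1}\mu_{n-1-j,k-1}$ minus $\mu_{n,k}^2$; none of these is a clean product of known Poisson generating functions, so your claim that ``substituting the Poisson expansion of $\mu$'' into $T_{n,k}$ immediately reproduces the $(j,r,h,\ell)$ structure and the $\varphi$-integrals is not justified as written. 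Your account of which summation index comes from where (iteration versus toll expansion) is correspondingly muddled.

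The de-Poissonization step is also handled differently. The paper does not de-Poissonize $\tilde V_k$ directly; it establishes JS-admissibility of $\tilde M_{k,1}$ and $\tilde M_{k,2}$ separately, applies the Poisson--Charlier expansion to each up to order $j=3$, and then forms $\nu_{n,k}-\mu_{n,k}^2$, whereupon the correction $-z(\tilde M_{k,1}')^2$ emerges from the cross terms and one lands exactly on $\tilde V_k(n)+\mathcal O(1)$. Your plan to de-Poissonize the PGF of the variance is not wrong in principle, but it requires JS-admissibility of an object built from the PGF of $\mu_{n,k}^2$, which you do not address. Finally, you identify the uniform $\mathcal O(1)$ across regimes of $2^{-k}n$ as the main obstacle; in the paper that step is comparatively routine once the right $\tilde V_k$ is in hand, and the genuinely hard analysis is the small-$|z|$ behaviour of $G$ (needed for positivity and for the corollaries), which you do not mention.
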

\begin{Rem}
In the case when $2^{-k}n\to\infty$, we will in fact prove that
\[
    \mathbb{E}(B_{n,k})
	\sim \mathrm{Var}(B_{n,k}).
\]
\end{Rem}
Despite of its complicated form, the function $G$ is very close to $F$
in the following sense (see Section~\ref{var}):
\begin{equation}\label{asymp-rel-FG}
    G(x)\sim
	\begin{cases}
	    F(x), & \text{if } x\to\infty;\\
		2F(x),& \text{if }x\to 0;
	\end{cases}	
\end{equation}
see also \cite{PaHwNiSz} for the same type of results for symmetric
tries, and Devroye \cite{De} for a general bound for the profile
variance. If $x\to\infty$, then a more precise approximation is
\begin{equation}\label{Gx-large}
    G(x)
    = \frac{e^{-x}}{Q_{\infty}}
	    +\mathcal{O}\lpa{xe^{-2x}},
\end{equation}
where the second-order term differs from that of $F$; see
\eqref{Fx-large}.

The two theorems imply that the mean and the variance have very
similar behaviors. In particular, they tend to infinity in the same
range of $k$.
\begin{cor} \label{cor-iff}
For large $n$ and $0\le k\le n$, $\mathbb{E}(B_{n,k})\to\infty$ if
and only if $\,\mathrm{Var}(B_{n,k})\to\infty$. In fact,
$\mathbb{E}(B_{n,k})$ and $\,\mathrm{Var}(B_{n,k})$ are of the same
order of magnitude in this range.
\end{cor}

We now describe the range where the mean and the variance tend
to infinity. Define two functions of $n$:
\begin{equation}\label{kskh}
\begin{split}	
    k_s
	&:= \log_2n
		-\log_2\log n
		+\frac{\log_2\log n}
		{\log n},  \\
    k_h
	&:= \log_2n
		+ \sqrt{2\log_2n}
		-\frac12\log_2\log_2n
	    +1+\frac1{\log 2}
		-\frac{3\log\log n}
	    {4\sqrt{2(\log n)(\log 2)}}.
\end{split}
\end{equation}

\begin{cor}\label{cor-central-range}
The mean and the variance of $B_{n,k}$ tend to infinity if and only
if there exists a positive sequence $\omega_n$ tending to infinity
with $n$ such that
\begin{equation}\label{k0k1}
    k_s+\frac{\omega_n}{\log n}
	\le k
	\le k_h - \frac{\omega_n}{\sqrt{\log n}}.
\end{equation}
\end{cor}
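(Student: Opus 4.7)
The plan is to combine Corollary~\ref{cor-iff} with Theorem~\ref{thm-mean} and the two asymptotic expansions of $F$. By Corollary~\ref{cor-iff} the mean and variance diverge together, so it is enough to characterise when $\mathbb{E}(B_{n,k})\to\infty$, and by Theorem~\ref{thm-mean} this is equivalent to $2^kF(2^{-k}n)\to\infty$, since the remainder is only $O(1)$. Because $F$ has fundamentally different asymptotic behaviour for large and for small arguments (compare \eqref{Fx-large} with \eqref{Fx-small}), I would treat the lower boundary $k\approx k_s$ and the upper boundary $k\approx k_h$ separately.

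\emph{Lower boundary.} For $k$ near $k_s$ we have $k<\log_2n$, so $x:=2^{-k}n$ is large, of order $\log n$. Applying \eqref{Fx-large} and taking logarithms, the divergence condition reduces to $k\log 2-2^{-k}n\to\infty$. Writing $k=k_s+\delta$, expanding $2^{-k}n=2^{-\delta}\cdot 2^{-k_s}n$ around $\delta=0$, and computing $2^{-k_s}n$ to the required precision from the explicit formula for $k_s$, a routine Taylor expansion shows the condition is equivalent to $\delta\log n\to\infty$. This yields the left-hand inequality in \eqref{k0k1}. The $1/\log n$ width on this side reflects that the derivative in $k$ of $\log(2^kF(2^{-k}n))$ at $k=k_s$ is of order $\log n$.

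\emph{Upper boundary.} For $k$ near $k_h$, $x=2^{-k}n\to0$, and the small-$x$ expansion \eqref{Fx-small} must be invoked. Setting $X:=2^k/(n\log 2)=1/(x\log 2)$, so that $\log X=(k-\log_2n)\log 2-\log\log 2$, combining \eqref{Fx-small} with $k\log 2=\log n+\log X+\log\log 2$ and absorbing the $O(10^{-12})$ oscillations of $P$ into $O(1)$ yields
\[
\log\lpa{2^kF(2^{-k}n)}
=\log n-\frac{(\log X+\log\log X)^2}{2\log 2}
+\Lpa{\tfrac32+\tfrac1{\log 2}}\log X+O(1).
\]
The leading balance $\log n\sim(\log X)^2/(2\log 2)$ gives $\log X\sim\sqrt{2\log 2\log n}$, hence $k-\log_2n\sim\sqrt{2\log_2n}$. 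The sub-leading cross term $-(\log X)(\log\log X)/\log 2$ produces the $-\tfrac12\log_2\log_2n$ correction, the polynomial prefactor in \eqref{Fx-small} contributes the constant $1/\log 2$, and matching the next order of the expansion yields the $-\tfrac{3\log\log n}{4\sqrt{2\log n\log 2}}$ correction, thereby recovering $k_h$ from \eqref{xx}. Near $k_h$, the derivative in $k$ of $\log(2^kF(2^{-k}n))$ is of order $\sqrt{\log n}$, which explains the $\omega_n/\sqrt{\log n}$ width on this side.

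Between the two boundaries, $2^kF(2^{-k}n)$ is at least polynomially large in $n$, so divergence is automatic. The main obstacle is the upper-boundary analysis: because \eqref{Fx-small} is a double-logarithmic expansion with an oscillatory component, the various sub-leading terms of orders $\sqrt{\log_2n}$, $\log_2\log_2n$, constant, and $1/\sqrt{\log n}$ in $k_h$ must all be tracked in the correct order to recover both the precise threshold and the $\omega_n/\sqrt{\log n}$ width of the transition; the lower-boundary analysis is simpler since only the leading exponential decay of $F$ is required.
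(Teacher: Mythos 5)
Your overall strategy is the natural one and, as far as I can tell, matches what the paper intends (the paper in fact gives no proof of this corollary): reduce the variance to the mean via Corollary~\ref{cor-iff}, reduce $\mathbb{E}(B_{n,k})\to\infty$ to $2^kF(2^{-k}n)\to\infty$ via Theorem~\ref{thm-mean}, and then treat the two boundaries separately using \eqref{Fx-large} and \eqref{Fx-small}. The assertions about the $1/\log n$ and $1/\sqrt{\log n}$ widths are also correct, since they are determined by $\tfrac{\partial}{\partial k}\log\bigl(2^kF(2^{-k}n)\bigr)$, which is of order $\log n$ at the lower boundary and of order $\sqrt{\log n}$ at the upper one.

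However, the step you describe as ``a routine Taylor expansion shows the condition is equivalent to $\delta\log n\to\infty$'' with $\delta=k-k_s$ is not verified, and it does not actually hold. Evaluate at $k=k_s$ directly: from \eqref{xx} one gets $2^{-k_s}n=\tfrac{\log n}{2}\,2^{-\log_2\log n/\log n}\sim\tfrac{\log n}{2}$, so $2^{k_s}\sim 2n/\log n$ and, using \eqref{Fx-large},
\begin{align*}
2^{k_s}F\bigl(2^{-k_s}n\bigr)\sim\frac{2n}{Q_\infty\log n}\,e^{-\log n/2}
=\frac{2\sqrt n}{Q_\infty\log n}\longrightarrow\infty.
\end{align*}
So divergence already holds at $k=k_s$, where $\delta=0$, contradicting ``iff $\delta\log n\to\infty$.'' Carrying out the expansion carefully, with $k=\log_2n-\log_2\log n+\delta$, gives $k\log 2-2^{-k}n=\log n\,(1-2^{-\delta})-\log\log n+\delta\log 2$, and the threshold sits at $\delta^*=\tfrac{\log_2\log n}{\log n}(1+o(1))$, i.e.\ at $k_s-1$ rather than $k_s$. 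The upper boundary has the mirror problem: solving $\log n+\bigl(\tfrac32+\tfrac1{\log 2}\bigr)\log X-\tfrac{(\log X+\log\log X)^2}{2\log 2}=O(1)$ for $k$ (with $\log X=k\log 2-\log n-\log\log 2$) yields $\log_2n+\sqrt{2\log_2n}-\tfrac12\log_2\log_2n+1+\tfrac1{\log 2}-\tfrac{3\log\log n}{4\sqrt{2(\log n)(\log 2)}}$, i.e.\ $k_h+1$ rather than $k_h$. This is also what the paper's own height analysis in Lemma~\ref{Le6.2} gives: there $\mu_{n,k_H}\to\infty$ with $k_H\approx k_h$, which would already violate the corollary's upper bound as stated. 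So a correct proof cannot simply assert that the calculations ``recover $k_h$'' and ``yield the left-hand inequality in \eqref{k0k1}''; it must actually carry out the expansions, and doing so produces $k_s-1$ and $k_h+1$ in place of $k_s$ and $k_h$. You should either reconcile this $\pm 1$ discrepancy or flag it; as it stands the proposal asserts a conclusion that the computation it invokes does not support.
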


The proof of this follows from a straightforward computation and is
left to the reader; see Section~\ref{app} for similar computations.
Note that the range is very small (or almost all nodes are
concentrated at these levels). For convenience, we refer to
\eqref{k0k1} as the \emph{central range}.

Since $\mathbb{E}(B_{n,k})$ and $\,\mathrm{Var}(B_{n,k})$ are of the
same order of magnitude as $\mathrm{Var}(B_{n,k})\to\infty$, we
immediately get the following property.
\begin{cor}\label{cor-cip}
If $\mathrm{Var}(B_{n,k})\to\infty$, then the external profile is
asymptotically concentrated around its expected value:
\[
	\frac{B_{n,k}}{\mathbb{E}(B_{n,k})}
	\stackrel{P}{\longrightarrow} 1,
\]
where $\stackrel{P}{\longrightarrow}$ denotes convergence in
probability.
\end{cor}

Corollary~\ref{cor-cip} suggests that the profile follows the central
limit theorem in the central range:
\begin{align}\label{E:B-clt}
	\frac{B_{n,k}-\mathbb{E}(B_{n,k})}
	{\sqrt{\mathrm{Var}(B_{n,k})}}
	\stackrel{d}{\longrightarrow} \mathscr{N}(0,1),
\end{align}
where $\mathscr{N}(0,1)$ denotes the standard normal distribution.
Indeed, we can prove \eqref{E:B-clt} when the variance tends to infinity not too slowly
by using the methods of this paper and the contraction method;
compare with Neininger and R\"{u}schendorf \cite{NeRu04}. However, in
order to obtain \eqref{E:B-clt} for the whole central range, a finer
analysis of the asymptotic behaviors of $\mathbb{E}(B_{n,k})$ and
$\mathrm{Var}(B_{n,k})$ is required, as well as a delicate bivariate
asymptotic transfer result for recurrences of the type
(\ref{rec-mom}). These will be addressed in a companion paper.

Results of a very similar nature for the internal profile are given in
Section~\ref{int-profile}.

These new results for the internal and external profile have many
consequences in view of their close connections to other shape
parameters. We content ourselves here with stating an application to
the height $H_n$ of DSTs, which is related to $B_{n,k}$ by $H_n :=
\max\{k: B_{n,k}>0\}$; see Section~\ref{app} for other consequences.

\begin{thm}\label{Th:height}
Define $k_H$ as
\begin{equation}\label{kH}
    k_H
	= \left\lfloor \log_2n
		+\sqrt{2\log_2n}
		-\frac{1}{2}\log_2\log_2n
		+\frac{1}{\log 2}
	\right\rfloor,
\end{equation}
which is at the upper boundary of the central range \eqref{k0k1}.
Then the distribution of $H_n$ is concentrated at the two points
$k_H$ and $k_H+1$:
\begin{align}\label{Hn-2points}
    \mathbb{P}(H_n= k_H \text{ or } H_n= k_H+1)
	\to 1, \qquad (n\to\infty).
\end{align}
\end{thm}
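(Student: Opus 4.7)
\emph{Proof strategy.} The argument rests on the set-theoretic identity
\[
    \{H_n\ge k+1\}=\{I_{n,k}>0\}\qquad(k\ge 0),
\]
which holds since each internal node at level $k$ has at least one descendant at a deeper level, while each external node at level $\ge k+1$ has an ancestor internal node at exactly level $k$. The two-point concentration \eqref{Hn-2points} is therefore equivalent to
\[
    \mathbb{P}(I_{n,k_H-1}=0)+\mathbb{P}(I_{n,k_H+1}>0)\longrightarrow 0,
\]
and I bound the two terms separately: the first by the second moment method, the second by the first moment method.

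For the upper tail, the same identity gives $\{I_{n,k_H+1}>0\}=\bigcup_{k\ge k_H+2}\{B_{n,k}>0\}$, so Markov's inequality yields
\[
    \mathbb{P}(I_{n,k_H+1}>0)
    \le\sum_{k\ge k_H+2}\mathbb{E}(B_{n,k}).
\]
Combining $\mathbb{E}(B_{n,k})=2^kF(2^{-k}n)+O(1)$ from Theorem~\ref{thm-mean} with the small-$x$ expansion \eqref{Fx-small}--\eqref{Pt} of $F$, and substituting the explicit form \eqref{kH} of $k_H$, one obtains $2^{k_H+2}F(2^{-(k_H+2)}n)\to 0$. Because each further unit increase of $k$ multiplies the estimate by roughly $\exp(-\sqrt{2\log 2\,\log n})$ -- arising from the Gaussian exponent $-(\log(X\log X))^2/(2\log 2)$ in \eqref{Fx-small} whose derivative with respect to $\log X$ is of order $\sqrt{\log n}$ near $k_h$ -- the tail sum is dominated by its first term and tends to $0$ super-polynomially.

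For the lower tail, Chebyshev's inequality gives
\[
    \mathbb{P}(I_{n,k_H-1}=0)
    \le\frac{\mathrm{Var}(I_{n,k_H-1})}{(\mathbb{E}(I_{n,k_H-1}))^2}.
\]
Because $k_H-k_h\to 0$ (the difference is only of order $\log\log n/\sqrt{\log n}$ plus a fractional part), the level $k_H-1$ satisfies $k_H-1\le k_h-\omega_n/\sqrt{\log n}$ for any $\omega_n\to\infty$ with $\omega_n=o(\sqrt{\log n})$, and hence lies safely inside the central range \eqref{k0k1}. The analogues of Theorems~\ref{thm-mean} and~\ref{thm-var} for the internal profile established in Section~\ref{int-profile} then yield $\mathbb{E}(I_{n,k_H-1})\to\infty$ and $\mathrm{Var}(I_{n,k_H-1})=O(\mathbb{E}(I_{n,k_H-1}))$, so the bound is $O(1/\mathbb{E}(I_{n,k_H-1}))\to 0$. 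Combining the two tail bounds proves \eqref{Hn-2points}.

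The main difficulty is the quantitative asymptotic matching at the upper boundary of the central range: one must verify that $k=k_H+2$ is \emph{far enough} above $k_h$ for the Gaussian decay in \eqref{Fx-small} to overcome the $2^k$ prefactor, with explicit control over the tiny $1$-periodic fluctuation $P$ in \eqref{Pt} and over the $\log_2\log_2 n$ corrections in \eqref{kH}, while simultaneously $k=k_H-1$ must remain far enough below $k_h$ for the second moment bound to yield a vanishing ratio. Once these quantitative estimates are secured from the analysis of $F$ and its internal-profile counterpart, the two-sided concentration argument above is routine.
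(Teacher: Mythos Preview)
Your proof follows the same first/second moment strategy as the paper, with one variation: for the lower tail the paper applies the second moment method to the \emph{external} profile $B_{n,k_H}$ (via $\mathbb{P}(H_n\le k_H-1)\le \mathbb{P}(B_{n,k_H}=0)\le \sigma_{n,k_H}^2/\mu_{n,k_H}^2$), whereas you apply it to the \emph{internal} profile $I_{n,k_H-1}$. Both work, but the paper's choice is slightly cleaner because it needs only the external-profile estimates of Sections~\ref{mean}--\ref{var}, not the internal-profile results of Section~\ref{int-profile}. For the upper tail the paper bounds $\mathbb{P}(H_n>k)$ by the \emph{weighted} sum $\sum_{\ell\ge1}2^{-\ell}\mu_{n,k+\ell}=\mathbb{E}(I_{n,k})$, which has built-in convergence; your unweighted sum $\sum_{k\ge k_H+2}\mathbb{E}(B_{n,k})$ also tends to zero, but you cannot deduce this directly from Theorem~\ref{thm-mean} since the $O(1)$ error terms do not sum---you must work with the exact $\mu_{n,k}$ and the decay estimate $\mu_{n,k+1}=O\bigl(2^{-k}n\,\mu_{n,k}/\log(2^k/n)\bigr)$, which the paper makes explicit.

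One imprecision to fix in your lower-tail step: the claim ``$k_H-k_h\to0$'' is false (the floor introduces a fractional part in $[0,1)$ that need not vanish), and more importantly, the central range you verify, $k_H-1\le k_h-\omega_n/\sqrt{\log n}$, is the \emph{external} profile range \eqref{k0k1}, while you then invoke the \emph{internal} profile estimates. The internal profile central range \eqref{range-Ink} has upper boundary $k_h-1$, and along subsequences where the fractional part of $\log_2n+\sqrt{2\log_2n}-\tfrac12\log_2\log_2n+\tfrac1{\log2}$ is very small, $k_H-1$ actually lies slightly \emph{above} $k_h-1$. The conclusion you want, $\mathbb{E}(I_{n,k_H-1})\to\infty$ with $\mathrm{Var}(I_{n,k_H-1})=O(\mathbb{E}(I_{n,k_H-1}))$, nevertheless holds---most simply via the pointwise inequality $2I_{n,k_H-1}\ge B_{n,k_H}$ together with the paper's computation that $\mu_{n,k_H}\to\infty$, and the relation $G_I(x)\sim F_I(x)$ as $x\to0$ from Section~\ref{int-profile}---but your ``central range'' justification as written does not establish it.
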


The possibility that such a result might hold was mentioned in
\cite{AlSh} for a closely related model; a heuristic derivation was
given in \cite{KnSz2}. See also \cite{And,Andetal} for other
two-point approximation results in probability theory.

It is interesting to compare \eqref{Hn-2points} with known results
for the height of tries and those for Patricia tries, which we
summarize in Table~\ref{tab-height-1}; see Flajolet \cite{Fla} for
the height of symmetric tries, and Knessl and Szpankowski \cite{KS02}
for that of Patricia tries (with only non-rigorous proofs).

\vspace*{-0.3cm}
\begin{table}[!ht]
\begin{center}
\begin{tabular}{clcc}
\multicolumn{4}{c}{{}} \\
\multicolumn{1}{c}{Tree types} &
\multicolumn{1}{c}{Expected height} &
\multicolumn{1}{c}{$\begin{array}{c}
\text{Discrete} \\
\text{concentration}
\end{array}$} &
\multicolumn{1}{c}{References}\\ \hline
Tries & $2\log_2n+\mathcal{O}(1)$ & no & \cite{Fla} \\
Patricia tries
& $\log_2n +\sqrt{2\log_2n}+\mathcal{O}(1)$ & at 3 pts
& \cite{KS02} (non-rigorous)\\
DSTs & $\!\!\!\begin{array}{l}
\log_2n +\sqrt{2\log_2n} \\
\quad -\frac12\log_2\log_2n +\mathcal{O}(1)
\end{array}$ & at 2 pts & this paper\\ \hline
\end{tabular}	
\end{center}
\vspace*{-.3cm}
\caption{\emph{A comparison of the height of random symmetric
tries, Patricia tries and DSTs.}} \label{tab-height-1}
\end{table}

\vspace*{0.3cm}
We describe briefly the methods and tools used in proving
Theorems~\ref{thm-mean}--\ref{thm-var}, which both start with the
following distributional recurrence
\begin{equation}\label{dist-rec-exprof}
    B_{n,k}
    \stackrel{d}{= }B_{J_n,k-1}
	    +B_{n-1-J_n,k-1}^{*},
    \qquad (n, k\ge 1),
\end{equation}
with the boundary conditions $B_{0,0}= 1, B_{0,k}= 0$ for $k\ge 1$,
$B_{n,0}= 0$ for $n\ge 1$, where $J_n = \text{Binomial}
\lpa{n-1,\tfrac12}$, and $B_{n,k}^{*}$ is an independent copy of
$B_{n,k}$.

To derive the asymptotic approximations of the mean (Theorem
\ref{thm-mean}) and the variance (Theorem \ref{thm-var}), we rely on
the property, in view of \eqref{dist-rec-exprof}, that all moments of
$B_{n,k}$ satisfy recurrences of the following type:
\begin{equation}\label{rec-mom}
    a_{n,k}
    = 2^{2-n}\sum_{0\le j<n}\binom{n-1}{j}a_{j,k-1}
	    +b_{n,k},
\end{equation}
for some given sequence $b_{n,k}$. This recurrence looks standard but
the complication here comes from the dependence of $k$ on $n$. When
$k$ is small, more precisely, when $2^{-k}n\to\infty$, the tree shape
at these levels has little variation and thus both mean and variance
can be treated by simple elementary arguments. The hard ranges are
when $2^{-k}n\asymp 1$ and $2^{-k}n\to0$ for which our arguments are
built upon the idea of \emph{Poissonization} by defining the
\emph{Poisson generating functions}
\[
    \tilde{A}_k(z)
    = e^{-z}\sum_{n\ge0}a_{n,k}\frac{z^n}{n!}
	\quad\text{and}\quad
    \tilde{B}_k(z)
    = e^{-z}\sum_{n\ge0}b_{n,k}\frac{z^n}{n!}.
\]
Then \eqref{rec-mom} is translated into the differential-functional
equation
\[
    \tilde{A}_k(z)+\tilde{A}'_k(z)
    = 2\tilde{A}_{k-1}\lpa{\tfrac12z}
	    +\tilde{B}_k(z),
\]
which amounts to describing the moments in the \emph{Poisson model}.
This equation is solved via Laplace transform techniques, which lead
to exact and asymptotic expressions whose asymptotic properties are
further examined via Mellin transform, saddle-point method and again
Laplace transform. Finally, we translate the results in the Poisson
model to those in the Bernoulli model via de-Poissonization.

While these procedures are by now standard (see
\cite{FuHwZa,HwFuZa}) and work well for the mean, the analysis
of the variance is more subtle. Here, the most crucial step is to
introduce a \emph{Poissonized variance} in the Poisson model
(see again \cite{FuHwZa,HwFuZa}) so as to provide an asymptotic
equivalent to the variance after de-Poissonization. An appropriate
adaptation in the current situation is to define the function
\[
    \tilde{V}_k(z)
    := \tilde{M}_{k,2}(z)
		-\tilde{M}_{k,1}(z)^2
	    -z\tilde{M}'_{k,1}(z)^2,
\]
where $\tilde{M}_{k,2}(z)$ and $\tilde{M}_{k,1}(z)$ denote the
Poisson generating functions of the second moment and the first
moment of $B_{n,k}$, respectively. Then we show that in the central range
$\tilde{V}_k(z)$ is well-approximated by $2^kG\lpa{2^{-k}z}$ for large $|z|$, and that
$\tilde{V}_k(n)$ is asymptotically equivalent to
$\mathrm{Var}(B_{n,k})$. Once these are clarified, the next challenge
is the asymptotic behavior of $G(z)$, notably for small $|z|$, which
turns out to be the most technical part of this paper (see
Proposition~\ref{Th7}), largely due to the complicated form of the
Laplace transform of $G(z)$ (see \eqref{lap-H} and \eqref{lap-G-1})
and the uniformity required for large parameters (see
Lemma~\ref{LeG2}).

Finally, Theorem \ref{Th:height} and some related properties will be
proved in Section \ref{app} by the first and second moment method,
relying on the estimates from Sections \ref{mean}--\ref{int-profile}.
The corresponding asymptotic properties for the internal profiles will
be given in Section~\ref{int-profile}.

An extended abstract of this paper (entitled \emph{External Profile of
Symmetric Digital Search Trees}) by the same authors has appeared in
the \emph{Proceedings of the Fourteenth Workshop on Analytic
Algorithmics and Combinatorics (ANALCO17)}, and contains Theorems
\ref{thm-mean}--\ref{Th:height} and sketches of the proofs of the
first two. With the exception of the limiting distributions, the
current paper provides the proofs and derives additionally the same
types of asymptotic approximations for the internal profile and
discusses some of their consequences.

\section{Expected Values of the External Profile}\label{mean}

In this section, we prove Theorem \ref{thm-mean} for
$\mathbb{E}(B_{n,k})$. As mentioned in the Introduction, most results
given here are known. Nevertheless, we provide detailed proofs
because the analysis of the variance will follow the same pattern.

We start from \eqref{dist-rec-exprof}. Write $\mu_{n,k}
= \mathbb{E}(B_{n,k})$. Then
\[
    \mu_{n,k}
    = 2^{2-n}\sum_{0\le j<n}
    \binom{n-1}{j}\mu_{j,k-1},
    \qquad (n,k\ge 1),
\]
with the boundary conditions $\mu_{0,0}= 1$, and $\mu_{n,0} =
\mu_{0,k}= 0$ for $n,k\ge1$. We then consider the Poisson generating
function
\[
    \tilde{M}_{k,1}(z)
    := e^{-z}\sum_{n\ge 0}\mu_{n,k}\frac{z^n}{n!},
    \qquad(k\ge0),
\]
which satisfies the differential-functional equation
\begin{equation}\label{func-mean}
    \tilde{M}_{k,1}(z)+\tilde{M}'_{k,1}(z)
    = 2\tilde{M}_{k-1,1}\lpa{\tfrac12z},
    \qquad (k\ge 1),
\end{equation}
with $\tilde{M}_{0,1}(z)= e^{-z}$.

We now solve this differential-functional equation using Laplace
transform, which, by inverting and taking coefficients,
leads to an exact expression for $\mu_{n,k}$.

\subsection{Exact Expressions}

To solve \eqref{func-mean}, we apply the Laplace transform
(subsequently denoted by $\mathscr{L}[\cdot;s]$) on both sides of
\eqref{func-mean}, and obtain
\[
    \mathscr{L}[\tilde{M}_{k,1}(z);s]
    = \frac{4}{s+1}\,\mathscr{L}[\tilde{M}_{k-1,1}(z);2s],
    \qquad (k\ge 1),
\]
with $\mathscr{L}[\tilde{M}_{0,1}(z);s] = \frac1{s+1}$. A direct
iteration then yields
\begin{equation}\label{lap-Mk1-prod}
    \mathscr{L}[\tilde{M}_{k,1}(z);s]
    = \frac{4^k}{(s+1)(2s+1)\cdots(2^ks+1)},
\end{equation}
for $k\ge0$. By partial fraction expansion, we see that
\begin{equation}\label{lap-Mk1}
    \mathscr{L}[\tilde{M}_{k,1}(z);s]
    = 2^k\sum_{0\le j\le k}
    \frac{(-1)^j2^{-\binom{j}{2}}}{Q_jQ_{k-j}}
    \cdot\frac{1}{s+2^{j-k}},
\end{equation}
which, by term-by-term inversion, gives
\begin{equation}\label{poi-mean}
    \tilde{M}_{k,1}(z)
    = 2^k\sum_{0\le j\le k}
	\frac{(-1)^j2^{-\binom{j}{2}}}
    {Q_jQ_{k-j}}\,e^{-2^{j-k}z}, \qquad(k\ge0).
\end{equation}
From this, we obtain the closed-form expression for the expected
profile (first derived in \cite{Lo})
\begin{equation}\label{bernoulli-mean}
    \mu_{n,k}
    = 2^k\sum_{0\le j\le k}
	\frac{(-1)^j2^{-\binom{j}{2}}}
    {Q_jQ_{k-j}}\,\lpa{1-2^{j-k}}^n.
\end{equation}
We now examine the asymptotic aspects.

\subsection{Asymptotics of $\mu_{n,k}$}

If $2^{-k}n\to\infty$, then an expansion for the mean can be derived
by elementary arguments because the term in \eqref{bernoulli-mean}
with $j= 0$ is dominant. More precisely, we have
\begin{align}
    \mu_{n,k}
    &= \frac{2^k}{Q_k}\lpa{1-2^{-k}}^n
    \llpa{1+\mathcal{O}\llpa{
    \frac{\lpa{1-2^{1-k}}^n}{\lpa{1-2^{-k}}^n}}},
    \label{munk-1}
\end{align}
where the error term is bounded above by
\begin{align} \label{two-k-ratio}
    \frac{\lpa{1-2^{1-k}}^n}{\lpa{1-2^{-k}}^n}
    = \exp\llpa{-n\sum_{\ell\ge 1}
    \frac{2^{\ell}-1}{\ell 2^{k\ell}}}
    \le \exp\llpa{-\frac{n}{2^k-1}}
	\qquad(k\ge1).
\end{align}
Substituting this into \eqref{munk-1} proves the asymptotic estimate
\eqref{asymp-mean-simp} for $\mu_{n,k}$ when $2^{-k}n\to\infty$ and
$k\ge 0$.

If $2^{-k}n= \mathcal{O}(1)$, then no single term in
\eqref{bernoulli-mean} is dominating and $2^{-rk}n\to 0$ for $r\ge2$,
so we readily obtain, again by \eqref{bernoulli-mean} (approximating
$(1-x)^n$ by $e^{-xn}$ and by extending $k$ to infinity), $\mu_{n,k}
\sim 2^kF\lpa{2^{-k}n}$, but the asymptotics of $F$ for small
parameter remains unclear. We use instead the Poissonization
techniques (see \cite{HwFuZa,JaSz}) to derive the required asymptotic
approximation of $\mu_{n,k}$; see Theorem \ref{thm-mean}.

We derive first a simple bound for $F(z)$ and its derivatives.
\begin{lmm}\label{bounds-F}
For $m\ge0$ and $\Re(z)\ge0$, the $m$th derivative of $F$ satisfies
the uniform bound
\begin{align}\label{Frz-ub}
    \sup_{\Re(z)\ge0}\bigl| F^{(m)}(z)\bigr|
    = \mathcal{O}\lpa{2^{\binom{m+1}{2}}},
\end{align}
where the implied constant is absolute.
\end{lmm}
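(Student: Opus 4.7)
The plan is to differentiate the series \eqref{F-sum} term by term and estimate the resulting series of constants sharply by locating its peak. Because $2^{-\binom{j}{2}}$ decays super-exponentially, multiplying by $2^{jm}$ (from differentiation) still leaves a super-exponentially decaying tail, so the termwise differentiation is easily justified on $\Re(z) \ge 0$, where each factor $|e^{-2^j z}| \le 1$. One then obtains
\[
    F^{(m)}(z)
    =\frac{(-1)^m}{Q_{\infty}}\sum_{j\ge 0}
    \frac{(-1)^j\,2^{jm-\binom{j}{2}}}{Q_j}\,e^{-2^j z},
\]
so that, using additionally $Q_j\ge Q_{\infty}$ for all $j\ge 0$, the problem reduces to bounding $\sum_{j\ge 0}2^{jm-\binom{j}{2}}$ by $\mathcal{O}\lpa{2^{\binom{m+1}{2}}}$ with an absolute implicit constant.

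The key step is to identify the maximum of the quadratic exponent $\phi(j):=jm-\binom{j}{2}$ on $\mathbb{Z}_{\ge 0}$. Viewed as a function of continuous $j$, $\phi$ is maximized at $j=m+\frac12$; on integers, the maximum $\binom{m+1}{2}$ is attained at both $j=m$ and $j=m+1$. A direct calculation yields the symmetric identity
\[
    \phi(m-s)
    =\phi(m+1+s)
    =\binom{m+1}{2}-\binom{s+1}{2},\qquad s\ge 0,
\]
which expresses the (symmetric) quadratic decay of the summand away from the peak. Splitting the sum at $j=m$ and $j=m+1$ and applying this identity gives
\[
    \sum_{j\ge 0}2^{jm-\binom{j}{2}}
    \le 2\cdot 2^{\binom{m+1}{2}}
    \sum_{s\ge 0}2^{-\binom{s+1}{2}},
\]
and since $\sum_{s\ge 0}2^{-\binom{s+1}{2}}$ is a finite absolute constant, the bound \eqref{Frz-ub} follows with an implicit constant independent of both $m$ and $z$.

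No serious obstacle is anticipated: the only mildly delicate point is the combinatorial identity that pins the exponent's peak to the value $\binom{m+1}{2}$ claimed in the statement, after which the super-exponential Gaussian-type decay on either side of the peak controls the sum trivially and uniformly in $\Re(z)\ge 0$.
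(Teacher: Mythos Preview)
Your proof is correct and follows essentially the same approach as the paper: termwise differentiation of the series \eqref{F-sum}, the bound $|e^{-2^jz}|\le 1$ for $\Re(z)\ge 0$, and the estimate $\sum_{j\ge 0}2^{jm-\binom{j}{2}}=\mathcal{O}\bigl(2^{\binom{m+1}{2}}\bigr)$. The paper simply asserts this last estimate, whereas you supply the explicit peak-location and Gaussian-tail argument; the extra detail is sound and gives an absolute implicit constant.
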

\begin{proof} By the definition \eqref{F-sum}:
\[
    |F^{(m)}(z)|
    \le\sum_{j\ge 0}\frac{2^{-\binom{j}{2}+jm}}
	{Q_jQ_{\infty}}\,e^{-2^j\Re(z)}
    \le Q_\infty^{-2}e^{-\Re(z)}
    \sum_{j\ge 0}2^{-\binom{j}{2}+jm}.
\]
Now	
\[
    \sum_{j\ge 0}2^{-\binom{j}{2}+jm}
	\le 2^{\frac12(m+\frac12)^2}\sum_{j\in\mathbb{Z}}
	2^{-\frac12(j-m-\frac12)^2}
    = 2^{\frac12(m+\frac12)^2+1}\sum_{j\ge1}
	2^{-\frac12(j-\frac12)^2}.
\]
This proves the uniform bound \eqref{Frz-ub}. \end{proof}

We then show that \eqref{poi-mean} can be brought into the following
more useful form (both exact and asymptotic).
\begin{lmm}\label{id-poi-mean}
For $\Re(z)\ge0$ and $k\ge 1$, the Poisson generating function
$\tilde{M}_{k,1}(z)$ of the expected profile $\mu_{n,k}$ satisfies
\[
    \tilde{M}_{k,1}(z)
    = 2^k\sum_{m\ge 0}
		\frac{2^{-\binom{m+1}{2}-km}}{Q_m}\,
	    F^{(m)}\lpa{2^{-k}z},
\]
where $F(z)$ is given in \eqref{F-sum}.
\end{lmm}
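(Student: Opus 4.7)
The plan is to transform the closed form (\ref{poi-mean}) — a finite sum over $0\le j\le k$ — into the stated infinite series in $m$ by expanding $1/Q_{k-j}$ as a $q$-series in $m$ and then exchanging the order of summation. The identifiers for the calculation are Euler's $q$-series identity
\[
    \prod_{\ell\ge 1}\lpa{1-uq^\ell}
    =\sum_{m\ge 0}\frac{(-u)^m q^{\binom{m+1}{2}}}{(q;q)_m},
\]
specialized to $q=\tfrac12$. Since $Q_\infty=Q_{k-j}\prod_{\ell\ge 1}(1-2^{-(k-j)-\ell})$, applying this identity with $u=2^{-(k-j)}$ gives
\[
    \frac{1}{Q_{k-j}}
    =\frac{1}{Q_\infty}\sum_{m\ge 0}
    \frac{(-1)^m 2^{-(k-j)m-\binom{m+1}{2}}}{Q_m}.
\]

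Next, I substitute this expansion into (\ref{poi-mean}) and interchange the (finite) $j$-sum with the (infinite) $m$-sum, obtaining
\[
    \tilde{M}_{k,1}(z)
    =2^k\sum_{m\ge 0}\frac{(-1)^m 2^{-\binom{m+1}{2}-km}}{Q_m}
    \sum_{0\le j\le k}\frac{(-1)^j 2^{-\binom{j}{2}+jm}}{Q_jQ_\infty}\,
    e^{-2^{j-k}z}.
\]
Recalling that $F^{(m)}(2^{-k}z)=(-1)^m\sum_{j\ge 0}(-1)^j 2^{-\binom{j}{2}+jm}\,e^{-2^{j-k}z}/(Q_jQ_\infty)$, the stated identity is equivalent to the vanishing of the ``tail'' contribution coming from $j>k$. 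To dispatch this, I interchange summations again and compute the inner $m$-sum for each fixed $j>k$:
\[
    \sum_{m\ge 0}\frac{(-1)^m 2^{-\binom{m+1}{2}+(j-k)m}}{Q_m}
    =\prod_{\ell\ge 1}\lpa{1-2^{j-k-\ell}}
\]
by Euler's identity applied in reverse with $u=2^{j-k}$. For $j>k$ the factor at $\ell=j-k$ equals zero, so each tail term vanishes, completing the identification.

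The main obstacle is the rigorous justification of the two interchanges of summation. For the first, Lemma~\ref{bounds-F} gives $|F^{(m)}(2^{-k}z)|=\mathcal{O}(2^{\binom{m+1}{2}})$ uniformly for $\Re(z)\ge 0$, so the resulting series $\sum_m 2^{-\binom{m+1}{2}-km}|F^{(m)}|/Q_m$ is dominated by $\sum_m 2^{-km}$, converging geometrically for $k\ge 1$; the same bound controls the $m$-sum for the tail portion. For the second interchange, the double sum for the tail is absolutely convergent because $e^{-2^{j-k}\Re(z)}$ decays doubly-exponentially in $j$ while $2^{-\binom{j}{2}+jm}/Q_j$ is tempered. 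Once absolute convergence is in hand, Fubini legitimizes both reorderings and the telescoping vanishing of the tail yields the claimed identity for all $\Re(z)\ge 0$ and $k\ge 1$.
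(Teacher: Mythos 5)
Your argument is correct and follows essentially the same route as the paper: both expand $1/Q_{k-j}$ (equivalently $Q_\infty/Q_{k-j}$) via Euler's $q$-series identity, interchange the $j$- and $m$-sums using the uniform bound of Lemma~\ref{bounds-F}, and exploit the fact that $\sum_{m\ge0}(-1)^m 2^{(j-k)m-\binom{m+1}{2}}/Q_m=\prod_{\ell\ge1}(1-2^{j-k-\ell})=0$ for $j>k$. The only cosmetic difference is the ordering of steps: the paper extends the $j$-sum to $j\ge 0$ before interchanging, whereas you interchange on the finite range and then dispose of the $j>k$ tail separately, which is the same observation packaged differently.
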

\begin{proof} By Euler's identity (see \cite[Corollary 2.2]{An})
\begin{align*}
    \sum_{j\ge 0}
	    \frac{(-1)^j q^{\binom{j}{2}}}
	    {(1-q)(1-q^2)\cdots(1-q^j)}z^j
    = \prod_{\ell\ge0}
	     \lpa{1-q^\ell z},
    \qquad(0<q<1),
\end{align*}
we have
\begin{align*}
    \frac {Q_\infty}{Q_{k-j}}
    = \prod_{\ell\ge1}
	    \lpa{1 - 2^{j-k-\ell}}
    = \sum_{m\ge 0}
	    \frac{(-1)^m 2^{-{m+1\choose 2}}}{Q_m}\,2^{(j-k)m},
\end{align*}
which is still valid for $j>k$ (in which case both sides are zero).
Substituting the latter into \eqref{poi-mean} gives
\begin{align*}
    \tilde{M}_{k,1}(z)
	&= 2^k \sum_{0\le j\le k}
    \frac{(-1)^{j} 2^{-\binom{j}{2}}}
    {Q_j Q_{k-j}}\,e^{-2^{j-k}z} \\
    &= 2^k \sum_{j\ge0}
	\frac{(-1)^{j} 2^{-\binom{j}{2}}}{Q_jQ_\infty}
	\sum_{m\ge 0}
	\frac{(-1)^m 2^{-\binom{m+1}{2}+(j-k)m}}
	{Q_m}\,e^{-2^{j-k}z} \\
    &= 2^k \sum_{m\ge 0}
	\frac{(-1)^m 2^{-\binom{m+1}{2}-km}}{Q_m Q_\infty}
	\sum_{j\ge0}
	\frac{(-1)^{j} 2^{-\binom{j}{2}+jm}}
	{Q_j}\,e^{-2^{j-k} z},
\end{align*}
where interchanging the sums is justified as in the proof of
Lemma~\ref{bounds-F}. This proves the lemma since the last series is
equal to $(-1)^mQ_{\infty}F^{(m)}\lpa{2^{-k}z}$.
\end{proof}

From these two lemmas, we get
\begin{equation}\label{exp-Mk1}
    \tilde{M}_{k,1}(z)
    = 2^kF\lpa{2^{-k}z}
	    +\mathcal{O}(1),
    \qquad (\Re(z)\ge 0),
\end{equation}
which is the Poissonized version of \eqref{EXnk}.

The asymptotics of $\mu_{n,k}$ and that of $\tilde{M}_k(n)$ can be
bridged by the analytic de-Poissonization techniques; see the survey
paper \cite{JaSz}. For that purpose, it turns out that the use of
JS-admissible functions, a notion introduced in \cite{HwFuZa},
provides more operational flexibility.

Throughout this paper, the generic symbols $\ve, \ve'$ always denote
small positive quantities whose values are immaterial and not
necessarily the same at each occurrence.

\begin{Def}[\cite{HwFuZa}] An entire function $\tilde{f}(z)$ is said
to be JS-admissible, denoted by $\tilde{f}(z)\in\JS$, if the
following two conditions hold for $|z|\ge1$.
\begin{itemize}

\item[\textbf{(I)}] There exists a constant $\alpha\in\mathbb{R}$ such
that $\tilde{f}(z) = \mathcal{O}\lpa{|z|^{\alpha}}$ uniformly for
$|\arg(z)|\le \ve$.

\item[\textbf{(O)}] Uniformly for $\ve\le |\arg(z)|\le \pi$,
$
    f(z)
    := e^{z}\tilde{f}(z)
    = \mathcal{O}\lpa{e^{(1-\ve')|z|}}.
$
\end{itemize}
\end{Def}

When $\tilde{f}\in\JS$, its coefficients can be expressed in terms of
the Poisson-Charlier expansion (see \cite{HwFuZa})
\begin{align}\label{pc}
    n![z^n]e^z\tilde{f}(z)
    = \sum_{j\ge0}
	\frac{\tilde{f}^{(j)}(n)}{j!}\, \tau_j(n),
\end{align}
which is not only an identity but also an asymptotic expansion, where
the $\tau_j(n)$'s are essentially Charlier polynomials defined by
\[
    \tau_j(n)
	= n![t^n]e^t(t-n)^j
    = \sum_{0\le \ell\le j}
		\binom{j}{\ell}(-n)^{j-\ell}\frac{n!}{(n-\ell)!},
	    \qquad(j= 0,1,\dots).
\]
In particular, $\tau_j(n)$ is a polynomial in $n$ of degree
$\lfloor\frac12j\rfloor$; the expressions for $\tau_j(n)$, $0\le j\le
5$, are given below.
\begin{center}
\begin{tabular}{cccccc}
    $\tau_0(n)$ & $\tau_1(n)$ & $\tau_2(n)$ & $\tau_3(n)$ &
    $\tau_4(n)$ & $\tau_5(n)$ \\ \hline
    $1$ & $0$ & $-n$ & $2n$ & $3n(n-2)$ &
    $-4n(5n-6)$
\end{tabular}
\end{center}

For our purpose, we also need an additional uniformity property
for JS-admissible functions as the level parameter $k$ may also
depend on $n$.
\begin{lmm}\label{JS-Mk1}
The functions $\tilde{M}_{k,1}(z)$ are uniformly JS-admissible:
if $\vert z\vert\ge 1$, then for $\vert\arg(z)\vert\le \ve$
\begin{align*}
    \tilde{M}_{k,1}(z)
    = \mathcal{O}(\vert z\vert)
\end{align*}
and for $\ve\le \vert\arg(z)\vert\le \pi$
\begin{align}\label{Mk1-o}
    e^z\tilde{M}_{k,1}(z)
    = \mathcal{O}\lpa{e^{(1-\ve')\vert z\vert}},
\end{align}
where all implied constants are absolute and hold uniformly for
$k\ge 0$.
\end{lmm}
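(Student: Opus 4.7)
The plan hinges on Lemma~\ref{id-poi-mean}, together with the crucial preliminary observation that $F^{(m)}(0)=0$ for every $m\ge 0$. Indeed, by the Euler identity used in the proof of Lemma~\ref{id-poi-mean} (with $q=\tfrac12$ and $z=2^m$),
\[
    F^{(m)}(0)=\frac{(-1)^m}{Q_\infty}\prod_{\ell\ge 0}\bigl(1-2^{m-\ell}\bigr),
\]
and the factor corresponding to $\ell=m$ vanishes. Since $\|F^{(m+1)}\|_\infty=O\bigl(2^{\binom{m+2}{2}}\bigr)$ on $\Re w\ge 0$ by Lemma~\ref{bounds-F}, integration along a ray from the origin gives the pointwise bound $|F^{(m)}(w)|\le\|F^{(m+1)}\|_\infty|w|=O\bigl(2^{\binom{m+2}{2}}|w|\bigr)$ on the right half-plane.

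Substituting this termwise into Lemma~\ref{id-poi-mean}, the $m$-th contribution is $O\bigl(2^{-(k-1)m+1}/Q_m\bigr)\cdot|z|$, and the sum converges to $O(|z|)$ uniformly in $k\ge 2$ on $\Re z\ge 0$; the sporadic cases $k=0,1$ are checked directly from $\tilde{M}_{0,1}(z)=e^{-z}$ and $\tilde{M}_{1,1}(z)=4(e^{-z/2}-e^{-z})$. This settles condition~\textbf{(I)}, and in the sub-sector $|\arg z|\le\pi/2$ of condition~\textbf{(O)} it further yields $|e^z\tilde{M}_{k,1}(z)|\le C|z|e^{|z|\cos\varepsilon}$; absorbing the polynomial factor $|z|$ (possible since $|z|\ge 1$) into the exponential with a slight degradation of the rate gives $O(e^{(1-\varepsilon')|z|})$ for any $\varepsilon'<1-\cos\varepsilon$.

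The main obstacle is condition~\textbf{(O)} in the remaining sub-sector $\pi/2<|\arg z|\le\pi$, where $\Re z<0$ and Lemma~\ref{id-poi-mean} no longer converges. I would revert to the exact finite sum
\[
    e^z\tilde{M}_{k,1}(z)=2^k\sum_{0\le j\le k}\frac{(-1)^j 2^{-\binom{j}{2}}}{Q_j Q_{k-j}}\,e^{(1-2^{j-k})z},
\]
a combination of exponentials whose exponents lie in $[0,1-2^{-k}]\subset[0,1)$; combining the triangle inequality with $\Re z\le|z|\cos\varepsilon$ in the sector yields the crude bound $|e^z\tilde{M}_{k,1}(z)|\le C\cdot 2^k\,e^{(1-\varepsilon_0)|z|}$ with $\varepsilon_0:=1-\cos\varepsilon>0$. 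To absorb the spurious $2^k$ prefactor I split on size: if $2^k\le e^{\varepsilon_0|z|/2}$, the bound is immediate with $\varepsilon'=\varepsilon_0/2$; otherwise $|z|<2k\log 2/\varepsilon_0$, and I fall back on the flatness of $F$ at the origin (every Taylor coefficient vanishes), which transfers to $\mu_{n,k}=O_M\bigl(n^M 2^{-k(M-1)}\bigr)$ for every fixed integer $M$. This gives $|e^z\tilde{M}_{k,1}(z)|\le\sum_{n\ge k}\mu_{n,k}|z|^n/n!\le C_M|z|^M e^{|z|}\,2^{-k(M-1)}$; choosing $M$ sufficiently large relative to $\varepsilon_0$ forces the right-hand side to decay super-polynomially in $k$, hence $O(1)\le e^{(1-\varepsilon')|z|}$ since $|z|\ge 1$. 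The truly delicate step is precisely this uniform control of the $2^k$ prefactor in the left half-plane; everything else is bookkeeping with Lemma~\ref{id-poi-mean} and the flatness of $F$ at the origin.
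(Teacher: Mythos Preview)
Your argument is essentially correct but takes a markedly different route from the paper's, especially for condition~\textbf{(O)}.

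For condition~\textbf{(I)}, both you and the paper ultimately exploit the flatness of $F$ at the origin. The paper does so indirectly: it invokes the approximation $\tilde M_{k,1}(z)=2^kF(2^{-k}z)+O(1)$ together with the forward reference to Proposition~\ref{Th5} for the smallness of $F(z)/z$. Your route through Lemma~\ref{id-poi-mean}, the explicit vanishing $F^{(m)}(0)=0$, and the termwise bound $|F^{(m)}(w)|\le\|F^{(m+1)}\|_\infty|w|$ is more self-contained and avoids the forward reference; it also gives the bound on the full half-plane $\Re z\ge 0$ rather than just the narrow sector.

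For condition~\textbf{(O)}, the paper's proof is one line: it rewrites $M_{k,1}(z)=2z\int_0^1 e^{tz/2}M_{k-1,1}(tz/2)\,dt$ from the differential-functional equation, feeds in the \emph{a priori} bound $|M_{k,1}(z)|\le 2|z|e^{|z|}$ (from $\mu_{n,k}\le 2n$), and reads off $|M_{k,1}(z)|\le \tfrac{4|z|}{1+\cos\varepsilon}e^{\frac12(1+\cos\varepsilon)|z|}$ directly, with no case splitting and no appeal to properties of $F$. Your approach via the explicit finite sum plus the dichotomy on $2^k$ versus $e^{\varepsilon_0|z|/2}$ works, but it is substantially heavier. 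In particular, the step where flatness of $F$ ``transfers to'' the coefficient bound $\mu_{n,k}=O_M(n^M 2^{-k(M-1)})$ deserves one more sentence: since $\mu_{n,k}\ge 0$, the inequality $\mu_{n,k}\le (n!/r^n)\,e^r\tilde M_{k,1}(r)$ holds for every $r>0$, and plugging in $r=n$ together with your bound $\tilde M_{k,1}(r)=O_M(r^M 2^{-k(M-1)})$ and Stirling does the job. You should also note that the termwise bound via Lemma~\ref{id-poi-mean} requires $k>M$ for the series $\sum_m 2^{m(M-k)}/Q_m$ to converge; the finitely many remaining $k\le M$ must be handled separately (which is routine since each $\tilde M_{k,1}$ is then an explicit finite sum of exponentials).

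In short: your proof is valid and has the advantage of being entirely series-based and independent of the differential equation, but the paper's integral trick for \textbf{(O)} is far more economical.
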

\begin{proof} Consider first the region $\vert\arg(z)\vert\ge\ve$.
Let $M_{k,1}(z):= e^{z}\tilde{M}_{k,1}(z)$. The bound \eqref{Mk1-o}
holds trivially for $k= 0$ since $M_{0,1}(z)= 1$. Thus, we assume
$k\ge 1$. Then, we can rewrite \eqref{func-mean} into the following
form
\begin{equation}\label{equ-Mk1}
    M_{k,1}(z)
    = \int_{0}^{z}
	    2e^{\frac12u}M_{k-1,1}\lpa{\tfrac12u}\dd{u}
    = 2z\int_{0}^{1}
	    e^{\frac12tz}M_{k-1,1}\lpa{\tfrac12tz}\dd{t}.
\end{equation}
By the trivial bound $\mu_{n,k}\le 2n$, we get the \emph{a priori}
upper estimate $\vert M_{k,1}(z)\vert\le 2\vert z\vert e^{\vert
z\vert}$ (which also holds for $k= 0$). Plugging this into
\eqref{equ-Mk1} yields
\begin{align*}
    \vert M_{k,1}(z)\vert
	&\le 2\vert z\vert^2\int_{0}^{1}
	    te^{\frac12t(\Re(z)+\vert z\vert)}\dd{t}\\
    &\le 2\vert z\vert^2\int_{0}^{1}
        e^{\frac12t(\cos\ve+1)\vert z\vert}\dd{t}\\
    &\le \frac{4\vert z\vert}{\cos\ve+1}\,
        e^{\frac12(\cos\ve+1)\vert z\vert}.
\end{align*}
Since $\frac12<\frac12(\cos\ve+1)<1$, this proves \eqref{Mk1-o}.

Now we consider the sector $\vert\arg(z)\vert\le \ve$. The required
bound $\tilde{M}_{k,1}(z)= \mathcal{O}(|z|)$ will follow from
\eqref{exp-Mk1} and the smallness of $F(z)/z$ to be proved in
Proposition \ref{Th5} below (see also Remark~\ref{F-for-small-z}). It
is also possible to give a direct proof, although with a weaker
estimate (sufficient for our de-Poissonization purposes, however,
which would lead to weaker error terms in the subsequent estimates).
By
\eqref{equ-Mk1}
\[
    \tilde{M}_{k,1}(z)
    = 2z\int_{0}^{1}
	    e^{-(1-t)z}\tilde{M}_{k-1,1}\lpa{\tfrac12tz}\dd{t},
\]
and induction on $k$, we deduce the (slightly worse) bound
$\tilde{M}_{k,1}(z) = \mathcal{O}(\vert z\vert^{1+\ve'})$.
\end{proof}

By the asymptotic expansion \eqref{pc} and Lemma~\ref{JS-Mk1} (which
also gives bounds on the derivatives of $\tilde{M}_{k,1}(z)$ by
Ritt's theorem; see \cite{HwFuZa}), we can justify the ``Poisson
heuristic" for $\mu_{n,k}$ as follows.
\begin{lmm} For large $n$ and $0\le k\le n$
\begin{align*}
    \mu_{n,k}
    = \tilde{M}_{k,1}(n)
	    +\mathcal{O}(1),
\end{align*}
where the $\mathcal{O}$-term holds uniformly in $k$.
\end{lmm}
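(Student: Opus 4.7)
The plan is to apply the Poisson--Charlier expansion~\eqref{pc} with $\tilde{f}=\tilde{M}_{k,1}$, and to use the uniform JS-admissibility established in Lemma~\ref{JS-Mk1} to control the resulting series. Since $\tilde{M}_{k,1}\in\JS$, \eqref{pc} gives the identity
\[
    \mu_{n,k}
    =n![z^n]e^{z}\tilde{M}_{k,1}(z)
    =\sum_{j\ge 0}\frac{\tilde{M}_{k,1}^{(j)}(n)}{j!}\,\tau_j(n).
\]
Because $\tau_0(n)=1$ and $\tau_1(n)=0$, the $j=0$ term is exactly $\tilde{M}_{k,1}(n)$ and the $j=1$ term vanishes, so the task reduces to bounding the tail starting at $j=2$ uniformly in $k$.

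Next I would transfer the $\mathcal{O}(|z|)$ growth inside the sector $|\arg(z)|\le\ve$ to the derivatives via Ritt's theorem (i.e.\ Cauchy's integral formula over a small disk inside the admissibility cone). Because the implied constants in Lemma~\ref{JS-Mk1} are absolute and independent of $k$, this yields the uniform derivative estimate
\[
    \tilde{M}_{k,1}^{(j)}(n)
    =\mathcal{O}\lpa{n^{1-j}}
    \qquad(j\ge 0),
\]
with constants depending only on $j$. Combined with the standard bound $\tau_j(n)=\mathcal{O}(n^{\lfloor j/2\rfloor})$ on the Charlier polynomials, the generic summand is of order $n^{1-\lceil j/2\rceil}$, so the $j=2$ contribution is $\mathcal{O}(1)$ and each subsequent term is smaller by a factor of order $n^{-1/2}$.

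Finally, one must cut the series at some fixed index $J$ and dispose of the remainder. The finite part $2\le j\le J$ is controlled termwise by the bound above. For $j>J$ one invokes the outer condition (O) of JS-admissibility in the standard de-Poissonization manner: deforming the contour of $n![z^n]e^z\tilde{M}_{k,1}(z)$ to the circle $|z|=n$ and using the saddle-point estimate shows that the tail contributes $\mathcal{O}(n^{-J/2})$ for $J$ large enough, again uniformly in $k$ since the constants in Lemma~\ref{JS-Mk1} are absolute. Adding up, the full tail is $\mathcal{O}(1)$, which yields the claim.

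The main obstacle is the uniformity in $k$, as the level parameter may be as large as $n$; however, this is precisely what the uniform version of JS-admissibility in Lemma~\ref{JS-Mk1} was designed to furnish, so once that lemma is in hand the de-Poissonization proceeds by the general machinery of \cite{HwFuZa}. No new estimate on $F$ is required, since only abstract properties of $\tilde{M}_{k,1}$ enter.
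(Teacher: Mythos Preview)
Your proposal is correct and follows essentially the same approach as the paper: the paper's proof simply invokes the Poisson--Charlier expansion \eqref{pc} together with the uniform JS-admissibility of Lemma~\ref{JS-Mk1} (with Ritt's theorem supplying the derivative bounds), deferring the standard de-Poissonization machinery to \cite{HwFuZa}. You have merely spelled out the termwise estimates $\tilde{M}_{k,1}^{(j)}(n)=\mathcal{O}(n^{1-j})$ and $\tau_j(n)=\mathcal{O}(n^{\lfloor j/2\rfloor})$ and the tail handling in more detail than the paper does.
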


From this and \eqref{exp-Mk1}, we obtain now Theorem~\ref{thm-mean}
(except for the positivity of $F(x)$, which will be established
in Proposition~\ref{F-pos} below).

\subsection{Asymptotics of $F(z)$}\label{asymp-F}

In this subsection, we derive an asymptotic expansion for $F(z)$ for
small $|z|$ and prove the positivity of $F(x)$ on $\mathbb{R}^+$; the
corresponding large-$|z|$ asymptotics is much easier; see
\eqref{Fx-large}.

\begin{prop}\label{Th5}
For each integer $m\ge 0$, the $m$th derivative of $F$ satisfies
\begin{equation}\label{saddle-point-Fr}
    F^{(m)}(z)
    = \frac{\rho^{m+\frac12+\frac1{\log 2}}}
    {\sqrt{2\pi\log_2\rho}}\,
    \exp\Lpa{-\frac{(\log\rho)^2}{2\log 2}
    -P(\log_2\rho)}
    \lpa{1+\mathcal{O}\lpa{|\log\rho|^{-1}}},
\end{equation}
as $|z|\to0$ in the sector $|\arg(z)|\le \ve$, where $P(t)$ is given
in \eqref{Pt} and $\rho = \rho(z)$ solves the saddle-point equation
$
    \frac{\rho}{\log\rho}
	= \frac{1}{z\log 2},
$
and satisfies $|\rho|\to\infty$ as $|z|\to0$.
\end{prop}
\begin{proof}
By additivity, the Laplace transform of $F$ has the form
\[
    \mathscr{L}[F(z);s]
    = \sum_{j\ge 0}\frac{(-1)^j2^{-\binom{j}{2}}}
    {Q_jQ_{\infty}(s+2^j)}, \qquad (\Re(s)>-1),
\]
which equals the partial fraction expansion of the product
\begin{equation}\label{laplace-F}
    \mathscr{L}[F(z);s]
    = \prod_{j\ge 0}\frac{1}{1+2^{-j}s}
    = \frac{1}{Q(-2s)}.
\end{equation}

Since we are interested in the asymptotics of $F(z)$ as $|z|\to 0$,
which is reflected by the large-$|s|$ asymptotics of
$\mathscr{L}[F(z);s]$, we apply Mellin transform techniques for
that purpose; see Flajolet et al.'s survey paper \cite{FlGoDu} for
more background on tools and applications. In particular, taking the
logarithm on both sides of \eqref{laplace-F} and using the inverse
Mellin transform gives
\begin{align*}
    \log Q(-2s)
    = \sum_{j\ge 0}\log(1+2^{-j}s)
    = \frac{1}{2\pi i}
	\int_{-\tfrac12-i\infty}^{-\tfrac12+i\infty}
	    \frac{\pi s^{-\omega}}{(1-2^{\omega})
	    \omega\sin\pi\omega}\dd\omega,
\end{align*}
because the Mellin transform of $\log(1+s)$ equals
\[
    \int_{0}^{\infty}s^{\omega-1}\log(1+s)\dd{s}
    = \frac{\pi}{\omega\sin\pi\omega},
    \qquad (\Re(\omega)\in(-1,0)).
\]
Now, by standard Mellin analysis (see \cite{FlGoDu}), we deduce that
\begin{align}\label{Q2s-asymp}
    \log Q(-2s)
    = \frac{(\log s)^2}{2\log 2}+\frac{\log s}{2}
    +P(\log_2 s)+\mathcal{O}\lpa{\vert s\vert^{-1}},
\end{align}
uniformly as $\vert s\vert\to\infty$ in the sector $\vert\arg
s\vert\le \pi-\ve$.

Next, by the inverse Laplace transform, first for $z=x$ real, we have
\begin{equation}\label{inv-Lap-Fm}
    F^{(m)}(x)
    = \frac{1}{2\pi i}\int_{1-i\infty}^{1+i\infty}
    \frac{s^me^{xs}}{Q(-2s)}\dd{s}.
\end{equation}
It follows, by moving the line of integration to $\Re(s)= \rho$ and by
substituting the asymptotic approximation \eqref{Q2s-asymp}, that
\begin{align*}
    F^{(m)}(x)
    &= \frac1{2\pi i}\int_{\rho-i\infty}^{\rho+i\infty}
    s^m \exp\llpa{xs-\frac{(\log s)^2}{2\log 2}-\frac{\log s}{2}
    -P(\log_2 s)+\mathcal{O}\lpa{\vert s\vert^{-1}}}
    \dd s.
\end{align*}
A standard application of the saddle-point method (see \cite[Ch.\
VIII]{FlSe}) then yields \eqref{saddle-point-Fr} for real $z$ with
$z\to 0$.

When the imaginary part of $z$ is not zero, we can still apply the
same procedure but need to deform the integration contour in the
representation \eqref{inv-Lap-Fm} from the vertical line with real
part $1$ to the one where the two portions from $1+i$ to $1+i\infty$
and from $1-i\infty$ to $1-i$ are tilted slightly to the left; see
Appendix~\ref{A:A} for details.
\end{proof}

\begin{Rem}\label{F-for-small-z}
As a consequence of the above proposition, we see that $F(z)$ is
smaller than any polynomial of $z$ as $|z|\to 0$ in the sector
$\vert\arg(z)\vert\le \ve$.
\end{Rem}

Asymptotically, for large $X := \frac1{x\log 2}$, $x\in\mathbb{R}$,
\begin{align}
    \rho
    = X\Bigg(\log X+\log\log X+\frac{\log\log X}{\log X}
    -\frac{(\log\log X)^2-2\log\log X}
    {2(\log X)^2}+\mathcal{O}
    \llpa{\frac{(\log\log X)^3}{(\log X)^3}}\Bigg).
\end{align}
Substituting this into \eqref{saddle-point-Fr} gives the more explicit
expression \eqref{Fx-small}.

Finally, we prove the positivity of $F$ on the positive real line.
\begin{prop}\label{F-pos}
The function $F(x)$ is positive in $(0,\infty)$.
\end{prop}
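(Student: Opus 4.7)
My plan is to leverage the factored Laplace transform
\[
    \mathscr{L}[F(z);s]
    =\frac{1}{Q(-2s)}
    =\prod_{j\ge 0}\frac{1}{1+2^{-j}s}
\]
established in \eqref{laplace-F}. Each factor $(1+2^{-j}s)^{-1}$ is the Laplace transform of the exponential density $2^je^{-2^jx}$ (rate $2^j$), and the asymptotic \eqref{Fx-large} together with Proposition~\ref{Th5} show that $F\in L^1(\mathbb{R}^+)$. By uniqueness of Laplace transforms, $F$ is thus the density of $Y:=\sum_{j\ge 0}X_j$ for independent $X_j$, the series converging almost surely since $\mathbb{E}(Y)=\sum_{j\ge 0}2^{-j}=2$. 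In particular, $F\ge 0$ on $[0,\infty)$, and the remaining issue is to rule out the vanishing of $F$ at any point $x_0>0$.

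To that end I decompose $Y=X_0+Z$ with $Z:=\sum_{j\ge 1}X_j$ independent of $X_0$, and denote by $g$ the density of $Z$. The convolution identity gives
\[
    F(x)
    =2e^{-2x}\int_0^xe^{2u}g(u)\dd{u},
\]
so $F(x_0)=0$ would force $g\equiv 0$ almost everywhere on $(0,x_0)$, equivalently $\mathbb{P}(Z<x_0)=0$. The proof is therefore reduced to the anti-concentration estimate $\mathbb{P}(Z<\ve)>0$ for every $\ve>0$. This can be obtained by choosing positive $\ve_j$ with $\sum_{j\ge 1}\ve_j<\ve$ while $2^j\ve_j\to\infty$ (for instance $\ve_j=c\cdot 2^{-j/2}$ with $c$ sufficiently small) and invoking independence of the $X_j$:
\[
    \mathbb{P}(Z<\ve)
    \ge\prod_{j\ge 1}\mathbb{P}(X_j\le\ve_j)
    =\prod_{j\ge 1}\lpa{1-e^{-2^j\ve_j}}
    >0,
\]
the strict positivity of the product following from the convergence of $\sum_{j\ge 1}e^{-2^j\ve_j}$.

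I expect the last step to be the main obstacle. The probabilistic interpretation delivers $F\ge 0$ essentially for free, but strict positivity reduces to a delicate anti-concentration fact about $Z$ near the origin. One has to pick the $\ve_j$ small enough to keep $\sum\ve_j$ below $\ve$, yet with $2^j\ve_j$ large enough to keep $\prod(1-e^{-2^j\ve_j})$ bounded away from zero; the square-root-geometric choice $\ve_j=c\cdot 2^{-j/2}$ balances both constraints cleanly.
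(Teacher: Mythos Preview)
Your argument is correct and gives a genuinely different route from the paper's proof. The paper establishes $F\ge 0$ from the combinatorial fact $\mu_{n,k}\ge 0$ together with Theorem~\ref{thm-mean}, then derives from the Laplace transform the functional equation
\[
    F(x)+F'(x)=2F(2x),
\]
and argues by contradiction: a zero at $x_0$ forces $F'(x_0)=0$ (since $x_0$ is a minimum of a nonnegative function), hence $F(2x_0)=0$, and iteration produces arbitrarily large zeros, contradicting \eqref{Fx-large}. You instead read the factorization of $\mathscr{L}[F;s]$ probabilistically, identify $F$ as the density of an infinite sum of independent exponentials, and rule out zeros by an anti-concentration estimate for the tail $Z=\sum_{j\ge1}X_j$ near the origin. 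The paper's argument is shorter and avoids the product estimate; your argument buys a pleasant structural interpretation of $F$ and does not rely on the combinatorial nonnegativity of $\mu_{n,k}$.

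One small slip: with your indexing $X_0$ has rate $2^0=1$, so the convolution identity should read
\[
    F(x)=e^{-x}\int_0^x e^{u}\,g(u)\dd{u}
\]
rather than $2e^{-2x}\int_0^x e^{2u}g(u)\dd{u}$. This does not affect the argument, since either way $F(x_0)=0$ forces $g\equiv 0$ a.e.\ on $(0,x_0)$.
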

\begin{proof}
Since $\mu_{n,k}\ge 0$, we see, by Theorem \ref{thm-mean}, that
$F(x)\ge 0$ on $(0,\infty)$. Then, from \eqref{laplace-F},
\[
    (1+s)\mathscr{L}[F(x);s]
    = \prod_{j\ge 0}\frac{1}{1+2^{-j-1}s}
    = \mathscr{L}[F(x);\tfrac12s].
\]
The corresponding inverse Laplace transform yields the equation
$F(x)+F'(x)= 2F(2x)$. With this differential-functional equation, we
prove the positivity of $F$ by contradiction. Assume \emph{a
contrario} that $F(x)$ has a zero in $(0,\infty)$, say $x_0$. Then
$F'(x_0)= 2F(2x_0)\ge 0$ and since $F'(x_0)>0$ is not possible (for
otherwise $F(x)$ would become negative in a neighborhood of $x_0$),
we also have $F(2x_0)= 0$. Continuing this argument, we obtain
arbitrarily large zeros. This is, however, impossible since we see
from \eqref{Fx-large} that $F(x)$ is positive for all $x$ large
enough. \end{proof}

\section{The Variance of the External Profile}\label{var}

In this section, we prove Theorem \ref{thm-var} by the same approach
used above for the mean, starting from the second moment
$\nu_{n,k}:= \mathbb{E}(B_{n,k}^2)$, which satisfies, by
\eqref{dist-rec-exprof}, the recurrence
\[
    \nu_{n,k}
    = 2^{2-n}\sum_{0\le j<n}\binom{n-1}{j}\nu_{j,k-1}
    +2^{2-n}\sum_{0\le j<n}\binom{n-1}{j}\mu_{j,k-1}\mu_{n-1-j,k-1},
    \qquad (n,k\ge 1),
\]
with the boundary conditions $\nu_{0,0}= 1$, and $\nu_{n,0}=
\nu_{0,k}=0$ for $n,k\ge1$. Translating this recurrence into the
corresponding Poisson generating function
\[
    \tilde{M}_{k,2}(z)
    := e^{-z}\sum_{n\ge 0}\nu_{n,k}\frac{z^n}{n!},
	\qquad(k\ge0),
\]
leads to the differential-functional equation
\begin{equation}\label{diff-func-sm}
    \tilde{M}_{k,2}(z)+\tilde{M}'_{k,2}(z)
    = 2\tilde{M}_{k-1,2}\lpa{\tfrac12z}
    +2\tilde{M}_{k-1,1}\lpa{\tfrac12z}^2,\qquad (k\ge 1),
\end{equation}
with $\tilde{M}_{0,2}(z)= e^{-z}$.

Since the variance is expected to be of the same order as the mean
(notably when both tend to infinity), there is a cancellation between
the dominant term in the asymptotic expansion for $\nu_{n,k}$ and
that for $\mu_{n,k}^2$. Such a cancellation of dominant terms can be
more easily manipulated by considering the Poissonized variance (as
in \cite{FuHwZa,HwFuZa}):
\[
    \tilde{V}_k(z)
    = \tilde{M}_{k,2}(z)-\tilde{M}_{k,1}(z)^2
    -z\tilde{M}'_{k,1}(z)^2,
\]
which itself also satisfies, after a straightforward calculation,
\begin{equation}\label{diff-func-vk}
    \tilde{V}_k(z)+\tilde{V}'_k(z)
    = 2\tilde{V}_{k-1}\lpa{\tfrac12z}+z\tilde{M}''_{k,1}(z)^2,
    \qquad (k\ge 1),
\end{equation}
with $\tilde{V}_0(z)= e^{-z}-(1+z)e^{-2z}$. In this form, the
original inherent cancellation is nicely integrated into the same
type of equation with an explicitly computable non-homogeneous
function, and we need only to work out the asymptotics of
$\tilde{V}_k(z)$, which will be proved to be asymptotically
equivalent to the variance of $B_{n,k}$ in the major range of
interest.

\subsection{Exact Expressions}

To justify the cancellation-free approach for computing the asymptotic
variance, we still need more explicit expressions for
$\tilde{M}_{k,2}(z)$ and $\tilde{V}_k(z)$. For that purpose, we apply
the Laplace transform on both sides of \eqref{diff-func-sm} and obtain
\[
    \mathscr{L}[\tilde{M}_{k,2}(z);s]
    = \frac{4}{s+1}\,\mathscr{L}[\tilde{M}_{k-1,2}(z);2s]
    +\frac{4}{s+1}\,\mathscr{L}[\tilde{M}_{k-1,1}(z)^2;2s],
    \qquad (k\ge 1),
\]
with $\mathscr{L}[\tilde{M}_{0,2}(z);s]= \frac1{s+1}$. This is solved by iterating the recurrence, giving
\begin{align}\label{laplace-sec-mom}
    \mathscr{L}[\tilde{M}_{k,2}(z);s]
    = \frac{4^k}{(s+1)\cdots (2^ks+1)}
    +\sum_{0\le j< k}\frac{4^{k-j}
    \mathscr{L}[\tilde{M}_{j,1}(z)^2;2^{k-j}s]}
    {(s+1)\cdots(2^{k-j-1}s+1)}.
\end{align}
From \eqref{poi-mean}, a manageable expression for the Laplace
transform of $\tilde{M}_{j,1}(z)^2$ is given by
\[
    \mathscr{L}[\tilde{M}_{j,1}(z)^2;s]
    = 4^j\sum_{0\le h,\ell\le j}\frac{(-1)^{h+\ell}
    2^{-\binom{h}{2}-\binom{\ell}{2}}}
    {Q_hQ_{j-h}Q_{\ell}Q_{j-\ell}}\cdot
    \frac{1}{s+2^{h-j}+2^{\ell-j}}.
\]
This and the partial fraction expansion \eqref{lap-Mk1} yield
\begin{align*}
    &\sum_{0\le j<k}\frac{4^{k-j}
    \mathscr{L}[\tilde{M}_{j,1}(z)^2;2^{k-j}s]}
    {(s+1)\cdots(2^{k-j-1}s+1)}\\
    &= \sum_{(j,r,h,\ell)\in\mathscr{S}}
    \frac{2^{2j+1}(-1)^{r+h+\ell}
    2^{-\binom{r}{2}-\binom{h}{2}-\binom{\ell}{2}}}
    {Q_rQ_{k-1-j-r}Q_hQ_{j-h}Q_{\ell}Q_{j-\ell}}\cdot
    \frac{1}{(s+2^{r-k+1+j})(s+2^{h-k}+2^{\ell-k})},
\end{align*}
where
\[
    \mathscr{S}
    = \{(j,r,h,\ell)\ :\ 0\le j\le k-1, 0\le r\le k-1-j,
    0\le h,\ell\le j\}.
\]
From this and the expression
\[
    \frac{1}{(s+u)(s+v)}
    = \frac{1}{v-u}\left(\frac{1}{s+u}
    -\frac{1}{s+v}\right),\qquad (u\ne v),
\]
we obtain, by term-by-term inversion,
\[
    \tilde{M}_{k,2}(z)
    = \tilde{M}_{k,1}(z)+\sum_{(j,r,h,\ell)\in\mathscr{S}}
    \frac{2^{2j+1}(-1)^{r+h+\ell}
    2^{-\binom{r}{2}-\binom{h}{2}-\binom{\ell}{2}}}
    {Q_rQ_{k-1-j-r}Q_hQ_{j-h}Q_{\ell}Q_{j-\ell}}\,
    \phi(2^{r-k+1+j},2^{h-k}+2^{\ell-k};z),
\]
where
\begin{align*}
    \phi(u,v;z)
    &= e^{-vz}\int_{0}^{z}e^{-(u-v)t}\dd{t}
    = \begin{cases}
        {\displaystyle\frac{e^{-uz}-e^{-vz}}{v-u}},
        &\text{if}\ u\ne v;\\ze^{-uz},&\text{if}\ u= v.
    \end{cases}
\end{align*}
Taking the coefficients of $z^n$ on both sides, we are led to the
exact expression for $\nu_{n,k}$
\begin{equation}\label{ex-snk}
    \nu_{n,k}
    = \mu_{n,k}+\sum_{(j,r,h,\ell)\in\mathscr{S}}
    \frac{2^{2j+1}(-1)^{r+h+\ell}
    2^{-\binom{r}{2}-\binom{h}{2}-\binom{\ell}{2}}}
    {Q_rQ_{k-1-j-r}Q_hQ_{j-h}Q_{\ell}Q_{j-\ell}}\,
    \delta(2^{r-k+1+j},2^{h-k}+2^{\ell-k};n),
\end{equation}
where
\begin{align*}
    \delta(u,v;n)
	&= n\int_0^1(1-u-(v-u)t)^{n-1}\dd t
    = \begin{cases}
        {\displaystyle\frac{(1-u)^n-(1-v)^n}{v-u}},
        &\text{if}\ u\ne v;\\n(1-u)^{n-1},&\text{if}\ u= v.
    \end{cases}
\end{align*}

Similarly, by \eqref{diff-func-vk} and the same procedure, we also
have
\begin{equation}\label{ex-var}
    \tilde{V}_k(z)
    = \sum_{(j,r,h,\ell)\in\mathscr{V}}
    \frac{2^{k-j}(-1)^{r+h+\ell}
    2^{-\binom{r}{2}-\binom{h}{2}-\binom{\ell}{2}+2h+2\ell}}
    {Q_rQ_{k-j-r}Q_hQ_{j-h}Q_{\ell}Q_{j-\ell}}\,
    \varphi\lpa{2^{j+r},2^h+2^\ell;2^{-k}z},
\end{equation}
where
\[
    \mathscr{V}
    = \{(j,r,h,\ell)\ :\ 0\le j\le k,
    0\le r\le k-j, 0\le h,\ell\le j\},
\]
and $\varphi(u,v;z)$ is defined in \eqref{varphi}. Note that
the equality $2^{j+r}= 2^{h}+2^{\ell}$ occurs if and only if
$(j,r,h,\ell)$ belongs to the set
\[
    \{(j,r,h,\ell)\ :\ 1\le j\le k, r= 0,
    h= \ell= j-1\quad\text{or}\quad 0\le j<k, r= 1, h= \ell= j\},
\]
and the corresponding terms in \eqref{ex-var} are given by
\[
    \sum_{1\le j\le k}\frac{2^{-k-j}
    2^{-2\binom{j-1}{2}+4(j-1)}}{Q_{k-j}Q_1^2Q_{j-1}^2}
    \cdot\frac{z^2}{2}\,e^{-2^{j-k}z}-
    \sum_{0\le j<k}\frac{2^{-k-j}
    2^{-2\binom{j}{2}+4j}}{Q_{k-j-1}Q_1Q_j^2}
    \cdot\frac{z^2}{2}\,e^{-2^{j+1-k}z},
\]
which becomes zero since $Q_1= \tfrac12$. Hence, the equality part in
the definition \eqref{varphi} of $\varphi(u,v;z)$ may be ignored.

\subsection{Asymptotics of $\mathrm{Var}(B_{n,k})$}

The range where $2^{k}ne^{-2^{-k}n}\to 0$ can be treated
elementarily, as in the case of the mean. In this range of $k$ (and
even in the wider range where $2^{-k}n\to\infty$), we have
\begin{equation}\label{asymp-var-simp}
    \mathrm{Var}(B_{n,k})
    \sim{\frac{2^k}{Q_k}\bigl(1-2^{-k}\bigr)^n},
\end{equation}
uniformly in $k$. To prove this, we use \eqref{ex-snk} and
begin with the estimate
\[
    \delta(2^{r-k+1+j},2^{h-k}+2^{\ell-k};n)
    = \mathcal{O}\lpa{n\lpa{1-2^{1-k}}^n},
\]
where the implied constant is absolute in $n$ and in $k$.
Substituting this into \eqref{ex-snk} yields
\begin{align*}
    \nu_{n,k}
    &= \mu_{n,k}+\mathcal{O}\llpa{n\lpa{1-2^{1-k}}^{n}
    \sum_{(j,r,h,\ell)\in\mathscr{S}}2^{2j}
    2^{-\binom{r}{2}-\binom{h}{2}-\binom{\ell}{2}}}\\
    &= \mu_{n,k}+\mathcal{O}\lpa{4^kn\lpa{1-2^{1-k}}^{n}}.
\end{align*}
By the asymptotic estimate \eqref{asymp-mean-simp} for $\mu_{n,k}$,
we then have
\[
    \nu_{n,k}
    = \frac{2^k}{Q_k}\lpa{1-2^{-k}}^n
    \llpa{1+\mathcal{O}\llpa{e^{-
    \frac{n}{2^k-1}}+2^kn\frac{\lpa{1-2^{1-k}}^{n}}
    {\lpa{1-2^{-k}}^{n}}}}.
\]
From this estimate and \eqref{two-k-ratio}, we see that
\[
    \nu_{n,k}\sim\frac{2^k}{Q_k}\lpa{1-2^{-k}}^n
    \sim\mu_{n,k},
\]
because $2^{k}ne^{-2^{-k}n}\to 0$. Also, $\mu_{n,k}= o(1)$ in this
range. Thus, we get $\mu_{n,k}^2 = o(\mu_{n,k})$ and then
\eqref{asymp-var-simp}. Note that it is possible to extend slightly
the range to $2^{k} e^{-2^{-k}n}\to0$ because there is only one term
containing the factor $n(1-2^{1-k})^n$ in the sum \eqref{ex-snk}
(which is when $j= r= h= l= 0$), and the contribution of all other
terms is bounded above by $\mathcal{O}(4^k(1-2^{1-k})^n)$. Moreover,
that \eqref{asymp-var-simp} holds in the wider range $2^{-k}n
\to\infty$ follows from a refinement of the expansion of Theorem
\ref{thm-var}, which can in turn be obtained by the analytic method
below.

On the other hand, complex analytic tools apply in a wider range. In
contrast to the mean, however, we do not prove an identity for
$\tilde{V}_k(z)$ (compare with Lemma~\ref{id-poi-mean} and see
Remark~\ref{id-var}) but we directly prove an asymptotic result
similar to the one for the mean in \eqref{exp-Mk1}.

\begin{lmm}
For $\vert\arg(z)\vert \le \varepsilon$ and $k\ge 0$, $\tilde{V}_k(z)$
satisfies the expansion
\begin{equation}\label{result-pois-mod}
	\tilde{V}_k(z)= 2^kG\lpa{2^{-k}z}+\mathcal{O}(1),
\end{equation}
where $G(z)$ is defined in Theorem~\ref{thm-var}.
\end{lmm}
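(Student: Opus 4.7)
The plan is to compare the exact finite-sum expression \eqref{ex-var} for $\tilde V_k(z)$ directly with the infinite series for $2^kG(2^{-k}z)$ obtained from \eqref{G}. After substituting $x=2^{-k}z$ in the definition of $G$ and multiplying by $2^k$, both expressions share the same kernel $\varphi(2^{r+j},2^h+2^\ell;2^{-k}z)$ and the same coefficient $2^{k-j}(-1)^{r+h+\ell}2^{-\binom{r}{2}-\binom{h}{2}-\binom{\ell}{2}+2h+2\ell}/(Q_rQ_hQ_{j-h}Q_\ell Q_{j-\ell})$; the only structural discrepancies are that $\tilde V_k(z)$ is summed over the finite index set $\mathscr{V}$ while $G$ ranges over all $j,r\ge 0$ (keeping $0\le h,\ell\le j$), and that $\tilde V_k(z)$ carries the extra factor $1/Q_{k-j-r}$ where $G$ carries $1/Q_\infty$. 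Accordingly, I would split $\tilde V_k(z)-2^kG(2^{-k}z)$ into an \emph{inner correction} summed over $(j,r,h,\ell)\in\mathscr{V}$ and weighted by $1/Q_{k-j-r}-1/Q_\infty$, and a \emph{tail} summed over the complement of $\mathscr{V}$ (with $0\le h,\ell\le j$) and weighted by $-1/Q_\infty$, and show that each piece is $\mathcal{O}(1)$ uniformly in $k$ and in $z$ with $|\arg z|\le\ve$.

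For the inner correction, an Euler-type identity (exactly as in the proof of Lemma~\ref{id-poi-mean}) gives
\[
    \frac{1}{Q_{k-j-r}}-\frac{1}{Q_\infty}
    = \frac{1}{Q_\infty}\llpa{\prod_{m\ge 1}\lpa{1-2^{-(k-j-r+m)}}-1}
    = \mathcal{O}\lpa{2^{-(k-j-r)}},
\]
uniformly in $j,r$ with $j+r\le k$. This extra geometric factor, multiplied by a uniform bound on $|\varphi(2^{r+j},2^h+2^\ell;2^{-k}z)|$ and the super-geometrically decaying combinatorial weights $2^{-\binom{r}{2}-\binom{h}{2}-\binom{\ell}{2}}$, yields an $\mathcal{O}(1)$ total by direct summation. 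For the tail, $(j,r,h,\ell)\notin\mathscr{V}$ forces $r+j>k$, so the argument $2^{r+j}\cdot 2^{-k}z$ inside $\varphi$ grows geometrically in $r+j-k$; the resulting exponential decay $e^{-2^{r+j-k}z}$ (in the regime $\Re(z)>0$ enforced by $|\arg z|\le\ve$), combined with the super-exponential decay of $2^{-\binom{r}{2}}$, renders the tail summable.

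The main obstacle is to produce a single uniform estimate on $|\varphi(u,v;2^{-k}z)|$ with $u=2^{r+j}$ and $v=2^h+2^\ell$ that is valid simultaneously across the three regimes $2^{-k}|z|\to 0$, $2^{-k}|z|=\Theta(1)$ and $2^{-k}|z|\to\infty$, and in particular near the degenerate locus $u=v$ at which the $(u-v)^{2}$ denominator in the first branch of \eqref{varphi} vanishes. Fortunately, it was already observed right after \eqref{ex-var} that the degenerate contributions cancel out because $Q_1=\tfrac12$, so one may restrict to $u\ne v$ and establish the required bound by a short case analysis on the sign of $\Re\lpa{(u-v)\cdot 2^{-k}z}$ and on which of $u$ or $v$ dominates. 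With these uniform estimates in hand, both pieces of the splitting are $\mathcal{O}(1)$ and \eqref{result-pois-mod} follows.
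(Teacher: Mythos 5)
Your decomposition of $\tilde V_k(z)-2^kG(2^{-k}z)$ into an inner correction weighted by $1/Q_{k-j-r}-1/Q_\infty$ and a tail over the complement of $\mathscr{V}$ is essentially the paper's move: the paper writes $1/Q_{k-j-r}=(Q_\infty/Q_{k-j-r})/Q_\infty$ and uses $Q_\infty/Q_{k-j-r}=1+\mathcal{O}(2^{j+r-k})$ \emph{with the convention that the ratio vanishes for $j+r>k$}, so their single $\mathcal{O}$-error already contains both your pieces. So the strategy is the same; the issue is in the execution.

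The gap is in your treatment of the inner correction. After using $1/Q_{k-j-r}-1/Q_\infty=\mathcal{O}(2^{-(k-j-r)})$, the prefactor $2^{k-j}$ from $\tilde V_k$ cancels against $2^{-(k-j-r)}$ to leave $2^r$, with \emph{no remaining decay in $j$}. The combinatorial weights $2^{-\binom r2-\binom h2-\binom\ell2+2h+2\ell}$ also carry no $j$-decay: for each fixed $j$, $\sum_{0\le h,\ell\le j}2^{-\binom h2-\binom\ell2+2h+2\ell}$ is bounded below by a positive constant independent of $j$. Hence with only a uniform $\mathcal{O}(1)$ bound on $|\varphi|$, ``direct summation'' over $(j,r,h,\ell)\in\mathscr{V}$ gives $\mathcal{O}(k)$, not $\mathcal{O}(1)$. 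The missing ingredient is that the $\varphi$-factor itself must supply the $j$-decay. The paper obtains this via a two-case bound: $2^{j+r}\varphi(2^{r+j},2^h+2^\ell;2^{-k}z)=\mathcal{O}(1)$ when $h,\ell\le\lfloor j/2\rfloor$ (so $u=2^{r+j}$ dominates $v=2^h+2^\ell$ by a factor $\gtrsim 2^{j/2}$ and the $(u-v)^{-2}$ factor in $\varphi$ forces decay like $2^{-j-r}$), and only $\varphi=\mathcal{O}(1)$ otherwise — in which event the constraint $h>j/2$ or $\ell>j/2$ makes $2^{-\binom h2}$ or $2^{-\binom\ell2}$ super-exponentially small in $j$, restoring convergence. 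Your third paragraph hints at a case analysis ``on which of $u$ or $v$ dominates,'' but as written you do not articulate that the bound on $\varphi$ must yield $\mathcal{O}(2^{-(j+r)})$ (at least when $u\gg v$) for the $j$-sum over $[0,k]$ to converge, and the claim that the inner piece is $\mathcal{O}(1)$ ``by direct summation'' does not hold without it. (As a minor second point, your heuristic for the tail via the exponential factor $e^{-2^{r+j-k}z}$ fails when $|z|$ is small; the correct reason the tail is $\mathcal{O}(1)$ uniformly in $z$ is the combinatorial decay $2^{k-j-\binom r2}$ along with $r+j>k$.)
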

\pf Similar to Lemma~\ref{id-poi-mean}, we first consider
$Q_{\infty}/Q_{k-j-r}$, which satisfies the uniform bound
\begin{equation}\label{first-order}
	\frac{Q_{\infty}}{Q_{k-j-r}}
	= \prod_{\ell\ge 1}\lpa{1-2^{j+r-k-\ell}}
	= 1+\mathcal{O}\lpa{2^{j+r-k}}.
\end{equation}
Here the product also makes sense for $j+r>k$ where it becomes zero
and thus the bound also holds in this case. Substituting this into
\eqref{ex-var} gives
\begin{align*}
	\tilde{V}_k(z)
	= 2^kG\lpa{2^{-k}z}
	+\sum_{j,r\ge 0}\sum_{0 \le h,\ell \le j}
	\frac{2^{-j-\binom{r}{2}-\binom{h}{2}
	-\binom{\ell}{2}+2h+2\ell}}
	{Q_{\infty}Q_rQ_hQ_{j-h}Q_{\ell}Q_{j-\ell}}
	\,\mathcal{O}\lpa{2^{j+r}
	\varphi\lpa{2^{j+r},2^{h}+2^{\ell};2^{-k}z}}.
\end{align*}

To estimate the double sum, we split the summation range into two:
(\emph{i}) $h,\ell \le \tr{j/2}$ and (\emph{ii}) either $h>\tr{j/2}$
or $\ell>\tr{j/2}$, and denote the resulting sums by $E_1(z)$ and
$E_2(z)$, respectively. By the estimates
\[
	2^{j+r}\varphi
	\lpa{2^{j+r},2^{h}+2^{\ell};2^{-k}z}
    = \begin{cases}
        \mathcal{O}(1), &\text{if }h,\ell \le \tr{j/2},\\
        \mathcal{O}(2^{j+r}), &\text{if }h>\tr{j/2}
        \text{ or }\ell>\tr{j/2},
    \end{cases}
\]
where the implied constants are both absolute, we have
\[
	E_1(z)
	= \mathcal{O}\llpa{\sum_{j,r\ge 0}
	\sum_{0 \le h,\ell \le j/2}
	2^{-j-\binom{r}{2}-\binom{h}{2}-\binom{\ell}{2}+2h+2\ell}}
	= \mathcal{O}(1),
\]
and
\begin{align*}
	E_2(z) &= \mathcal{O}\left(\sum_{j,r\ge 0}
	\sum_{\substack{0 \le h,\ell \le j\\
	h>j/2\ \text{or}\ \ell>j/2}}
	2^{-\binom{r}{2}-\binom{h}{2}
	-\binom{\ell}{2}+r+2h+2\ell}\right)\\
	&= \mathcal{O}\llpa{\sum_{j\ge 1}j^{-1}
	2^{-\frac{1}{8}j^2+\frac{5}{4}j}}
	= \mathcal{O}(1).
\end{align*}
This proves the claimed expansion.\qed

\begin{Rem}\label{id-var}
Comparing with Lemma~\ref{id-poi-mean}, it would be natural to derive
an identity for $\tilde{V}_k(z)$ in a way similar to that for
$\tilde{M}_{k,1}(z)$ (see Lemma~\ref{id-poi-mean}) by replacing the
first order asymptotics \eqref{first-order} by the full expansion
\[
    \frac{Q_{\infty}}{Q_{k-j-r}}
    = \prod_{\ell\ge 1}\lpa{1-2^{j+r-k-\ell}}
    = \sum_{m\ge 0}\frac{(-1)^m2^{-\binom{m+1}{2}}}
    {Q_m}\,2^{(j+r-k)m},
\]
which is zero for $j+r>k$. However, doing so yields an expression
that is no more absolutely convergent, as pointed out by one referee.

Nevertheless, ignoring the convergence issue and carrying out all
computations formally, we can expand $\tilde{V}_k(z)$ as
\begin{equation}\label{ex-var-2}
     \tilde{V}_k(z)
    = 2^k\sum_{m\ge 0}\frac{2^{-\binom{m+1}{2}-mk}}{Q_m}\,
    H_m(2^{-k}z),
\end{equation}
where $H_m(z)$ are suitable functions. Then, a formal calculation of
the Laplace transform gives (after a lengthy computation)
\begin{equation}\label{lap-H}
    \mathscr{L}\left[H_m(z);s\right]
    = s^m\sum_{j\ge 0}4^{-j}
    \frac{\tilde{g}_j^{*}\lpa{2^{-j}s}}{Q\lpa{-2^{1-j}s}},
\end{equation}
where $\tilde{g}_j^{*}(s)= \mathscr{L}[z(\tilde{M}''_{j,1}(z))^2;s]$.
Similarly,
\begin{equation}\label{lap-G-1}
    \mathscr{L}\left[G^{(m)}(z);s\right]
    = s^m\sum_{j\ge 0}4^{-j}
    \frac{\tilde{g}_j^{*}\lpa{2^{-j}s}}{Q\lpa{-2^{1-j}s}},
\end{equation}
where this relation indeed holds not just formally but also in a
rigorous analytic sense; see Appendix~\ref{A:B} for a proof.
This will be needed in the next subsection.

Now, \eqref{lap-H} and \eqref{lap-G-1} suggest that $H_m(z)=
G^{(m)}(z)$ for $m\ge 0$, which in turn suggests the validity of the
following identity
\[
     \tilde{V}_k(z)
     = 2^k\sum_{m\ge 0}
     \frac{2^{-\binom{m+1}{2}-km}}{Q_m}\,
     G^{(m)}\lpa{2^{-k}z}.
\]
This identity was claimed in the conference version of this paper,
but is not needed for our purposes here.
\end{Rem}

Note that \eqref{result-pois-mod} gives the version of
\eqref{Var-Bnk} under the Poisson model. From this we deduce
Theorem~\ref{Var-Bnk} by the same approaches used to prove
Theorem~\ref{thm-mean}, namely, de-Poissonization techniques through
the use of JS-admissible functions.
\begin{lmm}
The functions $\tilde{M}_{k,2}(z)$ are uniformly JS-admissible. More
precisely, if $\vert z\vert\ge 1$, then for $\vert\arg(z)\vert\le \ve$
\begin{equation}\label{bound-Mk2}
    \tilde{M}_{k,2}(z)
    = \mathcal{O}\lpa{\vert z\vert^{2}},
\end{equation}
and for $\ve\le \vert\arg(z)\vert\le \pi$
\[
    e^z\tilde{M}_{k,2}(z)
    = \mathcal{O}\lpa{e^{(1-\ve')\vert z\vert}},
\]
where all implied constants in the $\mathcal{O}$-terms are absolute
for $k\ge0$.
\end{lmm}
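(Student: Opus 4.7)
The plan is to mirror the proof of Lemma~\ref{JS-Mk1} for the first moment. Setting $M_{k,2}(z):=e^{z}\tilde{M}_{k,2}(z)$ and integrating the differential-functional equation \eqref{diff-func-sm} from $0$ to $z$ (which is justified since $\nu_{0,k}=0$ for $k\ge 1$, so $M_{k,2}(0)=0$), one gets
\[
    M_{k,2}(z) = 2z\int_{0}^{1}\Lpa{e^{tz/2}M_{k-1,2}(tz/2) + M_{k-1,1}(tz/2)^{2}}\dd{t}, \qquad (k\ge 1),
\]
with $M_{0,2}(z)=1$. I would then handle the two sectors separately, exactly as in the first-moment case.

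\textbf{Sector $\varepsilon\le|\arg z|\le\pi$.} Start from the trivial \emph{a priori} bound $\nu_{n,k}\le(n+1)^{2}$, which holds because $B_{n,k}\le n+1$, and which gives $|M_{k,2}(z)|\le C(1+|z|^{2})e^{|z|}$ uniformly in $k$. Plugging this in along with the already-established bound $M_{k-1,1}(w)^{2}=\mathcal{O}\lpa{e^{(1-\varepsilon')|w|}}$ from Lemma~\ref{JS-Mk1}, and using $\Re(z)\le|z|\cos\varepsilon$ when $|\arg z|\ge\varepsilon$, leads to
\[
    |M_{k,2}(z)| \le 2|z|\int_{0}^{1}\Lpa{C(1+t^{2}|z|^{2})e^{t(1+\cos\varepsilon)|z|/2} + D\,e^{(1-\varepsilon')t|z|}}\dd{t}.
\]
Since $(1+\cos\varepsilon)/2<1$, absorbing the polynomial prefactor into a slight weakening of the exponential rate (valid for $|z|\ge 1$) yields $M_{k,2}(z)=\mathcal{O}\lpa{e^{(1-\varepsilon'')|z|}}$ with $\varepsilon''>0$ independent of $k$.

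\textbf{Sector $|\arg z|\le\varepsilon$.} Rewriting the integral representation directly for $\tilde{M}_{k,2}$,
\[
    \tilde{M}_{k,2}(z) = 2z\int_{0}^{1}e^{-(1-t)z}\Lpa{\tilde{M}_{k-1,2}(tz/2) + \tilde{M}_{k-1,1}(tz/2)^{2}}\dd{t},
\]
I would induct on $k$. Using Lemma~\ref{JS-Mk1} to estimate $\tilde{M}_{k-1,1}(tz/2)^{2}=\mathcal{O}(t^{2}|z|^{2})$, the inductive hypothesis $|\tilde{M}_{k-1,2}(tz/2)|\le C_{k-1}(t|z|/2)^{2}$, and the Watson-type bound $\int_{0}^{1}t^{2}e^{-(1-t)\Re z}\dd t\le 1/\Re z\le 1/(|z|\cos\varepsilon)$, one arrives at $|\tilde{M}_{k,2}(z)|\le\frac{C_{k-1}+D}{2\cos\varepsilon}|z|^{2}$. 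Choosing $\varepsilon$ small enough that $\cos\varepsilon>\tfrac12$, the recursion $C_{k}\le(C_{k-1}+D)/(2\cos\varepsilon)$ has a finite fixed point, which closes the induction with a bound $\tilde{M}_{k,2}(z)=\mathcal{O}(|z|^{2})$ uniform in $k$.

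\textbf{Main obstacle.} The delicate point is the uniformity in $k$ for the inner sector. The induction only closes because the halving factor $(1/2)^{\alpha}$ picked up when passing from $z$ to $tz/2$ combines with the $1/|z|$ decay of the Watson integral and the $2|z|$ prefactor to produce a multiplicative constant strictly below $1$. For the first moment (target exponent $\alpha=1$) this trade-off fails, which is exactly why Lemma~\ref{JS-Mk1} only gives the slightly weaker exponent $1+\varepsilon'$; for the second moment, the exponent $\alpha=2$ yields a genuine factor $1/2$, leaving just enough slack to absorb the $1/\cos\varepsilon$ loss and obtain the sharp bound $\mathcal{O}(|z|^{2})$.
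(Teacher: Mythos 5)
Your proof is correct in its essentials and is, in spirit, the argument the paper outsources: the paper's proof simply invokes \cite[Prop.\ 2.4]{HwFuZa} (closure of JS-admissibility under the operation $\tilde f + \tilde f' = 2\tilde f(z/2) + \tilde g$), noting that "the same proof" gives the uniformity in $k$, while also pointing out that the bound alternatively follows from \eqref{result-pois-mod} together with properties of $G$ and $\tilde{M}_{k,1}$. You have reconstructed that outsourced induction explicitly, mirroring the proof of Lemma~\ref{JS-Mk1}, and your observation about why the exponent $\alpha=2$ yields a genuine contraction factor $2^{1-\alpha}/\cos\ve<1$ whereas $\alpha=1$ does not (thus explaining why the direct route in Lemma~\ref{JS-Mk1} only gave $\mathcal{O}(|z|^{1+\ve'})$) is a nice addition that the paper does not state.

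One point needs patching in your inner-sector induction. You feed the hypothesis $|\tilde M_{k-1,2}(w)|\le C_{k-1}|w|^2$ into the integral at $w=tz/2$ for \emph{all} $t\in[0,1]$, but this bound cannot hold uniformly down to $w=0$: for example $\tilde M_{1,2}(w)=4we^{-w}\sim 4w$, which is not $\mathcal{O}(|w|^2)$ near the origin, and in general the lemma is only asserted for $|w|\ge 1$. The fix is mechanical: split the $t$-integral at $t_0=\min(1,2/|z|)$. On $[0,t_0]$ use the $k$-uniform \emph{a priori} bound $|\tilde M_{k-1,2}(w)|=\mathcal{O}(1)$ for $|w|\le 1$ (coming from $\nu_{n,k}\le (n+1)^2$), together with the exponential damping $e^{-(1-t)\Re z}\le e^{-(1-t_0)|z|\cos\ve}$, to see that this portion contributes only $\mathcal{O}(1)$. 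On $[t_0,1]$ the inductive bound applies as you wrote it, and the Watson estimate $\int_{t_0}^1 t^2e^{-(1-t)\Re z}\,\dd t\le 1/\Re z$ then gives exactly your recursion $C_k\le (C_{k-1}+D)/(2\cos\ve)+\mathcal{O}(1)$, which still has a finite fixed point once $\cos\ve>\tfrac12$. With that split inserted, your argument is complete.
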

\begin{proof}
We proved in \cite[Prop. 2.4]{HwFuZa} that if $\tilde{g}(z)$ is
JS-admissible, then $\tilde{f}(z)$ with $\tilde{f}(0)= 0$ and
\[
    \tilde{f}(z)+\tilde{f}'(z)
    = 2\tilde{f}\lpa{\tfrac12z}+\tilde{g}(z),
\]
is also JS-admissible. Since $2\tilde{M}_{k-1,1}\lpa{\tfrac12z}^2$ is
uniformly JS-admissible (by Lemma \ref{JS-Mk1}), the same property
holds for $\tilde{M}_{k,2}(z)$ by the same proof of \cite[Prop.\
2.4]{HwFuZa}.
\end{proof}

We are now ready to prove Theorem~\ref{thm-var}. Since
$\tilde{M}_{k,2}\in\JS$, we have the expansion
\[
    \nu_{n,k}= \sum_{0\le j\le 2}
    \frac{\tilde{M}_{k,2}^{(j)}(n)}{j!}\, \tau_j(n)
    + \mathcal{O}\lpa{1},
\]
where we retain the terms from \eqref{pc} with $j\le 2$ so as to
guarantee that the error term is $\mathcal{O}\lpa{1}$ (here we used
estimates for the derivative of $\tilde{M}_{k,2}(z)$ which follow
from \eqref{bound-Mk2} and Ritt's theorem to bound the error term).
For $\mu_{n,k}$, we need an expansion with an error up to
$\mathcal{O}\lpa{n^{-1}}$:
\begin{align*}
    \mu_{n,k} &= \sum_{0\le j\le 2}
    \frac{\tilde{M}_{k,1}^{(j)}(n)}{j!}\, \tau_j(n)
    + \mathcal{O}\lpa{n^{-1}},
\end{align*}
so that $\mu_{n,k}^2$ is correct up to an error of order
$\mathcal{O}\lpa{1}$. Then we obtain
\begin{equation}\label{exp-var}
    \mathrm{Var}(B_{n,k})
    = \nu_{n,k}-\mu_{n,k}^2
    = \tilde{V}_{k}(n)-\frac{n}{2}\tilde{V}''_k(n)+\mathcal{O}\lpa{1},
\end{equation}
where we have used the relation $\tilde{M}_{k,2}(n)= \tilde{V}_k(n)
+\tilde{M}_{k,1}(n)^2+n\tilde{M}'_{k,1}(n)^2$. Now, from
\eqref{result-pois-mod}, we have
\[
    \tilde{V}''_k(n)
	= 2^{-k}G''(2^{-k}n)+\mathcal{O}\lpa{n^{-2}}
	= \mathcal{O}\lpa{n^{-1}},
\]
where the last estimate follows from properties of $G(z)$; see the
next subsection. Plugging this and the expansion from
\eqref{result-pois-mod} into \eqref{exp-var} gives the approximation
in Theorem \ref{thm-var}.

\subsection{Asymptotics of $G(z)$}\label{sec-Gx}

We now derive the asymptotic behaviors of $G$ for small and large
$|z|$, and prove that $G(x)$ is positive for $x\in(0,\infty)$. In
particular, the asymptotic approximations of $G$ will imply
\eqref{asymp-rel-FG}.

First, the asymptotics of $G(z)$ for large $z$ follows directly from
the defining series \eqref{G}:
\[
    G(z)
    = \frac{e^{-z}}{Q_{\infty}}
	+\mathcal{O}\lpa{|z|e^{-2\Re(z)}},
\]
for $\Re(z)>0$; see \eqref{Gx-large} which is the real-valued version
of this asymptotics. Note that similar expansions can also be derived
for the derivatives of $G(z)$.

In contrast, the small-$|z|$ asymptotics of $G$ which we now examine
turns out to be very involved. For the sake of simplicity, we
restrict ourselves only to the real case since this suffices for our
applications.
\begin{prop}\label{Th7}
For each integer $m\ge 0$, $G^{(m)}$ satisfies the asymptotic estimate
\[
    G^{(m)}(x)
    \sim 2F^{(m)}(x),
\]
as $x\to0$.
\end{prop}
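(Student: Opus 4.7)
The plan is to carry out a saddle-point analysis parallel to that in the proof of Proposition~\ref{Th5}, this time applied to the Laplace transform representation~\eqref{lap-G-1}. Using the factorization $Q(-2s)/Q(-2^{1-j}s) = \prod_{m=0}^{j-1}(1+2^{-m}s)$ together with the observation that $F$ and all its derivatives vanish at the origin (by Proposition~\ref{Th5}, or directly from Euler's identity applied to~\eqref{F-sum}), so that $\mathscr{L}[F^{(m)}(z);s] = s^m/Q(-2s)$, one can rewrite
\[
    \mathscr{L}[G^{(m)}(z);s] = \frac{s^m A(s)}{Q(-2s)},\qquad
    A(s) := \sum_{j\ge 0} 4^{-j}\tilde{g}_j^*(2^{-j}s)\prod_{m=0}^{j-1}\lpa{1+2^{-m}s}.
\]
If $A(s) = 2 + o(1)$ uniformly enough as $|s|\to\infty$ in a sector $|\arg s|\le \pi-\varepsilon$, then subtracting the two inverse Laplace transforms gives
\[
    G^{(m)}(z) - 2F^{(m)}(z) = \frac{1}{2\pi i}\int_{\rho-i\infty}^{\rho+i\infty}\frac{s^m e^{zs}\,(A(s)-2)}{Q(-2s)}\dd s = o\lpa{F^{(m)}(z)},
\]
by repeating the saddle-point argument of Proposition~\ref{Th5} with the harmless extra factor $A(s)-2$ near the saddle point~$\rho$.

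The central step is therefore to show $A(s)\to 2$. Expanding each summand via the partial fraction $4^{-j}\tilde{g}_j^*(2^{-j}s) = \sum_{0\le h,\ell\le j} A_{h,\ell}^{(j)}/(s+2^h+2^\ell)^2$ (coming from the explicit series for $\tilde{M}_{j,1}''$) and Taylor-expanding $\prod_{m=0}^{j-1}(1+2^{-m}s)$ for large~$s$, the leading $s^{j-2}$ coefficient of the $j$-th summand is proportional to
\[
    \LLpa{\sum_{i=0}^{j}\frac{(-1)^i 2^{-\binom{i}{2}+2i}}{Q_iQ_{j-i}}}^2 = 4^j\lpa{\mu_{2,j}-2\mu_{1,j}+\mu_{0,j}}^2 = 4^j\lpa{\tilde{M}_{j,1}''(0)}^2.
\]
Since $\mu_{n,j}=0$ whenever $n<j$, this quantity vanishes for $j\ge 3$, so the only nonzero constant contribution comes from $j=2$, and a short calculation (with $\mu_{0,2}=\mu_{1,2}=0$, $\mu_{2,2}=2$) shows it equals exactly~$2$. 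Successive subleading coefficients for $j\ge 3$ involve higher Poisson derivatives $\tilde{M}_{j,1}^{(k)}(0) = \sum_n (-1)^{k-n}\binom{k}{n}\mu_{n,j}$, which vanish whenever $k<j$ by the same reason, so each individual $A_j(s)$ with $j\ge 3$ decays like $|s|^{-(j-2)}$ at infinity.

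The main obstacle is the uniform control of the tail as $j\to\infty$: although every individual summand decays, the associated constants involve $\tilde{M}_{j,1}^{(j)}(0)=\mu_{j,j}$, which grows roughly like $2^j F(1)$, so the series is never absolutely convergent uniformly in~$s$, and only the interplay between this growth and the $s$-dependent decay keeps the sum bounded. This is precisely the role of Lemma~\ref{LeG2} referred to in the Introduction: for an appropriate cutoff $j_0\sim c\log_2|s|$ it supplies the uniform estimates needed to truncate the sum at $j_0$ and control the tail $j\ge j_0$, yielding $A(s)=2+o(1)$ uniformly in the sector. Once this uniformity is available, the saddle-point computation of Proposition~\ref{Th5}, together with the same contour deformation used there to cover the complex sector $|\arg z|\le\varepsilon$, goes through essentially unchanged with $A(s)-2$ acting as a small multiplicative perturbation, and yields $G^{(m)}(z)\sim 2F^{(m)}(z)$ as $|z|\to 0$.
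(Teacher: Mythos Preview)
Your proposal is correct and follows essentially the same route as the paper: factor the Laplace transform as $\mathscr{L}[G;s]=A(s)/Q(-2s)$, show $A(s)=2+\mathcal{O}(|s|^{-1})$ by combining the vanishing of $\tilde M_{j,1}^{(k)}(0)$ for $k<j$ (the content of Lemma~\ref{LeG1}) with the uniform tail estimate of Lemma~\ref{LeG2}, and then rerun the saddle-point argument of Proposition~\ref{Th5}. The only cosmetic difference is that you package things via $A(s)$ while the paper works with the summands $R_j(s)=A_j(s)/Q(-2s)$ directly (Lemma~\ref{LeG3}); your phrase ``the leading $s^{j-2}$ coefficient'' is a bit loose, but the underlying observation---that $z(\tilde M_{j,1}''(z))^2$ has a zero of order $2j-3$ at the origin, forcing $A_j(s)=\mathcal{O}(|s|^{2-j})$---is exactly what the paper uses.
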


The proof of this proposition is long and technical and relies mostly
on the Laplace transform. Note that since the Laplace transform of
$G^{(m)}$ is just $s^m$ times the Laplace transform of $G$, it
suffices to consider only the case $m= 0$.

We start from the Laplace transform of $G(x)$, which, by
\eqref{lap-G-1}, is given by
\begin{equation}\label{lap-G}
    \mathscr{L}[G(x);s]
    = \sum_{j\ge 0} R_j(s), \quad\text{where}\quad
    R_j(s):= \frac{\tilde{g}_j^*\lpa{2^{-j}s}}
	{4^jQ\lpa{-2^{1-j}s}}.
\end{equation}
Here $\tilde{g}_j^{*}(s)= \mathscr{L}[z(\tilde{M}''_{j,1}(z))^2;s]$,
which, by \eqref{poi-mean} and a straightforward computation, has the
form
\begin{equation*}
    \tilde{g}_j^{*}(s)
    = 4^{-j}\sum_{0\le h,\ell\le j}\frac{(-1)^{h+\ell}
    2^{-\binom{h}{2}-\binom{\ell}{2}+2h+2\ell}}
    {Q_hQ_{j-h}Q_{\ell}Q_{j-\ell}}
    \cdot\frac{1}{(s+2^{h-j}+2^{\ell-j})^2};
\end{equation*}
see also Appendix~\ref{A:B}.

Surprisingly, the dominating term in
\eqref{lap-G} is $R_2(s)\sim 2/Q(-2s)$ for large $|s|$, and the hard
part of the analysis consists in showing that $\sum_{j\ne2}R_j(s) =
\mathcal{O}(|R_2(s)/s|)$.
\begin{lmm}\label{LeG1}
For large $|s|$ in the half-plane $\Re(s)>0$, $R_0$ and $R_1$ satisfy
\begin{align}\label{j01}
    R_0(s)
    = \frac{s^{-2}}{Q(-2s)}(1+\mathcal{O}(|s|^{-1}))
    \quad\text{and}\quad
    R_1(s)
    = \frac{9\,s^{-1}}{Q(-2s)}(1+\mathcal{O}(|s|^{-1})),
\end{align}
respectively, and $R_j$ with fixed $j\ge 2$ satisfies
\begin{equation}\label{eqLeG1}
    R_j(s)
    = \frac{(2j-3)!2^{\binom{j}2}}{((j-2)!)^2}\cdot
	\frac{s^{2-j}}{Q(-2s)}
    \lpa{1 + \mathcal{O}\lpa{|s|^{-1}}}.
\end{equation}
\end{lmm}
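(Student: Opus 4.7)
The plan is to substitute the explicit formula for $\tilde{g}_j^*$ into the definition of $R_j$, expand the resulting double sum in inverse powers of $s$, and exploit a cancellation stemming from the $q$-binomial identity to identify the leading order for each $j$. Writing
\[
S_j(s) := \sum_{0\le h,\ell\le j} \frac{(-1)^{h+\ell}
2^{-\binom{h}{2}-\binom{\ell}{2}+2h+2\ell}}{Q_h Q_{j-h} Q_\ell Q_{j-\ell}}
\cdot \frac{1}{(s+2^h+2^\ell)^2},
\]
one checks that $R_j(s) = S_j(s)/\lpa{4^j Q(-2^{1-j}s)}$. A telescoping computation gives the ratio identity $Q(-2s)/Q(-2^{1-j}s) = \prod_{\ell=1}^{j}(1+2^{1-\ell}s)$, a polynomial of degree $j$ with leading coefficient $2^{-\binom{j}{2}}$, so that for large $|s|$ in $\Re(s)\ge 0$ (away from the poles of $Q(-2^{1-j}\cdot)$, which lie in $\Re(s)<0$),
\[
\frac{1}{Q(-2^{1-j}s)} = \frac{s^j\, 2^{-\binom{j}{2}}}{Q(-2s)}\lpa{1+\mathcal{O}(|s|^{-1})}.
\]
It thus suffices to determine the leading term of $S_j(s)$ as $|s|\to\infty$.

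Expanding $(s+2^h+2^\ell)^{-2} = s^{-2}\sum_{n\ge 0}(n+1)(-1)^n(2^h+2^\ell)^n s^{-n}$ and the inner binomial $(2^h+2^\ell)^n=\sum_{m}\binom{n}{m}2^{hm}2^{\ell(n-m)}$ decouples $S_j(s)$ as
\[
S_j(s) = \sum_{n\ge 0}(n+1)(-1)^n T_j^{(n)}s^{-n-2},
\qquad
T_j^{(n)} := \sum_{m=0}^{n}\binom{n}{m} A_j^{(m)} A_j^{(n-m)},
\]
where $A_j^{(m)} := \sum_{h=0}^{j} \frac{(-1)^h 2^{-\binom{h}{2}+(m+2)h}}{Q_h Q_{j-h}}$. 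The finite Euler identity (the $q=\tfrac12$ case already invoked in Lemma~\ref{id-poi-mean}) gives the closed form $A_j^{(m)} = Q_j^{-1}\prod_{k=0}^{j-1}(1-2^{m+2-k})$. The decisive observation is that the factor with $k=m+2$ equals $1-1=0$; hence $A_j^{(m)}=0$ for every $m$ in $\{0,1,\dots,j-3\}$ when $j\ge 3$. For $j\ge 2$ the first nonzero entry is
\[
A_j^{(j-2)} = \frac{1}{Q_j}\prod_{k=1}^{j}(1-2^{k}) = (-1)^j\, 2^{j(j+1)/2},
\]
using $\prod_{k=1}^{j}(2^k-1) = Q_j\cdot 2^{j(j+1)/2}$.

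Consequently, for $j\ge 2$, the smallest $n$ with $T_j^{(n)}\ne 0$ is $n=2j-4$, where $T_j^{(2j-4)} = \binom{2j-4}{j-2}\lpa{A_j^{(j-2)}}^2 = \binom{2j-4}{j-2}\,2^{j(j+1)}$. This yields $S_j(s) = \frac{(2j-3)!\,2^{j(j+1)}}{((j-2)!)^2}\,s^{2-2j}\lpa{1+\mathcal{O}(|s|^{-1})}$; multiplying by $s^j\,2^{-\binom{j}{2}}/(4^j Q(-2s))$ and simplifying the exponent $j(j+1)-\binom{j}{2}-2j = \binom{j}{2}$ produces \eqref{eqLeG1}. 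For $j=0$ and $j=1$ the leading term already appears at $n=0$: with $A_0^{(0)}=1$ and $A_1^{(0)}=Q_1^{-1}(1-4)=-6$ one gets $T_0^{(0)}=1$, $T_1^{(0)}=36$, and the same $Q$-ratio manipulation gives \eqref{j01}.

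The main obstacle is recognizing and quantifying the cancellation mechanism: a naive inspection of $S_j(s)$ would predict order $s^{-2}$ for every $j$, but the $q$-binomial product forces the first $j-2$ candidate coefficients to vanish identically for $j\ge 3$, and only the careful evaluation of $A_j^{(j-2)}$ together with the convolution prefactor $(2j-3)\binom{2j-4}{j-2}=(2j-3)!/((j-2)!)^2$ reveals the actual order $s^{2-j}$. The error $1+\mathcal{O}(|s|^{-1})$ is routine, since the next nonzero $A_j^{(m)}$ is bounded and contributes a correction of relative order $s^{-1}$, and the polynomial $\prod_{\ell=1}^{j}(1+2^{1-\ell}s)$ has its subleading coefficient precisely one power of $s$ below its leading one.
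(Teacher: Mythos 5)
Your proof is correct, and it follows a genuinely different route from the paper. The paper treats the cases $j=0,1$ by direct closed-form inspection and then, for $j\ge 2$, argues in the ``time domain'': it observes that $\tilde g_j^*(2^{-j}s)$ is the Laplace transform of $4^j z\,\tilde M_{j,1}''(2^jz)^2$, reads off the small-$|z|$ behavior $\tilde M_{j,1}(z)\sim 2^{-j(j-3)/2}z^j/j!$ from the transform via Ritt's theorem, and transfers this back to the large-$|s|$ asymptotics of the transform. The order-$j$ vanishing of $\tilde M_{j,1}$ at $z=0$ is what drives the surprising order $s^{2-2j}$. You instead work entirely in the ``frequency domain'': expand $(s+2^h+2^\ell)^{-2}$ in powers of $1/s$, decouple the double sum via $A_j^{(m)}$, and evaluate $A_j^{(m)}=Q_j^{-1}\prod_{k=0}^{j-1}(1-2^{m+2-k})$ by the finite $q$-binomial theorem (Gauss's formula with $q=\tfrac12$); the vanishing of the $k=m+2$ factor for $0\le m\le j-3$ makes the cancellation completely explicit, and the leading nonzero coefficient $A_j^{(j-2)}=(-1)^j 2^{j(j+1)/2}$ gives the constant. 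The two mechanisms are really two faces of the same fact --- your vanishing of $A_j^{(m)}$ is exactly the vanishing of the first $j$ Taylor coefficients of $\tilde M_{j,1}$ at $0$ --- but your version is more elementary and self-contained (no appeal to Ritt's theorem or to the Abelian correspondence between small-$z$ and large-$s$ behavior), at the cost of a more computational derivation. One small inaccuracy in attribution: the identity invoked in Lemma~\ref{id-poi-mean} is Euler's \emph{infinite} product identity, whereas you need the \emph{finite} $q$-binomial theorem; these are distinct (though both classical), so the phrase ``already invoked'' is slightly misleading but does not affect the correctness. You might also state explicitly that the termwise expansion in $1/s$ is legitimate because the sum over $(h,\ell)$ is finite for each fixed $j$, so absolute convergence holds once $|s|>2^{j+1}$.
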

\begin{proof}
The estimates \eqref{j01} for $j= 0$ and $j= 1$ follow from the
closed-form expressions
\[
    \tilde{g}_0^*(s)
    = \frac 1{(s+2)^2} \quad\mbox{and}\quad
    \tilde{g}_1^*(2^{-1}s)
    = \frac 4{(s+2)^2}-\frac {32}{(s+3)^2}+\frac {64}{(s+4)^2},
\]
and the functional relation $Q(-2s) = (1+s)Q(-s)$. Thus we assume now
$j\ge 2$.

Since $\tilde{g}_j^*\lpa{2^{-j}s}$ is the Laplace transform of $4^j z
\tilde M_{j,1}''(2^j z)^2$, we see that the large-$|s|$ behavior of
the former is reflected from the small-$|z|$ behavior of the latter.
Starting from \eqref{lap-Mk1-prod} using Ritt's theorem for the
asymptotics of the derivatives of an analytic function (see \cite[\S
VI. 10.1]{FlSe}), we obtain successively the estimates in the
following table.

\renewcommand\arraystretch{1.4}
\begin{center}
\begin{tabular}{c|ccc}
    $f(z)$
    & $\tilde{M}_{j,1}(z)$
    & $\tilde{M}_{j,1}''(z)$
    & $4^j z \tilde M_{j,1}''(2^j z)^2$ \\ \hline
    as $|z|\sim 0$
    & $\frac{z^j}{j!2^{j(j-3)/2}}$
    & $\frac{z^{j-2}}{(j-2)!2^{j(j-3)/2}}$
    & $\frac{2^{j(j+1)} }{((j-2)!)^2}\, z^{2j-3}$
	\\ [3pt]\hline \hline
    $\mathscr{L}[f(z);s]$
    & $\frac{4^j}{\prod_{0\le \ell \le j}(2^\ell s +1)}$
    & $\frac{4^js^2}{\prod_{0\le \ell \le j}(2^\ell s +1)}$
    & $\tilde{g}_j^*\lpa{2^{-j}s}$ \\[3pt] \hline
    as $|s|\to\infty$
    & $2^{-j(j-3)/2}\, s^{-j-1}$
    & $2^{-j(j-3)/2}\, s^{-j+1}$
    & $\frac{(2j-3)! }{((j-2)!)^2}\,
	2^{j(j+1)} s^{-2j+2}$
\end{tabular}
\end{center}
where the entries in the second and the fourth rows give the
asymptotics of $f$ and its Laplace transform as $|z|\to0$ and
$|s|\to\infty$, respectively. All error terms are of the form
$1+\mathcal{O}(|z|)$ and $1+\mathcal{O}(|s|^{-1})$, respectively,

From this table and the estimate
\begin{align}
    Q(-2^{1-j}s)
    &= \frac{Q(-2s)}{(1+s)(1+2^{-1}s)
	\cdots (1+2^{-(j-1)}s) }\nonumber\\
    &   = s^{-j} Q(-2s)
	2^{\binom{j}{2}}\lpa{1+\mathcal{O}(|s|^{-1})},
	\label{eqLeG1-2}
\end{align}
for large $|s|$, we obtain \eqref{eqLeG1}.
\end{proof}
The error term in \eqref{eqLeG1} can be further refined
\[
    R_j(s)
    = \frac{(2j-3)!2^{\binom{j}2}}{((j-2)!)^2}\cdot
	\frac{s^{2-j}}{Q(-2s)}
    \Lpa{1 -\frac{7\cdot 2^j-3}{s}
	+ \mathcal{O}\lpa{4^j|s|^{-2}}},
\]
where the $O$-term being uniform for $j=O(\log|s|)$. From this, we
expect that, for large $j$, the bound
\[
	R_j(s)
	=\mathcal{O} \Lpa{\frac{\sqrt{j}\,2^{j(j+3)/2}|s|^{-j+2}}
	{|Q(-2s)|}}
\]
holds uniformly for $j=\mathcal{O}(\log|s|)$. We give a crude version
of this in the following lemma, which also incorporates the
dependence of $j$ on $|s|$.

\begin{lmm}\label{LeG2}
Uniformly for large $|s|$ in the half-plane $\Re(s)>0$,
\begin{equation}\label{eqLeG2}
    \tilde{g}_j^*\lpa{2^{-j}s}
    = \begin{cases}
    \mathcal{O}\Lpa{\frac{j|s|^5}{\sqrt{\log_2|s|}}
    \,2^{-(\log_2|s|)^2}}, &
    \mbox{if $j\ge\log_2|s|-1$}; \\
    \mathcal{O}\lpa{\sqrt{j}\,2^{j^2+3j} |s|^{-2j+2}},
    & \mbox{if $2\le j\le \log_2|s|-1$.}
\end{cases}
\end{equation}
\end{lmm}
\begin{proof}
For notational convenience, we write the Laplace transform of $f$ as
$f^*$ and that of the product $f_1f_2$ as the convolution
\[
    (f_1^{*}\star f_2^{*})(s) := (f_1\cdot f_2)^*(s) =
    \frac 1{2\pi i} \int_{\frac12s-i\infty}^{\frac12s+i\infty}
    f_1^*(t)f_2^{*}(s-t)\dd{t}.
\]
Since $\tilde{g}_j^*\lpa{2^{-j}s}= \mathscr{L}[4^jz \tilde
M_{j,1}''(2^j z)^2;s]$, we see that, by the relation
$\mathscr{L}[zf(z);s]= -f^*(s)'$,
\[
    \tilde{g}_j^*\lpa{2^{-j}s}
    = -(L_j\star L_j)'(s) = - (L_j\star L_j')(s),
\]
where, by \eqref{lap-Mk1-prod},
\begin{align}\label{Ljs}
    L_j(s)
    := \mathscr{L}[2^j\tilde M_{j,1}''(2^jz);s]
    = \frac{s^2}{\prod_{0\le \ell \le j} \lpa{1+ 2^{-\ell}s}}.
\end{align}
On the other hand, since
\[
    L_j'(s)
    = L_j(s) \llpa{ \frac 2s - \sum_{0\le \ell \le j}
    \frac 1{s+2^{\ell}}},
\]
we derive an upper bound for each of the convolutions
\[
    \Lpa{L_j\star
    \frac{L_j}{s}}(s)\qquad\text{and}\qquad
    \Lpa{L_j\star\frac{L_j}{s+2^\ell}}(s),
    \quad\text{for}\quad 0\le \ell \le j.
\]

Note that if both $f_1^{*}(s), f_2^{*}(s) = \mathcal{O}(|s|^{-1})$ for
large $|s|$, then we can change the integration path as
\[
    (f_1^{*}\star f_2^{*})(s)
    =\frac 1{2\pi i} \int_{\gamma(s)}
    f_1^{*}(t)f_2^{*}(s-t)\dd{t},
\]
where $\gamma(s) := \bigl\{ \tfrac12s\,( 1 +iv) : -\infty < v <
\infty\bigr\}$ is the symmetry line between $0$ and $s$; such a
choice implies $|t| = |s-t|$ for $t\in \gamma(s)$ and simplifies our
analysis.

If $|s|\ge 2^{j+1}$, then $2^{-\ell-1}|s|\ge1$ for $0\le \ell \le j$,
and we have, for $0\le y\le 2^j$,
\begin{align*}
    \frac 1{2\pi i} \int_{\gamma(s)}
    \frac{L_j(t)L_j(s-t)}{t+y}\dd{t}
    = \frac{(s/2)^5}{2\pi} \int_{-\infty}^{\infty}
	\frac{(1+v^2)^2}{\frac s2(1+iv)+y}
	\prod_{0\le \ell \le j}
	\frac1{1+2^{-\ell}s + 4^{-\ell-1}s^2(1+v^2)}\dd v.
\end{align*}
By the upper bound (since $2^{-\ell-1}|s|\ge1$)
\[
	\prod_{0\le \ell \le j}
	\frac1{1+2^{-\ell}s + 4^{-\ell-1}s^2(1+v^2)}
	=\mathcal{O}\left(2^{(j+1)(j+2)}
	|s|^{-2j-2}(1+v^2)^{-j-1} \right),
\]
we then obtain
\begin{align*}
	\left|\Lpa{L_j \star \frac{L_j}{s+y}}(s)\right|
	= \mathcal{O}\left(2^{j^2+3j}|s|^{-2j+2}
	\int_0^\infty (1+v^2)^{-j+1/2} \dd v\right)
	= \mathcal{O}\lpa{j^{-1/2}2^{j^2+3j} |s|^{-2j+2}}.
\end{align*}
Thus, summing over $y=0$ and $y=2^\ell$, $\ell=0,1,\dots,j$, we
have
\[
	\tilde{g}_j^*\lpa{2^{-j}s}
	= \mathcal{O}\lpa{\sqrt{j}\,2^{j^2+3j} |s|^{-2j+2}},
\]
when $|s|\ge 2^{j+1}$, which gives \eqref{eqLeG2} in the small $j$
case.
	
On the other hand, when $1\le |s|\le 2^{j+1}$, or $j\ge \log_2|s|-1$,
we have the bounds
\[
    \prod_{0\le \ell \le j} \lpa{1+ 2^{-\ell}s}
    = \frac{Q(-2s)}{\prod_{\ell>j}\lpa{1+2^{-\ell}s}}
    = \Omega(|Q(-2s)|),
\]
because
$$
    \prod_{\ell>j}\lpa{1+2^{-\ell}s}
    = \mathcal{O}\llpa{\prod_{\ell\ge j+1}
	\lpa{1+2^{-\ell+j+1}}}
	= \mathcal{O}(1).
$$
It follows that, by \eqref{Q2s-asymp}, uniformly as
$|s|\to\infty$ in $|\arg s|\le\pi-\ve$ and $j\ge\log_2|s|-1$,
\begin{align}\label{Ljt-ae}
    L_j(s)
	= \mathcal{O}\lpa{|s|^{3/2} e^{-\Re(\log s)^2/(2\log 2)}}
    = \mathcal{O}\lpa{|s|^{3/2} 2^{-\Re(\log_2 s)^2/2}}.
\end{align}
Consequently,
\begin{align*}
	\frac 1{2\pi i} \int_{\gamma(s)}
	\frac{L_j(t)L_j(s-t)}{t+y}\dd{t}
	&= \mathcal{O}\left(|s|^3
    2^{-\Re(\log_2(s/2)^2)}
    \int_0^\infty (1+v^2)^{3/2-\Re(\log_2(s/2))}
	2^{-\frac14\log_2(1+v^2)^2} \dd v\right)\\
    &= \mathcal{O}\left(\frac{|s|^5 2^{-(\log_2|s|)^2}}
    {\sqrt{\log_2|s|}}\right).
\end{align*}
The first estimate in \eqref{eqLeG2} then follows.
\end{proof}

We now derive a precise asymptotics for $\mathscr{L}[G(z);s]$ for
large $|s|$.

\begin{lmm}\label{LeG3}
The Laplace transform of $G$ satisfies
\begin{equation*}
    \mathscr{L}[G(z);s]
    = \frac 2{Q(-2s)} \lpa{1
    +\mathcal{O}\lpa{|s|^{-1}}},
\end{equation*}
as $|s|\to\infty$ in the half-plane $\Re(s)>0$.
\end{lmm}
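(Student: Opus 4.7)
My plan starts from the decomposition $\mathscr{L}[G(z);s] = \sum_{j\ge 0} R_j(s)$ given by \eqref{lap-G}. Lemma~\ref{LeG1} immediately supplies the main contribution from $j=2$, namely $R_2(s) = \frac{2}{Q(-2s)}(1+\mathcal{O}(|s|^{-1}))$, as well as the bounds $R_0(s), R_1(s) = \mathcal{O}(|s|^{-1}/Q(-2s))$. It therefore suffices to prove
\[
    \sum_{j\ge 3} R_j(s) = \mathcal{O}\lpa{|s|^{-1}/Q(-2s)},
\]
uniformly for $\Re(s) \ge \ve$ as $|s| \to \infty$, and this is precisely the task for which Lemma~\ref{LeG2} is tailored.

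I would split the tail at $j_0 := \lceil \log_2|s|\rceil$. For the \emph{middle range} $3 \le j < j_0$, the second case of Lemma~\ref{LeG2} applies, giving $\tilde g_j^*(2^{-j}s) = \mathcal{O}(2^{j^2+3j}|s|^{-2j+2})$. Combining this with the factorization $1/Q(-2^{1-j}s) = \prod_{\ell=0}^{j-1}(1+2^{-\ell}s)/Q(-2s)$ (a consequence of $Q(-2x) = (1+x)Q(-x)$) and the bound $\prod_{\ell=0}^{j-1}|1+2^{-\ell}s| \le 2^j|s|^j 2^{-\binom{j}{2}}$ valid when $|s| \ge 2^j$, one arrives at
\[
    R_j(s) = \mathcal{O}\llpa{\frac{|s|^{-j+2}\, 2^{j^2/2+5j/2}}{|Q(-2s)|}}.
\]
The log of the numerator is convex in $j$ with minimum near $\log_2|s|-5/2$; hence on $[3,j_0-1]$ it is maximized at the endpoint $j=3$, contributing $\mathcal{O}(|s|^{-1}/|Q(-2s)|)$. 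The geometric decay away from $j=3$ is quantified by $R_{3+k}/R_3 \le 2^{k^2/2+11k/2}|s|^{-k} \le 2^{-k}$ whenever $k \le 2\log_2|s|-13$, an interval covering the entire middle range once $|s|$ is sufficiently large. Summing then yields the desired bound.

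For the \emph{tail} $j \ge j_0$, I apply the first case of Lemma~\ref{LeG2}: $\tilde g_j^*(2^{-j}s) = \mathcal{O}(\frac{j}{\log_2|s|}\, 2^{-(\log_2|s|)(\log_2|s|-5)})$. Since $|2^{1-j}s| \le 2$ and $\Re(s) \ge \ve$, $Q(-2^{1-j}s)$ is bounded below uniformly, while $4^{-j}$ provides geometric decay; the tail therefore contributes at most $\mathcal{O}(|s|^{-2}\log_2|s| \cdot 2^{-(\log_2|s|)^2 + 5\log_2|s|})$. In view of \eqref{Q2s-asymp}, according to which $|Q(-2s)|$ grows like $\exp((\log|s|)^2/(2\log 2))$, this bound is superpolynomially smaller than the target $|s|^{-1}/Q(-2s)$.

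The main obstacle is the middle range: the superexponential factor $2^{j^2/2+5j/2}$ appearing after invoking Lemma~\ref{LeG2} must be tamed by $|s|^{-j+2}$ all the way up to $j = j_0-1$. The log-convexity of the summand, together with the hard cutoff $j < \log_2|s|$, is what forces the sum to be controlled by its value at $j = 3$ modulo an exponentially decaying factor, giving the final estimate.
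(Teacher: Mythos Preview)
Your proof is correct and follows essentially the same strategy as the paper: both split the sum $\sum_{j\ge 3}R_j(s)$ near $j\approx\log_2|s|$ and apply the two cases of Lemma~\ref{LeG2} to the respective pieces, with Lemma~\ref{LeG1} supplying the main term from $R_0+R_1+R_2$. One minor point: at $j=j_0-1$ the hypothesis $|s|\ge 2^{j+1}$ for the second case of Lemma~\ref{LeG2} can fail when $\log_2|s|\notin\mathbb{Z}$, but this single term is covered by the first-case bound and hence by your tail estimate (the paper's own split has the same harmless off-by-one ambiguity).
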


\begin{proof}
By Lemma~\ref{LeG1}, we have ($R_j(s)$ being defined in \eqref{lap-G})
\[
    R_0(s) + R_1(s) + R_2(s)
    = \frac 2{Q(-2s)} \lpa{1 +
    \mathcal{O}\lpa{|s|^{-1}}}.
\]
For the remaining terms, we examine the factor $Q\lpa{-2^{1-j}s}$.
By \eqref{eqLeG1-2}, we have, uniformly in the half plane
$\Re(s)>0$,
\[
    Q\lpa{-2^{1-j}s}= \begin{cases}
        \Omega(1), &\mbox{if } j\ge\log_2|s|-1;  \\
        \Omega\lpa{|s|^{-j}|Q(-2s)| 2^{\binom{j}{2}}},
        & \mbox{if } 0\le j\le \log_2|s|.
    \end{cases}
\]
Now, it follows from Lemma~\ref{LeG2} that
    \begin{align*}
    \sum_{3\le j \le \log_2|s|-1} R_j(s)
    &= \mathcal{O}\llpa{\frac 1{|Q(-2s)|}
    \sum_{3\le j \le \log_2|s|-1}
    \sqrt{j}\,2^{\frac12j(j+3) -(j-2)\log_2|s| }}\\
    &= \mathcal{O}\llpa{\frac 1{|s\, Q(-2s)|}};
\end{align*}
also by Lemma~\ref{LeG2} and \eqref{Q2s-asymp}
\begin{align*}
    \sum_{j \ge \log_2|s|-1} R_j(s)
    &= \mathcal{O}\LLpa{
    \sum_{j \ge \log_2|s|-1}\frac{j|s|^5}{\sqrt{\log_2|s|}}\,
    2^{-(\log_2|s|)^2-2j}}\\
    &= \mathcal{O}\lpa{\sqrt{\log_2|s|}\,|s|^3
    2^{-(\log_2|s|)^2}}\\
    &= \mathcal{O}\left(\frac{\sqrt{\log_2|s|}\,
    |s|^{7/2}2^{-(\log_2|s|)^2/2}}
    {|Q(-2s)|}\right)\\
    &= \mathcal{O}\left(\frac 1{|s\, Q(-2s)|}\right),
\end{align*}
which completes the proof of the lemma.
\end{proof}

\noindent\emph{Proof of Proposition \ref{Th7}.} Proposition~\ref{Th7}
now follows from Lemma~\ref{LeG3} because $\frac1{Q(-2s)}$ is the
Laplace transform of $F(z)$, details being similar to the proof of
Proposition~\ref{Th5} (for the asymptotics of $F(z)$). \qed

Finally, we prove that $G(x)$ is a positive function.
\begin{prop}
The function $G(x)$ is positive on $(0,\infty)$.
\end{prop}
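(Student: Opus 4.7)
The plan is to express $G$ as a convergent series of manifestly non-negative functions with at least one strictly positive summand, bypassing the functional-equation iteration used in Proposition~\ref{F-pos}.

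Starting from the Laplace representation \eqref{lap-G}, $\mathscr{L}[G;s] = \sum_{j\ge 0}R_j(s)$ with $R_j(s) = \tilde g_j^*(2^{-j}s)/(4^j Q(-2^{1-j}s))$, I would apply the dilation rule $\mathscr{L}[f(\alpha u);s]=\alpha^{-1}\mathscr{L}[f;\alpha^{-1}s]$ to $\tilde g_j^* = \mathscr{L}[u\tilde M''_{j,1}(u)^2;\cdot]$ and to $\mathscr{L}[F;\cdot] = 1/Q(-2\cdot)$ to rewrite
\[
\frac{\tilde g_j^*(2^{-j}s)}{4^j} = \mathscr{L}\bigl[u\tilde M''_{j,1}(2^j u)^2;s\bigr],\qquad \frac{1}{Q(-2^{1-j}s)} = 2^j\,\mathscr{L}\bigl[F(2^j u);s\bigr].
\]
Each $R_j$ is then the Laplace transform of a convolution of two non-negative functions, and inverting term by term gives
\[
G(x) = \sum_{j\ge 0} G_j(x),\qquad G_j(x) = 2^j\!\int_0^x u\,\bigl(\tilde M''_{j,1}(2^j u)\bigr)^2\,F\bigl(2^j(x-u)\bigr)\dd{u}.
\]
Every integrand is non-negative (a square times $F$, which is positive by Proposition~\ref{F-pos}), so $G_j\ge 0$ for all $j$. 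For $j=0$, the explicit form $\tilde M_{0,1}(z)=e^{-z}$ gives $G_0(x) = \int_0^x u e^{-2u}F(x-u)\dd{u}$, whose integrand is strictly positive on $(0,x)$, hence $G_0(x)>0$ for every $x>0$. Combining these, $G(x) \ge G_0(x) > 0$ on $(0,\infty)$.

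The main obstacle will be justifying the term-by-term Laplace inversion. The cleanest route is via Tonelli/monotone convergence on the real axis: since $R_j(s) = \mathscr{L}[G_j;s]$ with $G_j\ge 0$ and $s>0$ real, the identity $\mathscr{L}[G;s] = \sum_{j\ge 0}R_j(s)$ lifts to the pointwise identity $G(x) = \sum_{j\ge 0}G_j(x)$ on $(0,\infty)$ by uniqueness of the Laplace transform on the cone of non-negative functions. Alternatively, the bounds in Lemmas~\ref{LeG1}--\ref{LeG3} provide absolute and uniform convergence of $\sum_j R_j(s)$ along any contour $\Re(s)=c>0$, which legitimizes a direct Bromwich-integral exchange. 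Either way, the positivity of $G$ on $(0,\infty)$ reduces to that of $G_0$, which is immediate.
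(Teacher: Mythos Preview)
Your argument is correct and is essentially the paper's own proof: the paper inverts \eqref{lap-G} to obtain the same convolution representation
\[
    G(x)=\sum_{j\ge 0}2^j\int_{0}^{x}(x-t)\,\tilde{M}''_{j,1}\bigl(2^j(x-t)\bigr)^2 F(2^jt)\dd{t},
\]
which is your formula after the change of variable $u=x-t$, and then concludes positivity from the positivity of $F$. Your write-up is slightly more explicit in isolating the strictly positive $j=0$ term and in discussing the justification of the term-by-term inversion, but the route is the same.
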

\begin{proof}
By \eqref{lap-G} and the inverse Laplace transform, we see that
\[
    G(x)
    = \sum_{j\ge 0}2^j\int_{0}^{x}(x-t)
    \tilde{M}''_{j,1}(2^j(x-t))^2F(2^jt)\dd{t}.
\]
Since $F(x)$ is positive on $(0,\infty)$, we then deduce that $G(x)$
is also positive on $(0,\infty)$.\end{proof}

\section{Internal Profile}
\label{int-profile}

In this section, we present the results without proofs for the
internal profile $I_{n,k}$ because all proofs used for the external
profile extend to those for the internal profile, which satisfies the
same form of recurrence as $B_{n,k}$, namely,
\begin{equation*}
    I_{n,k}
    \stackrel{d}{= }I_{J_n,k-1}+I_{n-1-J_n,k-1}^{*},
    \qquad (n,k\ge 1),
\end{equation*}
where the notation is as in \eqref{dist-rec-exprof}. The only
differences lie in the boundary conditions: $I_{0,k}= 0$ for
$k\ge 0$ and $I_{n,0}= 1$ for $n\ge 1$.

From this recurrence and the same method used in Section \ref{mean},
we can derive the following (mostly known) result for the mean; see
\cite{DrSz,Lo}.
\begin{thm}
The expected internal profile satisfies
\[
    \mathbb{E}(I_{n,k})
    = 2^kF_I\lpa{2^{-k}n}+\mathcal{O}(1),
\]
uniformly for $0\le k<n$, where $F_I(x)$ equals the following antiderivative of
$F(x)$:
\[
    F_I(x)= 1-\sum_{j\ge 0}
    \frac{(-1)^j2^{-\binom{j+1}{2}}}
    {Q_jQ_{\infty}}\,e^{-2^jx}.
\]
Moreover, $F_I(x)$ is a positive function on $\mathbb{R}^+$; see
Figure \ref{plot-FI} for a plot.
\end{thm}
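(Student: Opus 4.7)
The plan is to mirror the analysis of Section~\ref{mean}, applied to $\eta_{n,k}:=\mathbb{E}(I_{n,k})$ with the only change coming from the boundary conditions $I_{0,k}=0$ ($k\ge 0$) and $I_{n,0}=1$ ($n\ge 1$). Writing $\tilde N_{k,1}(z) := e^{-z}\sum_{n\ge 0}\eta_{n,k}z^n/n!$, the same binomial recurrence yields the differential-functional equation
\[
    \tilde N_{k,1}(z)+\tilde N_{k,1}'(z)
    =2\tilde N_{k-1,1}\lpa{\tfrac12 z},\qquad(k\ge 1),
\]
but now with initial condition $\tilde N_{0,1}(z)=1-e^{-z}$ (whereas $\tilde M_{0,1}(z)=e^{-z}$). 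Applying the Laplace transform and iterating gives
\[
    \mathscr{L}[\tilde N_{k,1}(z);s]
    =\frac{2^k}{s(s+1)(2s+1)\cdots(2^ks+1)}
    =\frac{1}{2^ks}\,\mathscr{L}[\tilde M_{k,1}(z);s].
\]
Inverting this elementary relation produces the integral identity
\[
    \tilde N_{k,1}(z)=\frac{1}{2^k}\int_0^z \tilde M_{k,1}(u)\dd u,
\]
which reduces the mean internal profile to an integration of what we already know for the external profile.

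Next I would substitute the identity from Lemma~\ref{id-poi-mean} into this integral and integrate term by term. The $m=0$ term contributes $2^k F_I\lpa{2^{-k}z}$, where $F_I$ is the antiderivative of $F$ normalized by $F_I(0)=0$, while each $m\ge 1$ term contributes $\mathcal{O}\lpa{2^{k-\binom{m+1}{2}-km+k}}=\mathcal{O}(1)$ uniformly, using the bound on $F^{(m-1)}$ from Lemma~\ref{bounds-F}. This yields the Poissonized statement
\[
    \tilde N_{k,1}(z)=2^k F_I\lpa{2^{-k}z}+\mathcal{O}(1),
    \qquad (\Re(z)\ge 0),
\]
uniformly in $k$. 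To match the series formula in the theorem, I would differentiate $1-\sum_{j\ge 0}(-1)^j 2^{-\binom{j+1}{2}}e^{-2^jx}/(Q_jQ_\infty)$ term by term and observe, using $\binom{j+1}{2}-j=\binom{j}{2}$, that its derivative equals $F(x)$; the normalization $F_I(0)=0$ then follows from Euler's identity with $q=z=\tfrac12$, which gives $\sum_{j\ge 0}(-1)^j 2^{-\binom{j+1}{2}}/Q_j=Q_\infty$.

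For the Bernoulli-to-Poisson transfer, I would prove a JS-admissibility statement for $\tilde N_{k,1}$ exactly as in Lemma~\ref{JS-Mk1}: the cone bound follows from the trivial estimate $\eta_{n,k}\le n$, while on $\ve\le|\arg z|\le\pi$ the integral form $N_{k,1}(z):=e^z\tilde N_{k,1}(z)=\int_0^z 2e^{u/2}N_{k-1,1}(u/2)\dd u$ (with $N_{0,1}(z)=e^z-1$) yields the geometric contraction $\tfrac12(\cos\ve+1)<1$. An application of the Poisson--Charlier expansion \eqref{pc} then gives $\eta_{n,k}=\tilde N_{k,1}(n)+\mathcal{O}(1)$, completing the asymptotics. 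Finally, positivity of $F_I$ on $(0,\infty)$ is immediate from $F_I(x)=\int_0^x F(t)\dd t$ and the positivity of $F$ established in Proposition~\ref{F-pos}.

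The one place where care is needed is verifying that the integrated error term really is $\mathcal{O}(1)$ rather than $\mathcal{O}(2^{-k}z)$: a naive bound $|\tilde M_{k,1}(u)-2^k F(2^{-k}u)|=\mathcal{O}(1)$ followed by integration only gives $\mathcal{O}(2^{-k}z)$, which is too large when $z\asymp n$ and $k$ is small. The fix is to keep the full identity of Lemma~\ref{id-poi-mean} and integrate it term by term as above, so that the $m\ge 1$ contributions become $2^{k-\binom{m+1}{2}-km+k}[F^{(m-1)}(2^{-k}z)-F^{(m-1)}(0)]$, which is genuinely $\mathcal{O}(1)$ because $F^{(m-1)}$ is globally bounded on $\{\Re(z)\ge 0\}$. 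Everything else follows the external-profile template verbatim.
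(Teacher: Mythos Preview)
Your approach is correct and is precisely what the paper has in mind when it says ``all proofs used for the external profile extend to those for the internal profile'' (the paper gives no further details). The integral identity $\tilde N_{k,1}(z)=2^{-k}\int_0^z\tilde M_{k,1}(u)\dd u$ is a clean way to reduce everything to Section~\ref{mean}, and your handling of the potential $\mathcal{O}(2^{-k}z)$ blow-up by integrating Lemma~\ref{id-poi-mean} term by term is exactly the right fix.

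One arithmetic slip to correct: the coefficient in front of $[F^{(m-1)}(2^{-k}z)-F^{(m-1)}(0)]$ for $m\ge1$ is $2^{k-\binom{m+1}{2}-km}/Q_m$, not $2^{k-\binom{m+1}{2}-km+k}$ (you have an extra $+k$ in the exponent). With your exponent the $m=1$ term would be of order $2^{k-1}F(2^{-k}z)$, which is not $\mathcal{O}(1)$. With the correct exponent $2^{k-\binom{m+1}{2}-km}$, combined with $|F^{(m-1)}|=\mathcal{O}(2^{\binom{m}{2}})$ from Lemma~\ref{bounds-F}, you get $\mathcal{O}(2^{-(m-1)k-m})=\mathcal{O}(1)$ for all $m\ge1$, as you claim. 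The rest of the argument stands.
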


From the above expression, we see that as $x\to\infty$
\[
    F_I(x)
    = 1-\frac{e^{-x}}{Q_{\infty}}
    +\mathcal{O}\left(e^{-2x}\right),
\]
the behavior of $F_I(x)$ as $x\to0$ is the same as
\eqref{saddle-point-Fr} with $m=-1$ (by the same analysis given
there).

\begin{figure}[ht]
\begin{center}
\includegraphics[height= 5cm,width= 6cm]{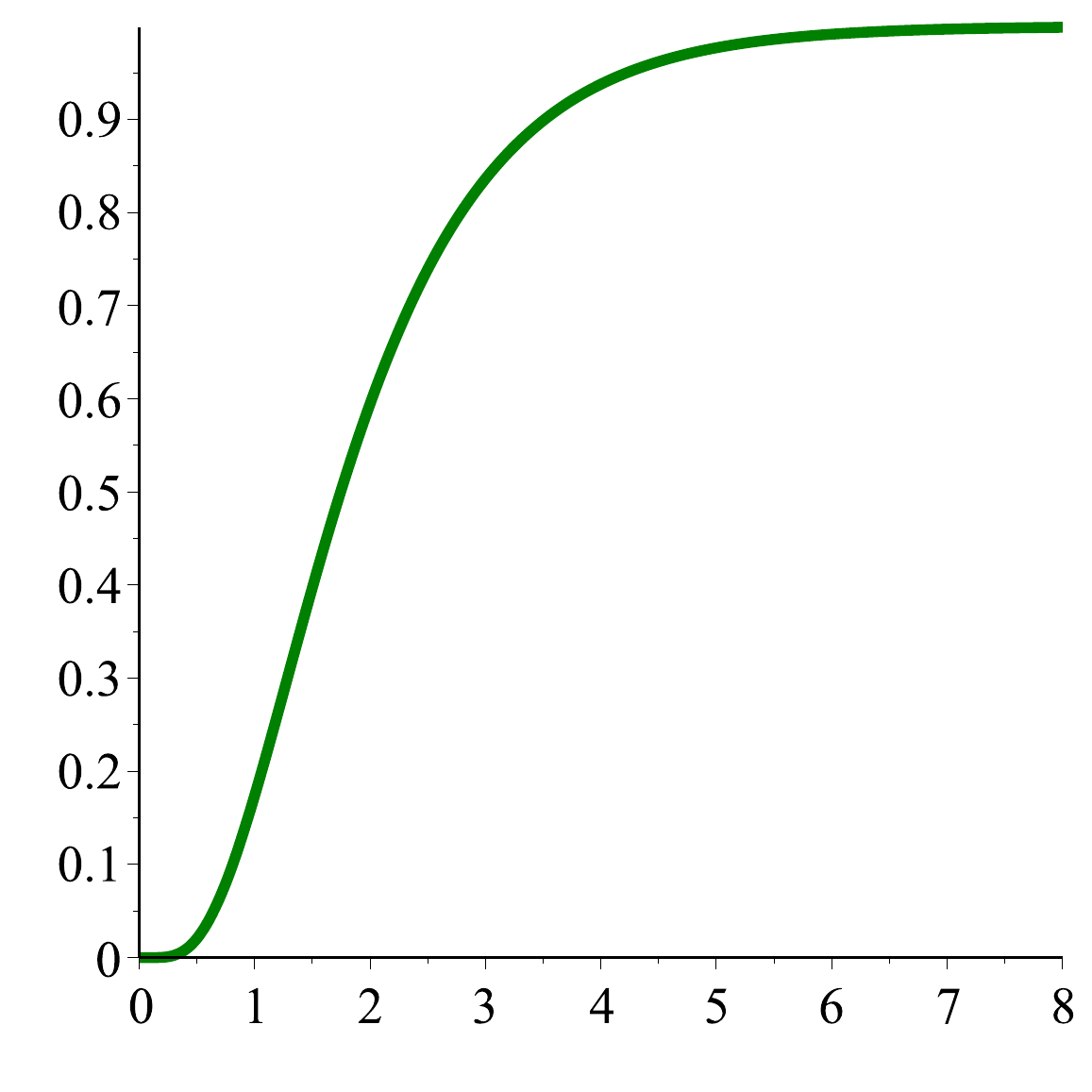}
\quad
\includegraphics[height= 5cm,width= 6cm]{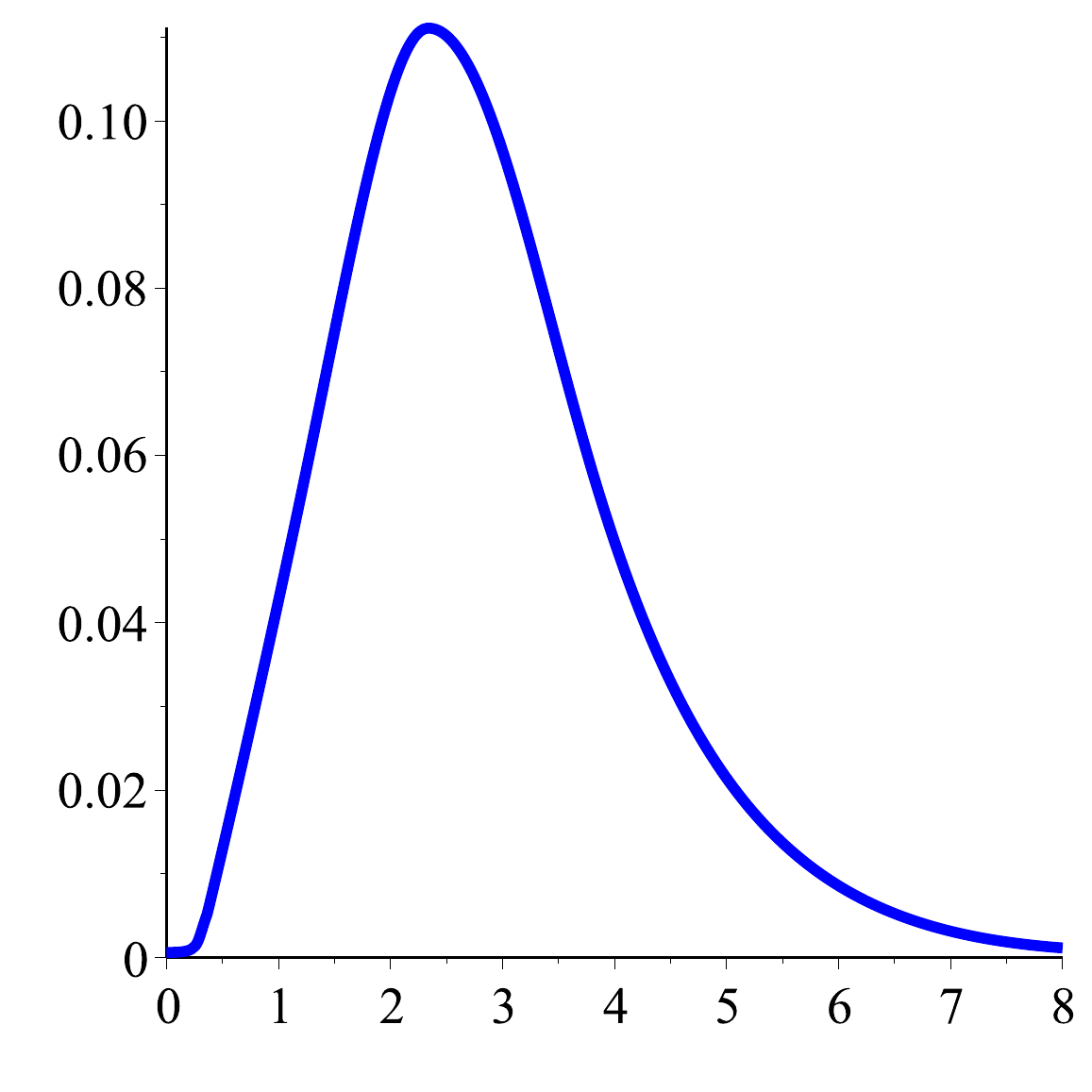}
\end{center}
\vspace*{-0.25cm}
\caption{\emph{The functions $F_I$ (left) and $G_I$ (right).}}
\label{plot-FI}
\end{figure}

Similarly, the same approaches in Section \ref{var} leads to the
following asymptotic expansion for the variance.
\begin{thm}
The variance of the internal profile satisfies
\[
    \mathrm{Var}(I_{n,k})
    = 2^kG_I\lpa{2^{-k}n}+\mathcal{O}(1),
\]
uniformly for $0\le k<n$, where $G_I(x)$ is positive on
$\mathbb{R}^+$ defined by
\[
    G_I(x)
    = \sum_{j,r\ge0}\sum_{0\le h,\ell\le j}
	\frac{(-1)^{r+h+\ell}
    2^{-j-\binom{r}{2}-\binom{h}{2}-\binom{\ell}{2}+h+\ell}}
    {Q_{\infty}Q_rQ_{h}Q_{j-h}
    Q_{\ell}Q_{j-\ell}}\,\varphi(2^{j+r},2^h+2^l;x),
\]
where $\varphi$ is defined in \eqref{varphi}.
\end{thm}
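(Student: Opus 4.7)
My plan is to follow exactly the same strategy used in Section~\ref{var} for $\mathrm{Var}(B_{n,k})$, with only the boundary data for the recurrence being different. Write $\mu_{n,k}^I := \mathbb{E}(I_{n,k})$ and $\nu_{n,k}^I := \mathbb{E}(I_{n,k}^2)$. The recurrence for $I_{n,k}$ (same distributional form as \eqref{dist-rec-exprof}) together with the new boundary conditions $I_{0,k}=0$, $I_{n,0}=1$ yields the second-moment recurrence
\[
    \nu_{n,k}^I
    =2^{2-n}\sum_{0\le j<n}\binom{n-1}{j}\nu_{j,k-1}^I
    +2^{2-n}\sum_{0\le j<n}\binom{n-1}{j}\mu_{j,k-1}^I\mu_{n-1-j,k-1}^I,
\]
with $\nu_{0,k}^I=0$, $\nu_{n,0}^I=1$. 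Passing to Poisson generating functions $\tilde{M}^I_{k,1}$ and $\tilde{M}^I_{k,2}$ gives the differential-functional equation analogous to \eqref{diff-func-sm}, with the only change being the initial datum $\tilde{M}^I_{0,2}(z)=1-e^{-z}$ in place of $e^{-z}$. I then introduce the Poissonized variance
\[
    \tilde{V}^I_k(z)
    :=\tilde{M}^I_{k,2}(z)-\tilde{M}^I_{k,1}(z)^2-z\,\tilde{M}^{I\,\prime}_{k,1}(z)^2,
\]
which, as in \eqref{diff-func-vk}, satisfies the same recurrence driven by $z\,(\tilde{M}^{I\,\prime\prime}_{k,1}(z))^2$ (with the adjusted boundary term).

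Next I would solve the Laplace-transformed recurrence exactly. Since $\tilde{M}^I_{k,1}(z)=2^kF_I(2^{-k}z)+\mathcal{O}(1)$, its second derivative has the same structure as that of $\tilde{M}_{k,1}$ up to an overall $2^{-j}$ factor coming from the extra integration in $F_I$; this is precisely what turns the $2h+2\ell$ in \eqref{G} into $h+\ell$ in $G_I$. Iterating the Laplace transform and partial-fractioning, exactly as in the derivation of \eqref{ex-var}, I obtain a closed form
\[
    \tilde{V}^I_k(z)
    =\sum_{(j,r,h,\ell)\in\mathscr{V}}
    \frac{2^{k-j}(-1)^{r+h+\ell}2^{-\binom{r}{2}-\binom{h}{2}-\binom{\ell}{2}+h+\ell}}
    {Q_rQ_{k-j-r}Q_hQ_{j-h}Q_{\ell}Q_{j-\ell}}\,
    \varphi\lpa{2^{r+j},2^h+2^\ell;2^{-k}z}.
\]
Using $Q_\infty/Q_{k-j-r}=1+\mathcal{O}(2^{j+r-k})$ as in \eqref{first-order} and splitting the tail into the two subregions $h,\ell\le\lfloor j/2\rfloor$ versus $h$ or $\ell>\lfloor j/2\rfloor$ (exactly as in the proof of \eqref{result-pois-mod}), the double sum converges absolutely, the remainder is $\mathcal{O}(1)$, and I obtain the Poisson-side estimate
\[
    \tilde{V}^I_k(z)=2^kG_I(2^{-k}z)+\mathcal{O}(1),\qquad |\arg z|\le\ve.
\]

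To pass from Poisson to Bernoulli, I would verify that $\tilde{M}^I_{k,1}$ and $\tilde{M}^I_{k,2}$ are uniformly JS-admissible. The bound $\mu^I_{n,k}\le n$ and the boundedness of $F_I$ give the cone estimate, while the exponential bound outside the cone is obtained by the integral representation \eqref{equ-Mk1} (with the modified boundary term) and the closure of $\JS$ under the operator $\tilde{f}+\tilde{f}'=2\tilde{f}(z/2)+\tilde{g}$ from \cite[Prop.~2.4]{HwFuZa}. The Poisson–Charlier expansion \eqref{pc} applied to $\nu^I_{n,k}$ and $\mu^I_{n,k}$ (retaining enough terms for an $\mathcal{O}(1)$ error after squaring) then yields
\[
    \mathrm{Var}(I_{n,k})=\nu^I_{n,k}-(\mu^I_{n,k})^2=\tilde{V}^I_k(n)+\mathcal{O}(1),
\]
uniformly for $0\le k<n$, which combined with the Poisson-side estimate gives the claimed asymptotic.

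For positivity of $G_I$ on $(0,\infty)$, I would imitate the last proof of Section~\ref{sec-Gx}. Computing the Laplace transform of $G_I$ along the lines leading to \eqref{lap-G-1} produces an identity of the shape
\[
    G_I(x)=\sum_{j\ge 0}2^j\int_0^x(x-t)\,\tilde{M}^{I\,\prime\prime}_{j,1}\lpa{2^j(x-t)}^2\,F_I\lpa{2^jt}\dd{t},
\]
and since $F_I>0$ on $(0,\infty)$ (already established for the mean) and the factors $(\tilde{M}^{I\,\prime\prime}_{j,1})^2$ are non-negative, positivity follows termwise. The most delicate part of the program, exactly as in the external case, is the manipulation of the Laplace-transform series to guarantee absolute convergence and to justify the interchange of summation and inverse transform; this required Lemmas \ref{LeG1}--\ref{LeG3} for $G$, and the corresponding uniform bounds in $j$ and $|s|$ for the internal profile are obtained by the same saddle-point and convolution estimates, with $2h+2\ell$ replaced by $h+\ell$ throughout, which only improves the quantitative decay.
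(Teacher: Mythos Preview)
Your proposal is correct and follows precisely the route the paper itself indicates: Section~\ref{int-profile} states the result without proof and says explicitly that ``all proofs used for the external profile extend to those for the internal profile,'' with only the boundary conditions changed. You have correctly identified the mechanism behind the change $2h+2\ell\mapsto h+\ell$ (one fewer derivative in the toll function, since $(\tilde{M}^{I}_{k,1})''=2^{-k}\tilde{M}'_{k,1}$), the closed form for $\tilde{V}^I_k$, the splitting argument of \eqref{first-order}--\eqref{result-pois-mod}, the JS-admissibility for de-Poissonization, and the positivity argument via the integral representation with $F_I>0$.

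One small overreach: your last paragraph invokes Lemmas~\ref{LeG1}--\ref{LeG3} as if they were needed for absolute convergence and the main asymptotic. They are not. Those lemmas establish the large-$|s|$ behavior of $\mathscr{L}[G;s]$ and serve only Proposition~\ref{Th7} (the small-$z$ asymptotics $G(z)\sim 2F(z)$), which is a separate result from the theorem you are proving. For the variance approximation and positivity, the elementary tail-splitting after \eqref{first-order} and the integral representation suffice; you do not need the saddle-point/convolution machinery here.
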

Note that the only difference between $G(x)$ and $G_I(x)$ is that the
exponent $2h+2\ell$ in the series definition \eqref{G} of $G(x)$ is
replaced by $h+\ell$; see Figure \ref{plot-FI}.

\begin{prop} The function $G_I(x)$ satisfies
\[
    G_I(x) \sim
	\begin{cases}\displaystyle
		\frac{e^{-x}}{Q_{\infty}},
		&\text{ if }x\to\infty;\\
		F_I(x), &\text{if }x\to0.
	\end{cases}
\]
\end{prop}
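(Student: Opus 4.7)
The plan is to follow the template of Section~\ref{sec-Gx}, which established the corresponding estimates for $G(x)$. The only structural change from $G$ to $G_I$ is that the factor $2^{2h+2\ell}$ in each summand of \eqref{G} is replaced by $2^{h+\ell}$; this shift moves the dominant term in the Laplace transform by one place and ultimately produces the coefficient $1$ (rather than $2$) of $F_I$ in the small-$x$ limit. For the large-$x$ asymptotics I would argue directly from the defining series: the unique tuple satisfying $2^{r+j}=1$ is $(j,r,h,\ell)=(0,0,0,0)$, which forces $2^h+2^\ell=2$, and its contribution is
\[
\frac{\varphi(1,2;x)}{Q_\infty}=\frac{e^{-x}-(x+1)e^{-2x}}{Q_\infty}.
\]
Every other tuple has $\min(2^{r+j},2^h+2^\ell)\ge 2$, whence $\varphi(2^{r+j},2^h+2^\ell;x)=\mathcal{O}(xe^{-2x})$ by the closed form \eqref{varphi}. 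The superexponential decay of $2^{-\binom{r}{2}-\binom{h}{2}-\binom{\ell}{2}}$ together with the convergence of $\sum_{h\ge 0}2^{h-\binom{h}{2}}/Q_h$ imply that the total non-leading contribution is $\mathcal{O}(xe^{-2x})$, yielding $G_I(x)=e^{-x}/Q_\infty+\mathcal{O}(xe^{-2x})\sim e^{-x}/Q_\infty$.

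For the small-$x$ asymptotics I would derive the Laplace transform of $G_I$ in parallel to \eqref{lap-G}. Summing out $r$ via the identity
\[
\sum_{r\ge 0}\frac{(-1)^r 2^{-\binom{r}{2}}}{Q_r(s+2^{r+j})}=\frac{Q_\infty}{2^j\,Q(-2^{1-j}s)}
\]
(a direct consequence of \eqref{laplace-F}) produces
\[
\mathscr{L}[G_I(z);s]=\sum_{j\ge 0}\frac{\tilde{g}_j^{*,I}\lpa{2^{-j}s}}{4^j\,Q\lpa{-2^{1-j}s}},\qquad \tilde{g}_j^{*,I}(s):=4^{-j}\,\mathscr{L}\bigl[z\lpa{\tilde{M}'_{j,1}(z)}^2;\,s\bigr].
\]
This is formally identical to \eqref{lap-G} except that $\tilde{M}''_{j,1}$ is replaced by $\tilde{M}'_{j,1}$, i.e., one fewer derivative. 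Re-running the small-$z$/large-$s$ table in the proof of Lemma~\ref{LeG1} with this shift shows that, for each fixed $j$, $R_j^I(s)$ scales like $|s|^{1-j}/Q(-2s)$; a direct computation for $j=1$ based on $\tilde{M}_{1,1}(z)=4e^{-z/2}-4e^{-z}$ gives
\[
R_1^I(s)=\frac{1}{s\,Q(-2s)}\,\lpa{1+\mathcal{O}(|s|^{-1})},
\]
so the dominant contribution now comes from $j=1$ rather than $j=2$. Uniform estimates analogous to Lemmas~\ref{LeG2}--\ref{LeG3} then yield
\[
\mathscr{L}[G_I(z);s]=\frac{1}{s\,Q(-2s)}\,\lpa{1+\mathcal{O}(|s|^{-1})},
\]
uniformly for large $|s|$ with $\Re(s)\ge\ve$. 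Since $1/(s\,Q(-2s))=\mathscr{L}[F_I(z);s]$, a saddle-point inversion exactly as in the proof of Proposition~\ref{Th5} transfers this to $G_I(x)\sim F_I(x)$ as $|x|\to 0$ in $|\arg x|\le\ve$.

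The main obstacle is the uniform-in-$j$ estimate on $\tilde{g}_j^{*,I}\lpa{2^{-j}s}$ replacing Lemma~\ref{LeG2}: because the relevant power of $s$ has shifted by one, the regions $1\le|s|\le 2^{j+1}$ and $|s|\ge 2^{j+1}$ must both be re-examined with the new exponents, and the closed-form contributions of the small-$j$ terms ($R_0^I$ and $R_2^I$) must be checked directly to confirm that they are indeed subdominant relative to $R_1^I$. Once this uniformity is secured, the remainder of the argument---summation as in Lemma~\ref{LeG3} and the final saddle-point inversion---carries over verbatim from Section~\ref{sec-Gx}.
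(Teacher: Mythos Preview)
Your approach is precisely the adaptation the paper has in mind: Section~\ref{int-profile} gives no separate proof and simply asserts that the arguments for $G$ carry over, so your outline---reading off the large-$x$ behavior from the series and, for small $x$, computing the Laplace transform of $G_I$, identifying the dominant $R_j^I$, and inverting via the saddle-point argument of Proposition~\ref{Th5}---is exactly the intended route. Your identification of $\frac{1}{sQ(-2s)}=\mathscr{L}[F_I;s]$ and of the shift of the dominant index from $j=2$ to $j=1$ is correct, as is the direct computation $R_1^I(s)=\frac{1}{sQ(-2s)}(1+\mathcal{O}(|s|^{-1}))$.

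One slip to fix: your stated general scaling $R_j^I(s)\asymp |s|^{1-j}/Q(-2s)$ is off by one power of $s$. Running the table with $\tilde M'_{j,1}$ in place of $\tilde M''_{j,1}$ gives $z(\tilde M'_{j,1})^2\sim c\,z^{2j-1}$ near $0$, hence $\mathscr{L}[z(\tilde M'_{j,1})^2;\sigma]\sim c'/\sigma^{2j}$, and after the rescalings and the extra $4^{-j}$ in your $\tilde g_j^{*,I}$ one obtains $R_j^I(s)\asymp |s|^{-j}/Q(-2s)$ for $j\ge 1$ (and $|s|^{-2}/Q(-2s)$ for $j=0$). This is consistent with your explicit $j=1$ calculation and still makes $R_1^I$ dominant, so the conclusion is unaffected; just correct the exponent when you write it up, and check that the uniform bounds replacing Lemma~\ref{LeG2} are stated with the matching shift.
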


\begin{cor} The variance of the internal profile $I_{n,k}$ tends to
infinity if and only if there exists a positive sequence $\omega_n$
tending to infinity with $n$ such that
\begin{equation}\label{range-Ink}
    k_s+\frac{\omega_n}{\log n}
    \le k\le k_h-1-\frac{\omega_n}{\log n},
\end{equation}
where $k_s$ and $k_h$ are defined in \eqref{kskh}.

Furthermore, the expectation of the internal profile $I_{n,k}$ tends
to infinity if and only if there exists a positive sequence
$\omega_n$ tending to infinity with $n$ such that
\begin{equation}\label{range-Ink-2}
    \omega_n
    \le k\le k_h-1-\frac{\omega_n}{\log n}.
\end{equation}
\end{cor}

Note that the only difference from the central range \eqref{k0k1} for
the external profile is the additional shift of $-1$ in the upper
bound, a property implied from the fundamental relation
\[
    2I_{n,k}
    = I_{n,k+1}+B_{n,k+1}.
\]

Since $\mathrm{Var}(I_{n,k}) = o\lpa{(\mathbb{E}(I_{n,k}))^2}$ in
the range (\ref{range-Ink-2}), we thus obtain the following
concentration result.
\begin{cor}
If $\mathbb{E}(I_{n,k})\to\infty$, then the internal profile is asymptotically concentrated
around its expected value:
\[
    \frac{I_{n,k}}{\mathbb{E}(I_{n,k})}
    \stackrel{P}{\longrightarrow} 1.
\]
\end{cor}
Finally, we also expect a central limit theorem in the range
\eqref{range-Ink} where $\mathrm{Var}(I_{n,k})\to\infty$
(similarly to the external profile); this will be addressed in a
companion paper.

\section{Applications}\label{app}

In this section, we apply our results of the profiles to establish
the asymptotic two-point concentration of the height and the
saturation level in random DSTs, respectively.

\subsection{Height}

We first prove Theorem~\ref{Th:height} for the height of random DSTs.

\begin{lmm}\label{Le6.1}
For all $k$,
\begin{equation}\label{height-lr-1}
    1 - \sum_{\ell\ge 1} 2^{-\ell} \mu_{n,k+\ell}
    \le \mathbb{P}(H_n\le k)
\end{equation}
and for all $k$ with $\mu_{n,k+1}\to\infty$,
\begin{equation}\label{height-lr-2}
    \mathbb{P}(H_n\le k)
    = \mathcal{O}\left(\frac{1}{\mu_{n,{k+1}}}\right),
\end{equation}
\end{lmm}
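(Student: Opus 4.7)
The two inequalities come from very different ideas: the lower bound is a deterministic structural fact combined with a single application of Markov's inequality, while the upper bound is a standard second-moment argument using \eqref{rel-mean-var}.

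For \eqref{height-lr-1}, my plan is to introduce the weighted tail
\[
    T_{n,k} \;:=\; \sum_{j\ge 1} 2^{-(k+j)}\,B_{n,k+j}
\]
and establish the pathwise lower bound $T_{n,k}\ge 2^{-k}\,\mathbf{1}_{\{H_n>k\}}$. The key observation is that whenever $H_n>k$ there is an external node at some level $>k$, so all of its ancestors at levels $0,1,\dots,k$ are internal; in particular, there exists an internal node $v$ at level $k$. The subtree rooted at $v$ is a proper binary tree, hence Kraft's equality gives $\sum_{j\ge 1}2^{-j}b^{(v)}_j=1$, where $b^{(v)}_j$ denotes the number of external nodes of the subtree at depth $j$ from $v$. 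Since $b^{(v)}_j\le B_{n,k+j}$, this yields $T_{n,k}\ge 2^{-k}$ on $\{H_n>k\}$. A Markov inequality then gives
\[
    \mathbb{P}(H_n>k) \;\le\; 2^k\,\mathbb{E}(T_{n,k})
    \;=\; \sum_{j\ge 1} 2^{-j}\mu_{n,k+j},
\]
which is exactly \eqref{height-lr-1} after rearrangement.

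For \eqref{height-lr-2}, the approach is shorter: since $H_n\le k$ forces $B_{n,k+1}=0$, we have $\mathbb{P}(H_n\le k)\le \mathbb{P}(B_{n,k+1}=0)$, and on this event $|B_{n,k+1}-\mu_{n,k+1}|=\mu_{n,k+1}$. Chebyshev's inequality then gives
\[
    \mathbb{P}(B_{n,k+1}=0)
    \;\le\; \frac{\sigma_{n,k+1}^2}{\mu_{n,k+1}^2}.
\]
By \eqref{rel-mean-var}, the hypothesis $\mu_{n,k+1}\to\infty$ implies $\sigma_{n,k+1}^2=\Theta(\mu_{n,k+1})$, so the right-hand side is $\mathcal{O}(1/\mu_{n,k+1})$.

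Neither step looks like a serious obstacle. The only subtle point is the pathwise bound $T_{n,k}\ge 2^{-k}\mathbf{1}_{\{H_n>k\}}$, which at first sight might seem to require analyzing several levels at once; the one-line Kraft argument applied to a single internal node $v$ at level $k$ dispatches it cleanly, and this is the reason the $2^{-j}$ weights naturally appear in \eqref{height-lr-1}.
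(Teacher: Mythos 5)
Your proof is correct and takes essentially the same approach as the paper: the second-moment (Chebyshev) bound for \eqref{height-lr-2} is identical, and your Kraft-equality argument for the pathwise bound $T_{n,k}\ge 2^{-k}\mathbf{1}_{\{H_n>k\}}$ is just a local re-derivation of the identity $I_{n,k}=\sum_{\ell\ge1}2^{-\ell}B_{n,k+\ell}$ that the paper invokes directly together with $\mathbb{P}(H_n>k)=\mathbb{P}(I_{n,k}>0)\le\mathbb{E}(I_{n,k})$.
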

\begin{proof}
The proof relies on the first and the second moment method.
Noting that $I_{n,k}> 0$ if and only if $H_n> k$, we have, by the
first moment method,
\[
    \mathbb{P}(H_n> k)
    = \mathbb{P}(I_{n,k}>0)
	\le \mathbb{E}(I_{n,k}).
\]
On the other hand, in view of the relation $I_{n,k} = \sum_{\ell \ge
1} 2^{-\ell}B_{n,k+\ell}$, \eqref{height-lr-1} follows from the
inequality
\[
    \mathbb{P}(H_n> k)
    \le \sum_{\ell \ge 1} 2^{-\ell} \mu_{n,k+\ell}.
\]

For the upper bound \eqref{height-lr-2}, since $B_{n,k+1} > 0$
implies that $H_{n} > k$, we see that
\[
    \mathbb{P}(H_n> k)
    \ge \mathbb{P}(B_{n,k+1} > 0).
\]
By the second moment method,
\[
    \mathbb{P}(H_n \le k)
    \le \mathbb{P}(B_{n,k+1} = 0)
    \le \frac{\sigma^2_{n,k+1}}{\mu_{n,k+1}^2}.
\]
Now, from Theorem~\ref{thm-mean} and Theorem~\ref{thm-var} and
the asymptotic growth of $G(z)$ and $H(z)$ from
Section~\ref{asymp-F} and Section~\ref{sec-Gx}, respectively, we
deduce that the variance of $B_{n,k}$ is asymptotically of the same
order as the mean:
\begin{equation}\label{rel-mean-var}
    \sigma_{n,k}^2
    = \Theta(\mu_{n,k})
\end{equation}
for $k$ with $\mu_{n,k}\to\infty$; compare with
Corollary~\ref{cor-iff}. Then \eqref{height-lr-2} follows from
\eqref{rel-mean-var}.
\end{proof}

Theorem~\ref{Th:height} is then a consequence of the following two
limit results.
\begin{lmm}\label{Le6.2}
The height of random DSTs satisfies
\[
    \lim_{n\to\infty} \mathbb{P}(H_n\le k_H+1)
    = 1\qquad\text{and}\qquad
    \lim_{n\to\infty} \mathbb{P}(H_n\le k_H-1)
    = 0.
\]
\end{lmm}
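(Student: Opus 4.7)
\textit{Plan.}
Both limits will be obtained by combining Lemma~\ref{Le6.1} with sharp asymptotics for $\mu_{n,k}$ in a neighbourhood of $k=k_H$. Set $L:=\log_2 n$ and $\alpha:=k-L$. In this range $2^{-k}n\to 0$, so the derivative corrections in \eqref{exp-munk} contribute only a factor $1+O(1/n)$ and we may replace $\mu_{n,k}$ by $2^k F(2^{-k}n)$. Substituting the saddle-point approximation of Proposition~\ref{Th5}, using $\log\rho=\alpha\log 2+\log\alpha+O(1/\alpha)$ and carefully collecting the $(\log\rho)^2/(2\log 2)$ and $(\tfrac12+1/\log 2)\log\rho$ contributions, one obtains after cancellations
\[
    \log\bigl(2^k F(2^{-k}n)\bigr)
    = \log n + \alpha\Lpa{\tfrac{3\log 2}{2}+1}
    -\tfrac{\alpha^2\log 2}{2} - \alpha\log\alpha
    -\tfrac{(\log\alpha)^2}{2\log 2} - P(\log_2\rho) + O(1),
\]
where $P$ is the bounded periodic function of \eqref{Pt}. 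The transition point $k^*$ at which this quantity vanishes satisfies $k^* = L + \sqrt{2L} + 1 + \tfrac{1}{\log 2} - \tfrac12\log_2 L + o(1)$, exactly one unit above the unfloored value appearing in \eqref{kH}.

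For the second limit, \eqref{height-lr-2} reduces the task to showing $\mu_{n,k_H}\to\infty$. Writing $\alpha_{k_H}=\alpha^*-1-\theta_n+o(1)$, with $\theta_n\in[0,1)$ the floor fluctuation, and linearising the expansion above in $\alpha$ about $\alpha^*$ (whose slope is $-\sqrt{2L}\log 2 + O(\log L)$), I obtain $\log\mu_{n,k_H}=(1+\theta_n)\sqrt{2L}\log 2 + O((\log L)^2)\to+\infty$, which settles the second limit.

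For the first limit, by \eqref{height-lr-1} it suffices to prove $\sum_{\ell\ge 1}2^{-\ell}\mu_{n,k_H+1+\ell}\to 0$. The same linearisation shows that $\mu_{n,k}$ decreases by a factor $\exp(-\sqrt{2L}\log 2)$ when $k$ is incremented by one in this range, so the infinite sum is dominated by its first term $\tfrac12\mu_{n,k_H+2}$. Since $\alpha_{k_H+2}=\alpha^*+(1-\theta_n)+o(1)$, the expansion yields $\log\mu_{n,k_H+2}\le -(1-\theta_n)\sqrt{2L}\log 2 + O((\log L)^2)$, which tends to $-\infty$, and the first limit follows.

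The principal technical obstacle is the uniform bookkeeping, over all $n$, of the sub-leading term $-(\log\alpha)^2/(2\log 2)$ and the bounded periodic fluctuation $P(\log_2\rho)$ in the small-argument expansion of $F$: one must verify that the integer shift produced by the floor in \eqref{kH} places $\mu_{n,k_H}$ and $\mu_{n,k_H+2}$ on opposite sides of the transition level $\mu_{n,k^*}=\Theta(1)$, so that the two-point concentration holds along every sequence $n\to\infty$, including those for which $\theta_n$ approaches the boundary values of $[0,1)$.
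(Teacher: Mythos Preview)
Your strategy is the same as the paper's: combine Lemma~\ref{Le6.1} with the saddle-point expansion of Proposition~\ref{Th5} to show that $\mu_{n,k_H}\to\infty$ and $\sum_{\ell\ge 2}2^{-\ell}\mu_{n,k_H+\ell}\to 0$. The paper parametrises via $R=\log\rho$ and solves the saddle-point equation to obtain
\[
    k_\ell\log 2 + \log F\lpa{2^{-k_\ell}n}
    = -\sqrt{2\log_2 n}\,(\ell-1-\theta)\log 2
      - \tfrac34\log\log_2 n + \mathcal{O}(1),
\]
with $\theta\in[0,1)$ the fractional part. It then bounds the tail by the monotonicity $\mu_{n,k_\ell}\le\mu_{n,k_2}$, which follows from the ratio estimate $F(x/2)=\mathcal{O}\bigl(xF(x)/\log(1/x)\bigr)$.

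There is a genuine gap in your argument, precisely at the boundary case you flag at the end. Your linearisation gives
\[
    \log\mu_{n,k_H+2}
    \le -(1-\theta_n)\sqrt{2L}\,\log 2 + \mathcal{O}\bigl((\log L)^2\bigr),
\]
and you conclude this tends to $-\infty$. But along sequences with $\theta_n\to 1^-$, the leading term $-(1-\theta_n)\sqrt{2L}\log 2$ can be bounded (take $\theta_n=1-1/\sqrt{L}$), so your error term swallows the signal and the conclusion does not follow. What rescues this case in the paper is the explicit $-\tfrac34\log\log_2 n$ term, carried with an $\mathcal{O}(1)$ remainder: even when $(1-\theta_n)\sqrt{2L}$ is bounded, the $-\tfrac34\log L$ still drives the expression to $-\infty$. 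Your expansion in $\alpha$ drops exactly the pieces that produce this term: the factor $\rho^{1/2+1/\log 2}/\sqrt{\log_2\rho}$ from Proposition~\ref{Th5} contributes a net $+\log_2\alpha\sim\tfrac12\log_2 L$ that is not $\mathcal{O}(1)$, so your displayed $\mathcal{O}(1)$ is in fact $\mathcal{O}(\log\log n)$, and your location of the transition $k^*$ is off by an $\mathcal{O}(\log\log n/\sqrt{\log n})$ shift (compare $k_h$ in \eqref{xx}). To close the gap you must keep one more order in the expansion---either work in $R=\log\rho$ as the paper does, or retain the $\log_2\alpha$ and $\tfrac12\log\log_2\rho$ terms in your $\alpha$-parametrisation---so that the constant $-\tfrac34\log\log_2 n$ appears with $\mathcal{O}(1)$ error rather than being absorbed.
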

\begin{proof}
We first consider the expected value of the external profile around
the level $k_H$ (defined in \eqref{kH}) and write
\[
    k_{\ell}
    := k_{H}+\ell
	= \Bigl\lfloor\log_2 n
	+ \sqrt{2\log_2n}
    - \frac{1}{2}\log_2\log_2 n
	+ \frac{1}{\log 2}\Bigr\rfloor+\ell
	\qquad(\ell\in\mathbb{Z}).
\]
(Throughout the proof, $\ell$ is fixed unless explicitly stated otherwise.)
To prove the asymptotic concentration of the height at $k_H$ and
$k_H+1$, we also need a finer approximation of the mean. By using
more terms in the identity of Lemma~\ref{id-poi-mean} and \eqref{pc},
we obtain
\begin{equation}\label{exp-munk}
    \mu_{n,k}
	= 2^{k}F\lpa{2^{-k}n}+F'\lpa{2^{-k}n}
	+\mathcal{O}\lpa{2^{-k}n+n^{-1}}.
\end{equation}
Since $2^{-k_{\ell}}n\to 0$, we apply Proposition~\ref{Th5}
for the asymptotics of $F(2^{-k_{\!\ell}}n)$ (and its derivatives).
Note that the saddle-point equation in Proposition~\ref{Th5} has the
form $\frac{\rho}{\log\rho} = \frac{2^{k_{\!\ell}}}{n\log 2}$, or,
with $\lambda := \log_2\rho$,
\[
    \lambda-\log_2\lambda
    = \sqrt{2\log_2n}
	- \frac{1}{2}\log_2\log_2n
    + \frac{1}{\log 2}
	+ \ell
	- \theta,
\]
where $\theta= \theta_n$ denotes the fractional part of
$\log_2n+\sqrt{2\log_2 n}-\frac{1}{2}\log_2\log_2 n +\frac1{\log 2}$.
Now, by a direct bootstrapping argument, we obtain
\[
    \lambda
	= \sqrt{2\log_2 n}
    + \frac1{\log 2}
	+ \ell+\frac12-\theta
	+ \frac{2\ell+1+\frac{2}{\log 2}
    - 2\theta}{2\sqrt{2(\log 2)\log n}}
    + \mathcal{O}\Lpa{\frac{1+\ell^2}{\log n}},
\]
which also holds uniformly for $\ell=o((\log n)^{1/4})$. Substituting this
into the asymptotic expansion of $F(2^{-k_{\ell}}n)$ in
Proposition~\ref{Th5}, we have
\begin{equation}\label{eq:k-ell}
\begin{split}
    k_{\ell}\log 2+\log F\Lpa{\frac{n}{2^{k_{\ell}}}}
    &= -\sqrt{2\log_2 n}\,(\ell-1-\theta)\log 2
    - \frac{3\log\log_2n}{4}\\
	&\qquad + p(\ell-\theta)-P(\lambda)
    +\mathcal{O}\Lpa{\frac{1+\ell^2}{\sqrt{\log n}}},
\end{split}	
\end{equation}
again the $O$-term holding uniformly for $\ell=o((\log n)^{1/4})$,
where $P$ is given in \eqref{Pt} and $p(x)$ is the quadratic
polynomial:
\[
    p(x)
	= -\frac{\log 2}2\,x^2
    -(1-\log 2)x
	+ 1
	- \frac1{2\log 2}
    - \frac58\log 2
	- \frac12\log\pi.
\]
On the other hand, from \eqref{exp-munk} and the estimates (by
Proposition~\ref{Th5})
\[
    \frac{F'(2^{-k_{\ell}}n)}{F(2^{-k_{\ell}}n)}
    = \mathcal{O}(\rho)
	= \mathcal{O}\lpa{2^\lambda},
\]
we have
\begin{align}\label{height-large-k}
    \mu_{n,k_{\ell}}
	= 2^{k_{\ell}}F(2^{-k_{\ell}}n)
    \left(1+\mathcal{O}\lpa{n^{-1}\sqrt{\log n}}\right)+o(1).
\end{align}
Consequently, if $\ell\le 0$, then
\begin{equation}\label{tend-to-inf}
    \mu_{n,k_{\ell}}
	=\Omega\left(\frac{2^{\sqrt{2\log_2 n}(1+\theta)}}
	{(\log_2 n)^{3/4}}\right)+o(1)
    \to\infty,
\end{equation}
and if $\ell\ge 2$, then
\begin{equation}\label{tend-to-zero}
    \mu_{n,k_{\ell}}
	=\mathcal{O}\left(\frac{2^{-\sqrt{2\log_2 n}(1-\theta)
    }}{(\log_2 n)^{3/4}}\right)+o(1)\to 0.
\end{equation}
Now, by Lemma~\ref{Le6.1}, we show that
\begin{equation}\label{eqLe6.2}
    \sum_{\ell \ge 1} 2^{-\ell} \mu_{n,k_{\ell+1}}
    \to 0,\qquad\text{and}\qquad
    \mu_{n,k_0} \to \infty,
\end{equation}
which will then prove the lemma. The latter follows directly from
\eqref{tend-to-inf}. For the former, we use \eqref{height-large-k},
together with the expansion \eqref{eq:k-ell}, for $2\le \ell\le (\log
n)^{1/5}$, and then the boundedness of $\mu_{n,k_\ell}$ (see
Corollary~\ref{cor-central-range}) for larger $\ell$. This proves
\eqref{eqLe6.2} and the lemma.
\end{proof}
\begin{Rem}
Observe that the only missing case in \eqref{tend-to-inf} and
\eqref{tend-to-zero} is $\ell= 1$ for which we have
\[
    2^{k_1}F(2^{-k_1}n)
    = \frac{e^{\theta\sqrt{2\log_2 n}\,\log 2+\mathcal{O}(1)}}
    {(\log_2n)^{3/4}}.
\]
Thus, in this case, we have
\begin{align*}
    \mu_{n,k_1}\begin{cases}
        \to\infty, & \text{if }
        \theta\ge\frac{3\log_2\log_2 n}{4\sqrt{2\log_2 n}}
        \left(1+\frac{\omega_n}{\log_2\log_2 n}\right),\\
        \asymp 1, & \text{if }
        \theta= \frac{3\log_2\log_2 n}{4\sqrt{2\log_2 n}}
        \left(1\pm\frac{\mathcal{O}(1)}{\log_2\log_2 n}\right),\\
        \to0, & \text{if }
        \theta\le \frac{3\log_2\log_2 n}{4\sqrt{2\log_2 n}}
        \left(1-\frac{\omega_n}{\log_2\log_2 n}\right),
    \end{cases}
\end{align*}
where $\omega_n$ is any sequence tending to infinity.

We see that in almost all cases $\mu_{n,k_1}\to\infty$ and
$\mu_{n,k_1+1}\to0$, meaning that the height is in almost all cases
asymptotically equal to $k_H+1$; see also \cite{KnSz2} where this was
observed.

\end{Rem}

\subsection{Saturation Level}

Recall that the saturation level $S_n$ of a binary tree with $n$
internal nodes is defined as the maximal level with $I_{n,k} = 2^k$,
that is, up to level $S_n$ the binary tree is complete.

Define $k_S$ as
\[
    k_S
    = \left\lceil\log_2n-\log_2\log n\right\rceil,
\]
which is at the lower boundary of the central range \eqref{k0k1}.

\begin{thm}\label{Th:satlevel}
The distribution of $S_n$ is asymptotically concentrated on the two
points $k_S-1$ and $k_S$:
\[
    \mathbb{P}(S_n= k_S-1\ \text{or}\ S_n= k_S)
    \to 1,\qquad (n\to\infty).
\]
\end{thm}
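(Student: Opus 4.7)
The plan is to follow the same first-and-second-moment scheme used for the height in Theorem~\ref{Th:height}, based on the structural identity
\[
    \{S_n \ge k\} = \{I_{n,k} = 2^k\} = \bigcap_{j=0}^{k}\{B_{n,j}=0\},
\]
which holds because level $k$ is fully populated with internal nodes if and only if no external node appears at any ancestor level. Using this, I would split the complement of $\{S_n \in \{k_S-1,k_S\}\}$ into the lower tail $\{S_n \le k_S-2\}$ and the upper tail $\{S_n \ge k_S+1\}$, and bound each separately.

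For the lower tail, I would apply a union bound followed by Markov's inequality:
\[
    \mathbb{P}(S_n \le k_S-2)
    = \mathbb{P}\llpa{\bigcup_{j=0}^{k_S-1}\{B_{n,j}>0\}}
    \le \sum_{j=0}^{k_S-1}\mathbb{E}(B_{n,j}).
\]
Since $k_S-1 < \log_2(n/\log n)$, the ratio $2^{-j}n$ tends to infinity uniformly in $j \le k_S-1$, so the sharp bound \eqref{asymp-mean-simp} gives $\mathbb{E}(B_{n,j}) = \mathcal{O}\lpa{2^{j}e^{-n\,2^{-j}}}$. The latter quantity is strictly increasing in $j$, so the sum is controlled by its last term; using $2^{k_S-1} \le n/\log n$ and hence $n\,2^{1-k_S} \ge \log n$, I obtain $\mathbb{E}(B_{n,k_S-1}) = \mathcal{O}(1/\log n)$, while the remaining terms decay doubly exponentially and contribute negligibly.

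For the upper tail, I would apply the second moment method at the single critical level $k_S+1$:
\[
    \mathbb{P}(S_n \ge k_S+1)
    \le \mathbb{P}(B_{n,k_S+1}=0)
    \le \frac{\mathrm{Var}(B_{n,k_S+1})}{\mathbb{E}(B_{n,k_S+1})^2}.
\]
Since $2^{k_S+1} \asymp n/\log n$, we still have $2^{-k_S-1}n \asymp \log n \to \infty$, so \eqref{asymp-mean-simp} yields $\mathbb{E}(B_{n,k_S+1}) \asymp (n/\log n)\,e^{-\Theta(\log n)}$, a positive fractional power of $n$, and in particular $\mathbb{E}(B_{n,k_S+1}) \to \infty$. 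The Remark following Theorem~\ref{thm-var} ensures $\mathrm{Var}(B_{n,k_S+1}) \sim \mathbb{E}(B_{n,k_S+1})$ throughout this regime, so the Chebyshev bound reduces to $\mathcal{O}(1/\mathbb{E}(B_{n,k_S+1})) \to 0$. The only point that needs some care is the bookkeeping of the ceiling in $k_S$, which is arranged precisely so that $k_S-1$ is safely below and $k_S+1$ safely above the threshold $\log_2(n/\log n)$ at which $\mathbb{E}(B_{n,k})$ crosses from vanishing to diverging; this slack of two levels provides all the room needed for both bounds to go through.
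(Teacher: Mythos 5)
Your proof is correct and follows essentially the same route as the paper's: the same structural identity $\{S_n \ge k\} = \bigcap_{j\le k}\{B_{n,j}=0\}$, the same first-moment (union bound plus Markov) control of the lower tail, the same second-moment (Chebyshev) control of the upper tail at level $k_S+1$, and the same asymptotic input $\mu_{n,k}\sim\sigma_{n,k}^2\sim\frac{2^k}{Q_k}(1-2^{-k})^n$ for $2^{-k}n\to\infty$. Your write-up just spells out more explicitly the summation estimate and the sizes of $n\,2^{-(k_S\pm1)}$, which the paper leaves to the reader.
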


\begin{Rem}
Similar to the height, the saturation level $S_n$ is concentrated at
$k_{S}-1$ in almost all cases.
\end{Rem}

The proof of Theorem~\ref{Th:satlevel} is very similar to that of
Theorem~\ref{Th:height}. The basic observation is that $S_n < k$ if
and only if $\sum_{\ell \le k} B_{n,\ell}> 0$. In particular, if
$B_{n,k} > 0$ then $S_n < k$. Hence, as above, a direct application
of the first and the second moment method implies that
\[
    1- \sum_{\ell\le k} \mu_{n,\ell}
    \le \mathbb{P}(S_n \ge k)
    \le \frac{\sigma_{n,k}^2}{ \mu_{n,k} ^2}.
\]

By similar arguments as above, Theorem~\ref{Th:satlevel} then follows
from the limit results:
\begin{align*}
    \lim_{n\to\infty}  \mathbb{P}(S_n \ge k_S-1)
    = 1\quad\text{and}\quad
    \lim_{n\to\infty}  \mathbb{P}(S_n \ge k_S+1)
    = 0.
\end{align*}
The only difference is that we now use the asymptotic expansion,
for $2^{-k}n\to\infty$,
\[
    \mu_{n,k}
    \sim \sigma_{n,k}^2
    \sim \frac{2^k}{Q_k} \lpa{1-2^{-k}}^n.
\]

\section*{Acknowledgment}

We thank the anonymous referees for a careful reading of the paper and
for helpful comments and suggestions; one of them offered many
critical, well-judged comments and suggestions to the extent of
being almost a coauthor of this paper.

\pagebreak

\appendix
\addcontentsline{toc}{section}{Appendices}
\section*{Appendices}

\section{Appendix A: Proof of Proposition~\ref{Th5}}
\label{A:A}

We give a detailed proof of Proposition~\ref{Th5}, which for
convenience is re-stated here.

\setcounter{prop}{0}

\begin{prop}
For each integer $m\ge 0$, the $m$th derivative of $F$ satisfies
\begin{equation}\label{saddle-point-Fr-A}
    F^{(m)}(z)
    = \frac{\rho^{m+\frac12+\frac1{\log 2}}}
    {\sqrt{2\pi\log_2\rho}}\,
    \exp\left(-\frac{(\log\rho)^2}{2\log 2}
    -P(\log_2\rho)\right)
    \left(1+\mathcal{O}\left(|\log\rho|^{-1}\right)\right),
\end{equation}
as $|z|\to0$ in the sector $|\arg(z)|\le \ve$, where $P(u)$ is given
in \eqref{Pt} and $\rho$ solves the equation
\begin{align*}
    \frac{\rho}{\log\rho}= \frac{1}{z\log 2},
\end{align*}
satisfying $|\rho|\to\infty$ as $|z|\to0$.
\end{prop}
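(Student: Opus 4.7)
The plan is to attack Proposition~\ref{Th5} via the saddle-point method applied to the inverse Laplace transform representation, as already sketched in Section~\ref{asymp-F} of the paper, and fill in the technical details that were deferred to the appendix. Since $\mathscr{L}[F^{(m)}(z);s] = s^m/Q(-2s)$ (by \eqref{laplace-F} and the derivative-to-multiplication rule for the Laplace transform), I would start from
\[
    F^{(m)}(z) = \frac{1}{2\pi i}\int_{1-i\infty}^{1+i\infty}
    \frac{s^m e^{zs}}{Q(-2s)}\,\mathrm{d}s,
\]
valid for $\Re(z)>0$. Substituting the Mellin asymptotic expansion \eqref{Q2s-asymp} for $\log Q(-2s)$, the integrand becomes $\exp(\Phi(s;z))(1+\mathcal{O}(|s|^{-1}))$, where
\[
    \Phi(s;z) := zs + m\log s - \tfrac{1}{2\log 2}(\log s)^2
    - \tfrac{1}{2}\log s - P(\log_2 s).
\]

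For real $z = r > 0$ tending to $0$, I would shift the contour to $\Re(s)=\rho$, the real solution of $\partial_s \Phi = 0$. Differentiating $\Phi$ and discarding lower-order contributions from $m\log s$, $\tfrac12\log s$, and $P(\log_2 s)$ (which are all $\mathcal{O}(|s|^{-1})$ in the derivative), one finds the leading saddle-point equation $r = (\log s)/(s\log 2)$, i.e.\ $s/\log s = 1/(r\log 2)$, which is exactly the definition of $\rho$ in the statement. The second derivative $\Phi''(\rho;r) \sim -1/(\rho^2 \log 2)$ supplies the Gaussian width $\rho\sqrt{\log 2}$ in the standard saddle-point formula (see \cite[Ch.~VIII]{FlSe}). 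Evaluating $\Phi$ at $\rho$ and using $z\rho = \log\rho/\log 2 + \mathcal{O}(1/\log\rho)$ yields
\[
    z\rho - \tfrac{1}{2\log 2}(\log\rho)^2
    = \tfrac{1}{2\log 2}(\log\rho)^2 + \mathcal{O}(1),
\]
and collecting all the prefactors (including the $s^m$, the $-\tfrac12\log s$, the $-P(\log_2 s)$, and the Gaussian normalization $1/\sqrt{-2\pi\Phi''(\rho;r)}$) reproduces the announced expansion \eqref{saddle-point-Fr-A}, with relative error $\mathcal{O}(1/\log\rho)$ coming both from the $\mathcal{O}(|s|^{-1})$ error in \eqref{Q2s-asymp} and from the cubic-term remainder of the saddle-point estimate.

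For complex $z$ with $|\arg(z)|\le\varepsilon$, the contour deformation is the delicate part. I would keep the central portion of the contour (roughly $|\Im s|\le 1$) along the vertical line $\Re(s)=|\rho|$ in the direction of steepest descent through $\rho(z)$, and tilt the two tails (from $\Re(s)=1$ to $\pm i\infty$) slightly into the left half-plane so that $\Re(zs)\to -\infty$ along them; this is the deformation mentioned in the excerpt. Admissibility follows because (i)~$Q(-2s)$ is zero-free in $\Re(s)>-\tfrac12$ and the Mellin expansion \eqref{Q2s-asymp} holds uniformly in the sector $|\arg s|\le\pi-\varepsilon$, and (ii)~along the tilted tails, the factor $e^{zs}$ decays exponentially because $\arg(zs)$ stays bounded away from $\pm\pi/2$ thanks to the assumption $|\arg z|\le\varepsilon$. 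Away from the saddle, the Gaussian tail estimate on the central vertical piece gives the usual super-polynomial decay, and the contributions of the tilted tails are absorbed into the error.

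The main obstacle I anticipate is the \emph{uniformity} of all these estimates as $z$ ranges over the sector $|\arg z|\le\varepsilon$ with $|z|\to 0$. Three points require care: first, verifying that the saddle-point equation $\rho/\log\rho=1/(z\log 2)$ has a unique analytic solution with $|\rho|\to\infty$ (which follows by a bootstrap $\rho=X\log X + X\log\log X+\cdots$, with $X=1/(z\log 2)$, yielding the more explicit form \eqref{Fx-small}); second, checking that the tilted contour stays inside $|\arg s|\le\pi-\varepsilon$ so that the Mellin asymptotic \eqref{Q2s-asymp} applies uniformly along the whole path; and third, controlling the oscillatory factor $P(\log_2 s)$, whose Fourier representation \eqref{Pt} has amplitude below $2\times 10^{-12}$ and can thus be treated as an $\mathcal{O}(1)$ slowly varying factor that is simply evaluated at the saddle point. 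Once these uniformities are in place, the standard saddle-point template yields \eqref{saddle-point-Fr-A} directly.
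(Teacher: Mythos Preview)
Your proposal follows essentially the same route as the paper's proof: inverse Laplace representation, Mellin expansion \eqref{Q2s-asymp} of $\log Q(-2s)$, shift to $\Re(s)=\rho$ with the saddle-point equation $\rho/\log\rho=1/(z\log2)$, Gaussian analysis near the saddle, and for complex $z$ a tilted contour to ensure decay of $e^{zs}$ at infinity. One minor slip: your displayed identity $z\rho-\tfrac1{2\log2}(\log\rho)^2=\tfrac1{2\log2}(\log\rho)^2+\mathcal{O}(1)$ is wrong (since $z\rho=\log\rho/\log2$ the right-hand side should be $\tfrac{\log\rho}{\log2}-\tfrac{(\log\rho)^2}{2\log2}$, which is exactly what produces the factor $\rho^{1/\log2}e^{-(\log\rho)^2/(2\log2)}$ in \eqref{saddle-point-Fr-A}), but this does not affect the method.
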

\begin{proof}
Recall that
\[
    Q(s)
    := \prod_{j\ge1}\lpa{1-2^{-j}s}
	\quad\text{and}\quad
    Q_n
    := \prod_{1\le j\le n}\lpa{1-2^{-j}}
	= \frac{Q(1)}{Q(2^{-n})}.
\]
Also
\[
    F(z)
    := \sum_{j\ge0}\frac{(-1)^j2^{-\binom{j}{2}}}
	{Q_jQ_\infty}\, e^{-2^jz}.
\]
Since the Laplace transform $\mathscr{L}[F(z);s]$ of $F$ is given by
\begin{align}\label{Fz-inv-L}
    \mathscr{L}[F(z);s]
	= \prod_{j\ge0}\frac1{1+2^{-j}s}
	= \frac1{Q(-2s)}\qquad(\Re(s)>-1),
\end{align}
we have the Laplace inversion formula
\begin{equation}\label{laplace-inv-form}
    F(z)
    = \frac1{2\pi i}
	\int_{1-i\infty}^{1+i\infty}
	\frac{e^{zs}}{Q(-2s)}\,\dd s,
\end{equation}
which is valid for $z= x$ where $x>0$ is real. We are interested in
the asymptotics of $F(x)$ as $x\to 0$, which is reflected by the
large-$|s|$ asymptotics of $\mathscr{L}[F(z);s]$. Our approach relies
on the Mellin transform techniques and the saddle-point method; see
the survey paper \cite{FlGoDu} for more background tools and
applications on Mellin transform. In particular, taking logarithm on
both sides of \eqref{Fz-inv-L} (assuming that $1+2^{-j}s\ne0$), we
begin with the Mellin integral representation
\begin{align*}
    \log Q(-2s)
    = \sum_{j\ge 0}\log(1+2^{-j}s)
    = \frac{1}{2\pi i}
	\int_{-\frac12-i\infty}^{-\frac12+i\infty}
    \frac{\pi\, s^{-w}}{(1-2^{w})
    w\sin\pi w}\dd w,
\end{align*}
because the Mellin transform of $\log(1+s)$ equals
\[
    \int_{0}^{\infty}s^{w-1}\log(1+s)\dd{s}
    = \frac{\pi}{w\sin\pi w},
    \qquad (\Re(w)\in(-1,0)).
\]
Note that if $w= u+iv$ and $s= |s|e^{ib}$ with $u,v,b$ real and
$|b|\le \pi-\varepsilon$, then
\[
    \left|\frac{\pi s^{-w}}
    {(1-2^w) w\sin \pi w} \right|
	= \mathcal{O}\left(\frac{|s|^{-u}
    e^{-|v|(\pi-|b|)}}{|1-2^w||w|}\right),
\]
provided that $|w|$ stays away from the zeros of the denominator.
Thus by standard arguments, we deduce that (with $\beta :=
\frac1{2\log 2}$)
\begin{align}\label{Q2s-asymp-A}
    \log Q(-2s)
    = \beta (\log s)^2+\frac{\log s}{2}
    +P(\log_2 s)+J(s),
\end{align}
when $|\arg(s)|\le \pi-\ve$, where the periodic function $P(u)$ has
the Fourier series representation
\begin{equation}\label{Pt-A}
	\begin{split}
    P(u)
	&= \frac{\log 2}{12} +\frac{\pi^2}{6\log 2}
	- \sum_{j\ge1}
	\frac{\cos(2j\pi u)}
	{j\sinh\frac{2j\pi^2}{\log 2}},
	\end{split}
\end{equation}
which also defines an analytic function as long as
$|\Im(u)|<\pi/(\log 2)$; see Figure~\ref{Pu}. Here the remainder
$J(s)$ satisfies
\begin{align*}
	J(s) &= \frac{1}{2\pi i}
	\int_{\frac12-i\infty}^{\frac12+i\infty}
    \frac{\pi\, s^{-w}}{(1-2^{w})
    w\sin\pi w}\dd w\\
	&= \frac{1}{2\pi i}
	\int_{-\frac12-i\infty}^{-\frac12+i\infty}
    \frac{\pi\, s^{w}}{(1-2^{-w})
    w\sin\pi w}\dd w\\
	&= -\frac{1}{2\pi i}
	\int_{-\frac12-i\infty}^{-\frac12+i\infty}
    \frac{\pi\, (2s)^{w}}{(1-2^{w})
    w\sin\pi w}\dd w\\
	&= -\log Q\left(-\frac1{s}\right).
\end{align*}
\begin{figure}[!ht]
	\begin{center}
		\includegraphics[height= 3cm]{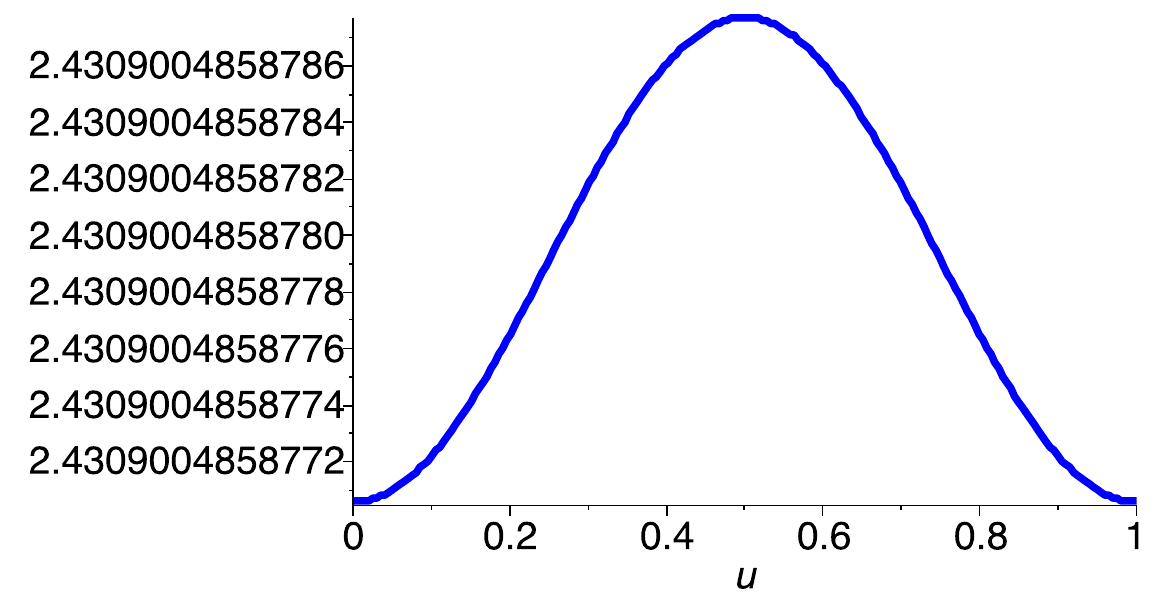}\;\;
		\includegraphics[height= 3cm]{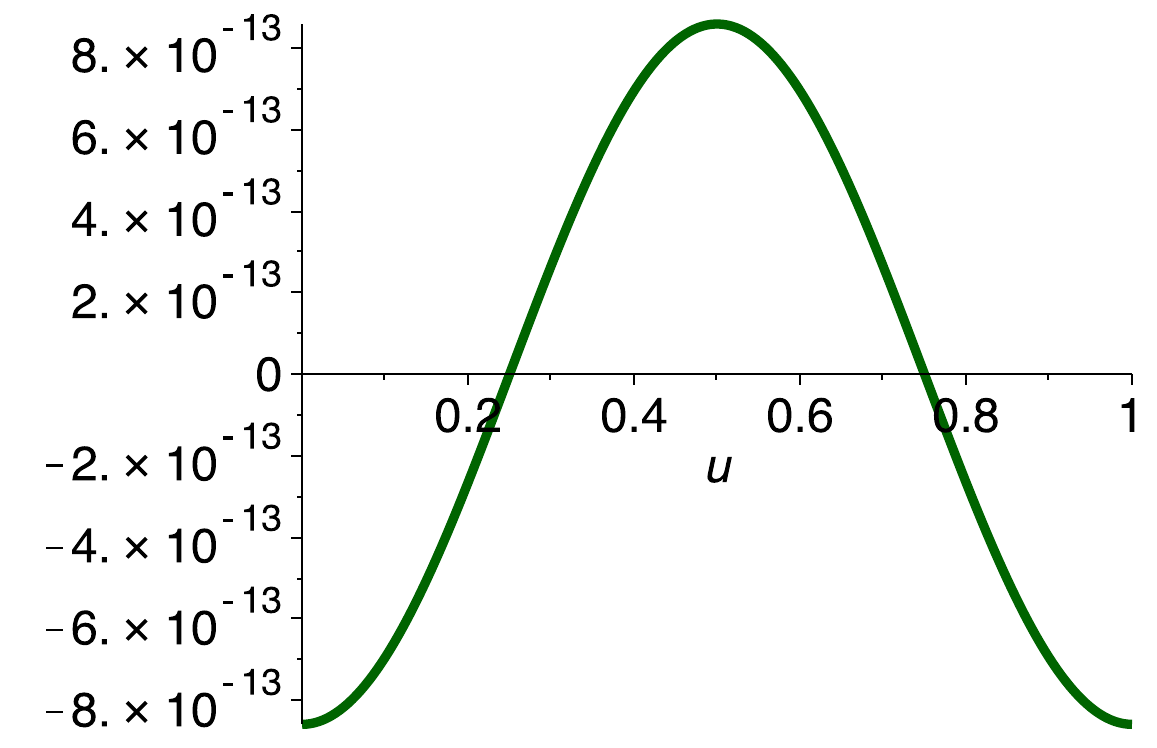}
	\end{center}
	\caption{\emph{$P(u)$ in the unit interval (left) and
	the fluctuating part of $P(u)$ (right).}}\label{Pu}
\end{figure}
We thus have the \emph{identity}:
\[
    Q(-2s)
	= \frac{\sqrt{s}\,e^{\beta(\log s)^2+P(\log_2s)}}
	{Q\lpa{-\frac1s}},
\]
or
\[
    \boxed{\prod_{j\ge0}\left(1+\frac{s}{2^j}\right)
    = \sqrt{s}\,e^{\beta(\log s)^2+P(\log_2s)}
    \prod_{j\ge1}\frac1{1+\frac{1}{2^js}}},
\]
which indeed holds, by analytic continuation, as long as
$s\in\mathbb{C}\setminus(-\infty,0]$. In particular, for large $|s|$
with $|\arg(s)|\le \pi-\ve$,
\[
    J(s)
    = -\frac1{s}+\frac{1}{6s^2}-\frac1{21s^3}
	+\frac1{60s^4}+\mathcal{O}\lpa{|s|^{-5}}.
\]

It follows, by substituting the asymptotic approximation
\eqref{Q2s-asymp}, that
\begin{align*}
    F(x)
    &= \frac1{2\pi i}\int_{1-i\infty}^{1+i\infty}
    s^{-\frac12}e^{xs-\beta(\log s)^2-P(\log_2s)}
	\left(1+\mathcal{O}\lpa{\vert s\vert^{-1}}\right)\dd s.
\end{align*}

Now, the asymptotics of $F(x)$ as $x\to 0$ is obtained by a standard
application of the saddle-point method. Therefore, we move the line
of integration to $\Re(s)= \rho$, where $\rho>0$ solves the
saddle-point equation $\frac{\log \rho}{\rho} = \frac{x}{2\beta}$.
Note that this does not change the value of the integral which is
either clear from the domain of the Laplace transform of $f(z)$ or
can also be seen directly since the integrand over the horizontal
line segments of distance $T\gg1$ from the positive real axis (and
contained in a cone with $\vert\arg(s)\vert\le \pi-\ve$) is bounded
above by
\[
	T^{-\frac12}e^{x\Re(s)-\beta(\log T)^2},
\]
implying that the integral along such lines is of order
\[
    T^{-\frac12}\exp(-\beta(\log T)^2),
\]
which decays to $0$ as $T$ tends to infinity. Thus, (with
$s\mapsto\rho(1+it)$)
\begin{align*}
    F(x) = \frac{\rho^{\frac12}e^{\rho x}}{2\pi}
	\int_{-\infty}^{\infty}
	\frac{e^{i\rho tx-\beta(\log(\rho(1+it)))^2
	-P(\log_2(\rho(1+it)))}}{\sqrt{1+it}}
	\Lpa{1+\mathcal{O}\Lpa{\frac1{\rho|1+it|}}}\dd t.
\end{align*}
By a direct iterative argument, we obtain, with $R :=
\frac{2\beta}{x}$,
\begin{align*}
    \rho
    = R\Bigg(\log R+\log\log R+\frac{\log\log R}{\log R}
    -\frac{(\log\log R)^2-2\log\log R}
    {2(\log R)^2}+\mathcal{O}
    \left(\frac{|\log\log R|^3}{|\log R|^3}\right)\Bigg).
\end{align*}
Then we split the integral into two parts:
\[
    F(x)
	= \frac{\rho^{\frac12}e^{\rho x}}{2\pi}
	\left(\int_{|t|\le t_0}+
	\int_{|t|>t_0}\right)
	\frac{e^{i\rho tx-\beta(\log(\rho(1+it)))^2
	-P(\log_2(\rho(1+it)))}}{\sqrt{1+it}}
	\Lpa{1+\mathcal{O}\Lpa{\frac1{\rho|1+it|}}}
    \dd t,
\]
where $t_0 = (\log \rho)^{-\frac25}$. Since
\begin{align*}
    \Re((\log(\rho(1+it)))^2)
	&= (\log \rho)^2+(\log\rho)\log(1+t^2)
	+\tfrac14(\log(1+t^2))^2-\arctan(t)^2
\end{align*}
is a monotonic function of $|t|$ for fixed large $\rho$, we have
\begin{align*}
	&\int_{|t|>t_0}\frac{e^{i\rho tx-\beta(\log(\rho(1+it)))^2
	-P(\log_2(\rho(1+it)))}}{\sqrt{1+it}}\dd t\\
	&\qquad = \mathcal{O}\left(e^{-\beta(\log\rho)^2}
	\int_{\log(1+t_0^2)}^\infty
	w^{-\frac12}e^{-\beta(w\log\rho
	+\frac14w^2)+w}\dd w \right)\\
	&\qquad= \mathcal{O}\left(e^{-\beta(\log\rho)^2
    -\ve(\log \rho)^{\frac15}}\right),
\end{align*}
for some $\ve>0$. Now by the local expansions
\begin{align*}
    &i\rho tx-\beta(\log(\rho(1+it)))^2\\
    &\qquad = -\beta(\log \rho)^2
	-\beta(\log\rho-1)t^2
    +\frac{\beta}{3}(2\log\rho-3)it^3
	+\mathcal{O}\lpa{t^4\log \rho},
\end{align*}
and
\[	
	e^{-P(\log_2(\rho(1+it)))}
	= e^{-P(\log_2\rho)}\left(1-\frac{P'(\log_2\rho)}
	{\log 2}\,it+\mathcal{O}(|t|^2)\right),
\]
for $|t|\le t_0$, we deduce that the integral with $|t|\le t_0$ is
asymptotic to
\begin{align*}
    F(x)&= \frac{\rho^{\frac12}e^{\rho x}}{2\pi}
	\int_{|t|\le t_0}
	\frac{e^{i\rho tx-\beta(\log(\rho(1+it)))^2
	-P(\log_2(\rho(1+it)))}}{\sqrt{1+it}}
	\Lpa{1+\mathcal{O}\Lpa{\frac1{\rho|1+it|}}} \dd t \\
    &= \frac{\rho^{\frac12}
    e^{\rho x-\beta(\log \rho)^2-P(\log_2\rho)}}
    {2\sqrt{\pi\beta\log\rho}}
    \left(1+\mathcal{O}\left((\log\rho)^{-1}\right)\right).
\end{align*}
Similarly, we also have
\begin{align*}
    F^{(m)}(x)
    &= \frac{\rho^{m+\frac12}
    e^{\rho x-\beta(\log \rho)^2-P(\log_2\rho)}}
    {2\sqrt{\pi\beta\log\rho}}
    \left(1+\mathcal{O}\left(m^2(\log\rho)^{-1}\right)\right),
\end{align*}
uniformly as $x\to0$ and $m=o(\sqrt{\log\rho})$.

\vspace*{0.2cm}
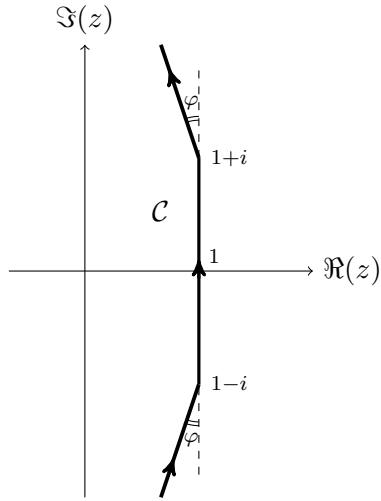
\begin{figure}[!ht]
\begin{center}
\begin{tikzpicture}
\draw[->] (0cm,0cm) -- (4cm,0cm) node[right] {$\Re(z)$};
\draw[->] (1cm,-3cm) -- (1cm,3cm) node[above] {$\Im(z)$};
\draw[line width= 0.042cm] (2.5cm,-1.5cm) -- (2.5cm,1.5cm);
\draw[line width= 0.05cm] (2.5cm,1.5cm) -- (2cm,3cm);
\draw[line width= 0.05cm] (2cm,-3cm) -- (2.5cm,-1.5cm);
\draw[thin,dashed] (2.5cm,1.5cm) -- (2.5cm,2.7cm);
\draw[thin,dashed] (2.5cm,-2.7cm) -- (2.5cm,-1.5cm);
\draw[domain= 90:110] plot ({2.5cm+0.5cm*cos(\x)},{1.5cm+0.5cm*sin(\x)});
\draw[domain= 90:111] plot ({2.5cm+0.55cm*cos(\x)},{1.5cm+0.55cm*sin(\x)});
\draw[domain= -90:-110] plot ({2.5cm+0.5cm*cos(\x)},{-1.5cm+0.5cm*sin(\x)});
\draw[domain= -90:-111] plot ({2.5cm+0.55cm*cos(\x)},{-1.5cm+0.55cm*sin(\x)});
\draw[line width= 0.042cm,->,>= stealth'] (2.5cm,-0.1cm) -- (2.5cm,0.15cm);
\draw[line width= 0.042cm,->,>= stealth'] (2.14cm,2.58cm) -- (2.11cm,2.67cm);
\draw[line width= 0.042cm,->,>= stealth'] (2.11cm,-2.67cm) -- (2.17cm,-2.49cm);

\draw (2.7cm,0.2cm) node (1) {$\scriptstyle 1$};
\draw (2.9cm,1.5cm) node (2) {$\scriptstyle 1+i$};
\draw (2.9cm,-1.5cm) node (3) {$\scriptstyle 1-i$};
\draw (2.4cm,2.23cm) node (4) {$\scriptstyle \varphi$};
\draw (2.4cm,-2.23cm) node (5) {$\scriptstyle \varphi$};
\draw (2cm,0.8cm) node (6) {$\mathcal{C}$};
\end{tikzpicture}
\end{center}
\caption{The contour of integration in the integral
representation of $F(z)$ when $z$ is complex.}\label{contour}
\end{figure}
\vspace*{0.1cm}

We now look at the situation when $z= xe^{i\theta}$ with $\theta\ne0$
and $|\theta|\le \ve$. Here, \eqref{laplace-inv-form} is no longer
valid since the integral diverges. However, by the same idea of
the Hankel contour used for extending the Gamma function, we can
deform the original integration line into the following one:
\begin{align*}
	\mathcal{C}
	&:= \{z= 1-i+e^{-i(\frac\pi2+\varphi)}u\, :\, u\ge 0\}\\
	&\qquad\cup\{z= 1+iu\, :\, -1<u<1\}
	\cup\{z= 1+i+e^{i(\frac\pi2+\varphi)}u\, :\, u\ge 0\},
\end{align*}
where $\ve<\varphi$; see Figure~\ref{contour}. Then, we use \eqref{Q2s-asymp} and make
the substitution
\begin{align*}
    F(z)
    &= \frac1{2\pi i}\int_{\mathcal{C}}
    s^{-\frac12}e^{xe^{i\theta} s-\beta(\log s)^2-P(\log_2s)}
	\left(1+\mathcal{O}\lpa{\vert s\vert^{-1}}\right)\dd s\\
	&= \frac{e^{-\frac{1}{2}i\theta}}{2\pi i}
    \int_{e^{i\theta}\mathcal{C}}
    s^{-\frac12}e^{xs-\beta(\log s-i\theta)^2
	-P\lpa{\log_2(se^{-i\theta})}}
	\left(1+\mathcal{O}\lpa{|s|^{-1}}\right)\dd s,
\end{align*}
where $e^{i\theta}\mathcal{C}$ denotes the image of $\mathcal{C}$
under the mapping $s\mapsto e^{i\theta}s$. Note that the solution to
the saddle-point equation
\[
    \frac{\log \rho(z)}{\rho(z)}
	= \frac{z}{2\beta} = \frac{e^{i\theta}}{x}
\]
where $R := \frac{2\beta}{x}$, satisfies asymptotically for small $x$
\[
    \rho(z) = Re^{-i\theta}
	\bigg(\log R+\log\log R-i\theta
	+\frac{\log\log R-i\theta}{\log R}
    +\mathcal{O}\left(\frac{|\log\log R|^2}
	{|\log R|^2}\right)\bigg).
\]
In particular ($\rho = \rho(|z|)$),
\begin{align}\label{rho-z}
    \rho(z) = \rho\,e^{-i\theta}
	\left(1-\frac{i\theta}{\log R}
	+\frac{(\log \log R-1)i\theta}{(\log R)^2}
	+\mathcal{O}\left(\frac{|\log\log R|^2}{|\log R|^3}\right)\right).
\end{align}
Since $|\theta|\le \ve$, we now deform the integration contour again
into the vertical line $\Re(s)= \rho$ (which again does not change the
value of the integral as can be seen by a similar argument as above)
and proceed as before:
\begin{align}\label{Fz-complex}
	F(z) &=
	\frac{e^{-\frac12i\theta}}{2\pi i}
	\left(\int_{\substack{s= \rho\cdot(1+it)\\
	|t|\le t_0}}+\int_{\substack{s= \rho\cdot(1+it)\\
	|t|>t_0}}\right) \nonumber\\
	&\qquad s^{-\frac12}e^{xs-\beta(\log s-i\theta)^2
	-P(\log_2s-i\theta/(\log 2))}
    \left(1+\mathcal{O}\lpa{|s|^{-1}}\right)\dd s.
\end{align}
By the local expansion
\begin{align*}
    x\rho it-\beta(\log(\rho\cdot(1+it))-i\theta)^2
    &= -\beta(\log\rho-i\theta)^2-2\beta\theta t
    -\beta(\log\rho-1-i\theta)t^2\\
    &\quad+\frac\beta3(2\log\rho-3-2i\theta)it^3+
    \mathcal{O}\lpa{(\log\rho)t^4},
\end{align*}
and the relations $(a\in\mathbb{R}, b>0)$
\begin{align*}
    \frac1{2\pi}\int_{-\infty}^{\infty}t^m
    e^{-at-bt^2}\dd t
    = \frac{(-1)^m e^{\frac{a^2}{4b}}}{2^{m+1}\sqrt{\pi b}}
	\sum_{0\le \ell\le \lfloor{\frac12m}\rfloor}
	\frac{m! a^{m-2\ell}}
	{\ell!(m-2\ell)!b^{m-\ell}}
	\qquad(m= 0,1,\dots),
\end{align*}
we deduce that the first integral on the right-hand side of \eqref{Fz-complex} is
asymptotic to
\begin{align*}
    &\frac{\rho^{\frac12}
    e^{-\frac12 i\theta-P\lpa{\log_2(\rho e^{-i\theta})}
    +x\rho-\beta(\log\rho-i\theta)^2-
    \frac{\beta^2\theta^2}{\log\rho-1-i\theta}}}
    {2\sqrt{\pi\beta(\log\rho-1-i\theta)}}
    \left(1+\mathcal{O}\lpa{(\log\rho)^{-1}}\right)\\
    &\qquad = \frac{\rho^{\frac12}
    e^{-\frac12 i\theta-P\lpa{\log_2(\rho e^{-i\theta})}
    +x\rho-\beta(\log\rho-i\theta)^2}}
    {2\sqrt{\pi\beta\log\rho}}
    \left(1+\mathcal{O}\lpa{(\log\rho)^{-1}}\right).
\end{align*}
By \eqref{rho-z}, the right-hand side is asymptotic to
\[
    \frac{\rho(z)^{\frac12}e^{-P(\log_2\rho(z))
    +z\rho(z)-\beta(\log\rho(z))^2}}
    {2\sqrt{\pi\beta\log\rho(z)}}
    \left(1+\mathcal{O}\lpa{|\log\rho(z)|^{-1}}\right).
\]
It remains to prove the smallness of the other integral in
\eqref{Fz-complex}, which is bounded above by
\begin{align*}
    &\int_{\substack{s= \rho\cdot(1+it)\\
	|t|>t_0}} s^{-\frac12}e^{xs-\beta(\log s-i\theta)^2
	-P(\log_2(se^{-i\theta}))}\dd s\\
    &\qquad = \mathcal{O}\left(\rho^{\frac12}e^{x\rho}
    \int_{t_0}^\infty (1+t^2)^{-\frac14}
    e^{-\beta((\log\rho+\frac12\log(1+t^2))^2-
    (\theta-\arctan(t))^2)}\dd t\right).
\end{align*}
The factor $(\theta-\arctan(t))^2$ being bounded for $t$ in the range
of integration, we obtain
\[
    \mathcal{O}\left(\rho^{\frac12}e^{x\rho}\int_{t_0}^\infty
    (1+t^2)^{-\frac14}
    e^{-\beta(\log\rho+\frac12\log(1+t^2))^2}\dd t\right)
    = \mathcal{O}\left(\rho^{\frac12}e^{x\rho-\beta(\log\rho)^2
    -\ve(\log\rho)^{\frac15}} \right),
\]
which, by \eqref{rho-z}, is majorized by
\[
    \mathcal{O}\left(|\rho(z)|^{\frac12}e^{\Re(z\rho(z)
    -\beta(\log\rho(z))^2)
    -\ve|\log\rho(z)|^{\frac15}} \right).
\]
We thus obtain the approximation
\[
    F(z) = \frac{\rho(z)^{\frac12}
    e^{z\rho(z) -\beta(\log \rho(z))^2-P(\log_2\rho(z))}}
    {2\sqrt{\pi\beta\log\rho(z)}}
    \left(1+\mathcal{O}\left(|\log\rho(z)|^{-1}\right)\right),
\]
uniformly as $|z|\to0$ in the sector $|\arg(z)|\le \ve$. The proof
for the $m$th derivative of $F(z)$ is similar as above.
\end{proof}

\section{Appendix B: Proof of \eqref{lap-G-1}}
\label{A:B}

It suffices to prove \eqref{lap-G-1} for $m=0$ because the general
statement follows by well-known properties of the Laplace transform.

First, by straightforward bounds, we have $G(x)= \mathcal{O}(1)$ for
$x\ge 0$, and thus the Laplace transform $\mathscr{L}[G(x);s]$ exists
for $\Re(s)>0$. Moreover, for the computation of this Laplace
transform, we can interchange the integral and the series, and obtain
\[
	\mathscr{L}[G(x);s]
	= \sum_{j,r\ge 0}\sum_{0\le h,\ell\le j}
	\frac{(-1)^{r+h+\ell}2^{-j-\binom{r}{2}
	-\binom{h}{2}-\binom{\ell}{2}+2h+2\ell}}
	{Q_{\infty}Q_rQ_hQ_{j-h}Q_{\ell}Q_{j-\ell}}
	\cdot\frac{1}{(s+2^h+2^{\ell})^2(s+2^{j+r})}.
\]
Now, as in the proof of Proposition~\ref{Th5},
\[
	\sum_{r\ge 0}\frac{(-1)^r2^{-\binom{r}{2}}}
	{Q_{\infty}Q_r(s+2^{j+r})}
	= \frac{1}{2^{j}Q(-2^{1-j}s)}.
\]
Plugging this into the above expression gives
\begin{equation}\label{simp-1}
	\mathscr{L}[G(x);s]
	= \sum_{j\ge 0}\frac{1}{4^{j}Q(-2^{1-j}s)}
	\sum_{0\le h,\ell\le  j}
	\frac{(-1)^{h+\ell}2^{-\binom{h}{2}
	-\binom{\ell}{2}+2h+2\ell}}
	{Q_hQ_{j-h}Q_{\ell}Q_{j-\ell}}
	\cdot\frac{1}{(s+2^h+2^{\ell})^2}.
\end{equation}
Next, by differentiating \eqref{poi-mean} twice,
\[
	\tilde{M}''_{k,1}(z)
	= 2^{-k}\sum_{0\le \ell\le k}
	\frac{(-1)^{\ell}2^{-\binom{\ell}{2}+2\ell}}
	{Q_{\ell}Q_{k-\ell}}e^{-2^{\ell-k}z}
\]
and thus
\begin{align*}
	\tilde{g}_j^{*}(s)
	= \mathscr{L}[z(\tilde{M}''_{j,1}(z))^2;s]
	&= 4^{-j}\sum_{0\le h,\ell\le  j}
	\frac{(-1)^{h+\ell}2^{-\binom{h}{2}
	-\binom{\ell}{2}+2h+2\ell}}
	{Q_{h}Q_{j-h}Q_{\ell}Q_{j-\ell}}
	\cdot\frac{1}{(s+2^{h-j}+2^{\ell-j})^2}\\
	&= \sum_{0\le h,\ell\le  j}
	\frac{(-1)^{h+\ell}
	2^{-\binom{h}{2}-\binom{\ell}{2}+2h+2\ell}}
	{Q_{h}Q_{j-h}Q_{\ell}Q_{j-\ell}}
	\cdot\frac{1}{(2^j s+2^{h}+2^{\ell})^2}.
\end{align*}
Finally, substituting this into \eqref{simp-1} gives
\[
	\mathscr{L}[G(x);s]
	= \sum_{j\ge 0}4^{-j}
	\frac{\tilde{g}_j^{*}(2^{-j}s)}
	{Q(-2^{1-j}s)}\qquad (\Re(s)>0),
\]
as claimed.
\end{document}